\documentclass[12pt,a4paper]{article}

\usepackage[font=small,margin=2cm]{caption}

\usepackage{authblk}
\usepackage[margin=1.8cm,bottom=2cm]{geometry}
\usepackage{t1enc}
\usepackage[utf8]{inputenc}
\usepackage{amsmath,amsthm,amssymb}
\usepackage{graphicx}
\usepackage{enumerate}
\usepackage{hyperref}
\usepackage{bm}
\usepackage{comment}
\usepackage{amsfonts}
\usepackage{graphicx,caption}
\usepackage{bm}
\usepackage{amsmath, amsthm, amssymb}
\usepackage{graphicx}
\usepackage{hyperref}
\usepackage{relsize}
\usepackage{blkarray}
\usepackage{tabstackengine}
\stackMath

\usepackage[table]{xcolor}

\usepackage{algpseudocode}

\usepackage{bbm}

\theoremstyle{plain}
\usepackage{amsthm}
\makeatletter
\newcommand{\newreptheorem}[2]{\newtheorem*{rep@#1}{\rep@title}\newenvironment{rep#1}[1]{\def\rep@title{#2 \ref*{##1}}\begin{rep@#1}}{\end{rep@#1}}}
\makeatother

\newtheorem{theorem}{Theorem}
\newtheorem*{theorem-non}{Theorem}
\newtheorem*{non-lemma}{Lemma}
\newtheorem{lemma}[theorem]{Lemma}
\newreptheorem{lemma}{Lemma}

\newtheorem{corr}[theorem]{Corollary}

\newtheorem{conjecture}[theorem]{Conjecture}

\theoremstyle{definition}
\newtheorem{remark}[theorem]{Remark}


\DeclareMathOperator{\range}{Im }

\DeclareMathOperator{\rang}{rank}

\DeclareMathOperator{\cok}{cok}

\DeclareMathOperator{\supp}{supp}

\DeclareMathOperator{\Aut}{Aut}

\DeclareMathOperator{\Var}{Var}

\DeclareMathOperator{\dTV}{d_{TV}}

\begin{document}

\title{The rank evolution of \\block bidiagonal matrices\\ over finite fields}
\author{Andr\'as M\'esz\'aros\thanks{{\tt meszaros@renyi.hu}\\The author was supported by the NKKP-STARTING 150955 project and the Marie Sk\l{}odowska-Curie Postdoctoral Fellowship "RaCoCoLe".}}
\date{}
\affil{HUN-REN Alfr\'ed R\'enyi Institute of Mathematics,\\ Budapest, Hungary}
\maketitle
\begin{abstract}

\bigskip

We investigate uniform random block lower bidiagonal matrices over the finite field $\mathbb{F}_q$, and prove that their rank undergoes a phase transition.

First, we consider block lower bidiagonal matrices with $(k_n+1)\times k_n$ blocks where each block is of size $n\times n$. We prove that if $k_n\ll q^{n/2}$, then these matrices have full rank with high probability, and if $k_n\gg q^{n/2}$, then the rank has Gaussian fluctuations.

Second, we consider block lower bidiagonal matrices with $k_n\times k_n$ blocks where each block is of size $n\times n$. We prove that if $k_n\ll q^{n/2}$, then the rank exhibits the same constant order fluctuations as the rank of the matrix products considered by Nguyen and Van Peski, and if $k_n\gg q^{n/2}$, then the rank has Gaussian fluctuations.

Finally, we also consider a truncated version of the first model, where we prove that at $k_n\approx q^{n/2}$, we have a phase transition between a Cohen-Lenstra and a Gaussian limiting behavior of the rank. We also show that there is a localization/delocalization phase transition for the vectors in the kernels of these matrices at the same critical point.

In all three cases, we also provide a precise description of the behavior of the rank at criticality.

These results are proved by analyzing the limiting behavior of a Markov chain obtained from the increments of the ranks of these matrices.

\bigskip

\noindent\textbf{Mathematics Subject Classification:} 15B52, 60B20
\end{abstract}

\newpage

\section{Introduction}

Let $A=\left({A}_{i,j}\right)_{i,j=1}^\infty$ be a random infinite block matrix over the finite field $\mathbb{F}_q$ such that all the blocks are $n\times n$ matrices, and $A$ is a block lower bidiagonal matrix, that is, $A_{i,j}=0$ whenever $j \notin \{i,i-1\}$. Moreover, the other blocks $(A_{i,j})_{j\in \{i,i-1\}}$ are chosen independently uniformly at random from the set of all $n\times n$ matrices over $\mathbb{F}_q$. 

For $k\ge 1$, let $C_{2k-1}$ be the submatrix of $A$ determined by the first $kn$ rows and $kn$ columns, that is, 

\[
\setstackgap{L}{2\baselineskip}
\fixTABwidth{T}
C_{2k-1}=\parenMatrixstack{
{A}_{1,1}&{0}&\ddots&&&&\\
{A_{2,1}}&{A}_{2,2}&{0}&\ddots&&&\\
{0}&{A_{3,2}}&{A}_{3,3}&{0}&\ddots&&\\
\ddots&{0}&{A_{4,3}}&{A}_{4,4}&{0}&\ddots&&\\
&\ddots&\ddots&\ddots&\ddots&\ddots&\ddots\\
& &\ddots&{0}&\,{A_{k-1,k-2}}\,&{A}_{k-1,k-1}&0\\
& &&\ddots&{0}&{A_{k,k-1}}&{A}_{k,k}
}.\]

\bigskip

Furthermore, let $C_{2k}$ be the submatrix of $A$ determined by the first $(k+1)n$ rows and $kn$ columns, that is,

\[
\setstackgap{L}{2\baselineskip}
\fixTABwidth{T}
C_{2k}=\parenMatrixstack{
{A}_{1,1}&{0}&\ddots&&&&\\
{A_{2,1}}&{A}_{2,2}&{0}&\ddots&&&\\
{0}&{A_{3,2}}&{A}_{3,3}&{0}&\ddots&&\\
\ddots&{0}&{A_{4,3}}&{A}_{4,4}&{0}&\ddots&&\\
&\ddots&\ddots&\ddots&\ddots&\ddots&\ddots\\
& &\ddots&{0}&\,{A_{k-1,k-2}}\,&{A}_{k-1,k-1}&0\\
& &&\ddots&{0}&{A_{k,k-1}}&{A}_{k,k}\\
& &&&\ddots&{0}&{A_{k+1,k}}
}.\]

\bigskip

Let $X_0=0$ and for $i>0$, let
\begin{equation}\label{rankincrementdef}X_i=\rang(C_i)-\rang(C_{i-1}),\end{equation}
where $\rang(C_0)$ is defined to be $0$.

Our results rely on the fact that $X_i$ is a Markov chain with explicitly given transition probabilities, as the following lemma shows.

\newpage

\begin{lemma}\label{lemmaMarkov}
The sequence $(X_i)$ is a time homogeneous irreducible aperiodic reversible Markov chain on the state space $\{0,1,\dots,n\}$ with transition matrix
\begin{equation}\label{lemmaMarkoveq}P_n(d,r)=\mathbbm{1}(d+r\le n)q^{-(n-d-r)(n-r)} \frac{(q^{-(n-d)};q)_r (q^{-n};q)_r}{(q^{-r};q)_r},
\end{equation}
where we used the $q$-Pochhammer symbol $(a;q)_r=\prod_{i=0}^{r-1} (1-aq^i)$.

Moreover, the stationary measure $\pi_n$ of this Markov chain is given by

\[\pi_n(h)=\frac{1}{\mathcal{Z}_n}q^{h(n-h)}\frac{(q^{-n};q)_h^2}{(q^{-h};q)_h}, \quad\text{ where }\quad\mathcal{Z}_n=\sum_{h=0}^n q^{h(n-h)}\frac{(q^{-n};q)_h^2}{(q^{-h};q)_h}.\]

\end{lemma}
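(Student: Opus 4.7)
The plan is to encode the matrix $C_i$ by a single ``boundary'' subspace $T_i\subseteq\mathbb{F}_q^n$ whose dimension coincides with $X_i$ and which itself evolves as a Markov chain driven by fresh, independent, uniform random $n\times n$ blocks.

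\textbf{State.} For odd $i=2k-1$ set
\[
T_i=\{v\in\mathbb{F}_q^n:(0,\dots,0,v)\in\mathrm{row}(C_{2k-1})\},
\]
the row space of $C_{2k-1}$ intersected with row vectors supported on the last $n$ coordinates; dually, for even $i=2k$ let $T_i=\{v\in\mathbb{F}_q^n:(0,\dots,0,v)^{T}\in\mathrm{col}(C_{2k})\}$, and put $T_0=\{0\}$. A block computation yields the key recursion
\[
T_{2k-1}=A_{k,k}^{T}\cdot T_{2k-2}^{\perp},\qquad T_{2k}=A_{k+1,k}\cdot T_{2k-1}^{\perp},
\]
with orthogonal complements taken in $\mathbb{F}_q^n$. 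This rests on one duality: for any matrix $M$ with codomain $\mathbb{F}_q^{N-n}\oplus\mathbb{F}_q^n$, the projection of the left kernel of $M$ onto the last $n$ coordinates equals $\bigl(\mathrm{col}(M)\cap(\{0\}\oplus\mathbb{F}_q^n)\bigr)^{\perp}$, with an analogous statement relating the right kernel and row-space intersections. Specialising $M$ to $C_{2k-2}$ or $C_{2k-1}$ converts the definition of $T_i$ into the image of a fresh random block applied to $T_{i-1}^{\perp}$.

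\textbf{Identification $X_i=\dim T_i$.} Writing $V=T_{i-1}^{\perp}$ and letting $B_i\in\{A_{k,k}^{T},A_{k+1,k}\}$ be the fresh block at step $i$, a short dimension chase gives
\[
\dim T_i=\dim(B_iV)=\dim V-\dim(V\cap\ker B_i)=\dim\bigl(\mathrm{col}(B_i)+T_{i-1}\bigr)-\dim T_{i-1},
\]
and the right-hand side is exactly $\rang(C_i)-\rang(C_{i-1})=X_i$ by the standard formula for the rank increment when augmenting a matrix by a new row or column block supported on the last $n$ rows or columns.

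\textbf{Markov property and transitions.} The blocks $B_i$ are independent and each is uniform on $\mathbb{F}_q^{n\times n}$, so their distributions are invariant under left multiplication by $\mathrm{GL}_n(\mathbb{F}_q)$. Hence for any fixed $V\subseteq\mathbb{F}_q^n$ the law of $B_iV$ depends on $V$ only through $\dim V$, and conditional on $\dim(B_iV)$ this image is uniformly distributed on subspaces of that dimension. By induction, $T_i$ conditional on $X_i=\dim T_i$ is uniform on $X_i$-dimensional subspaces, which forces $(X_i)$ to be Markov. Since $B_i|_V$ is a uniform random linear map $\mathbb{F}_q^{n-d}\to\mathbb{F}_q^n$ whenever $\dim V=n-d$, the conditional law of $X_i$ given $X_{i-1}=d$ coincides with the rank distribution of a uniform random $n\times(n-d)$ matrix, which a standard count using $|\mathrm{GL}_r(\mathbb{F}_q)|$ and the enumeration of rank-$r$ rectangular matrices simplifies to \eqref{lemmaMarkoveq}.

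\textbf{Irreducibility, aperiodicity, reversibility.} Positivity of $P_n(d,0)$ for every $d$ and of $P_n(0,0)$ gives irreducibility and aperiodicity at once. Reversibility reduces to the detailed-balance identity $\pi_n(d)P_n(d,r)=\pi_n(r)P_n(r,d)$, which after cancellation amounts to
\[
(q^{-n};q)_d\,(q^{-(n-d)};q)_r=(q^{-n};q)_r\,(q^{-(n-r)};q)_d,
\]
both sides being $\prod_{i=0}^{d+r-1}(1-q^{i-n})$. The main obstacle is the combinatorial setup in the first paragraph: the row/column duality must be handled consistently for both the square intermediate $C_{2k-1}$ and the rectangular ones $C_{2k-2},C_{2k}$, after which the remaining dimension identities and the closed-form transition probability follow from routine $q$-Pochhammer bookkeeping.
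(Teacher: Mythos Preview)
Your approach is essentially the same as the paper's at its core: both identify a ``boundary'' subspace (your $T_i$, the paper's $W$) carried by the last $n$ coordinates, show its dimension equals $X_i$, and deduce that $X_i$ given $X_{i-1}=d$ has the law of the rank of a uniform $n\times(n-d)$ matrix. Your packaging is somewhat different and in places cleaner: you make the recursion $T_i=B_iT_{i-1}^{\perp}$ explicit via a kernel/row-space duality, and you verify detailed balance directly (using $(q^{-n};q)_d(q^{-(n-d)};q)_r=(q^{-n};q)_{d+r}$), whereas the paper argues the transition step by an ad hoc rank--nullity computation and checks reversibility through Kolmogorov's cycle criterion before reading off $\pi_n$ from $\pi_n(h)=\pi_n(0)P_n(0,h)/P_n(h,0)$.

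One slip to fix: in your displayed identity the term $\mathrm{col}(B_i)$ should be $\mathrm{row}(B_i)$. Indeed $\dim(B_iV)=\dim V-\dim(V\cap\ker B_i)$ with $V=T_{i-1}^{\perp}$ and $\ker B_i=\mathrm{row}(B_i)^{\perp}$ gives $\dim(B_iV)=\dim(T_{i-1}+\mathrm{row}(B_i))-\dim T_{i-1}$; with $\mathrm{col}(B_i)$ the identity is false in general. This is harmless for the final conclusion, since for $i=2k$ one has $\mathrm{row}(B_i)=\mathrm{row}(A_{k+1,k})$, and for $i=2k+1$ one has $\mathrm{row}(B_i)=\mathrm{row}(A_{k+1,k+1}^{T})=\mathrm{col}(A_{k+1,k+1})$, which are exactly the spaces entering the rank increment $\rang(C_i)-\rang(C_{i-1})$; but the chain of equalities as written does not hold.
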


In the rest of the Introduction, we assume that $n=2m$ is even. See Section~\ref{oddn} for the necessary modifications in the case of an odd $n$.

We will gain understanding of the limiting behavior of $\dim\ker C_i$ by finding the appropriate limit of the Markov chains $(X_i)$. The limiting process is a continuous time random walk $Z_t$ on $\mathbb{Z}$ with transition rate matrix

\[Q_Z(a,b)=\begin{cases}
q^{-a}&\text{if }b=a+1,\\
q^a&\text{if }b=a-1,\\
-q^a-q^{-a}&\text{if }$a=b$,\\
0&\text{otherwise.}
\end{cases}\]

The initial distribution of $Z_0$ is to be specified later. If $Z_0=a$ for some deterministic constant~$a$, then sometimes we will write $Z_{a,t}$ in place of $Z_t$.

Note that $Z_t$ is more likely to move towards $0$ than away from $0$, and this bias is increasing with the distance from $0$.

Let $T_0,T_1,\dots$ be the jump times of $Z_t$, that is, $T_0=0$ and
\[T_i=\inf\{t>T_{i-1}\,:\,Z_t\neq Z_{T_{i-1}}\}.\]

Note that $Z_t$ has no explosions, that is, $T_i\to\infty$ almost surely, see Lemma~\ref{noexplosion}.

Let $\widehat{Z}_i$ be the embedded discrete time Markov chain of $Z_t$, that is, $\widehat{Z}_i=Z_{T_i}$.

As before, let $X_i$ be a Markov chain with transition probabilities given by $P_n$. The distribution of $X_0$ is to be specified later. If $X_0=a$ for some deterministic constant $a$, then sometimes we will write $X_{a,i}$ in place of $X_i$. Let us consider the following rescaled and recentered version $(Y_t)_{t\in \mathbb{R}_{\ge 0}}$ of $X_i$:
\[Y_t=X(2\lfloor q^{n/2}(q-1) t\rfloor)-n/2.\]

Recall that we assume that $n$ is even, so $Y_t$ is integer valued.

Looking at the transition probabilities, one can see that typically $X_i+X_{i+1}$ is very close to~$n$. Thus, if $X_i$ is far away from $\frac{n}2$, then there is a significant gap between $X_i$ and $X_{i+1}$. However, even in this case the values of $X_i$ and $X_{i+2}$ are close to each other. This is the reason why, in the definition of $Y_t$, we only consider $X_i$ at even indices.

Let $R_0,R_1,\dots$ be the jump times of $Y_t$, that is, $R_0=0$ and
\begin{equation}\label{Ridef}R_i=\inf\{t>R_{i-1}\,:\,Y_t\neq Y_{R_{i-1}}\}.\end{equation}
Finally, let $\widehat{Y}_i=Y_{R_i}$.

When we want to emphasize the dependence on $n$, we write $X_i^{(n)},Y_i^{(n)},\widehat{Y}_i^{(n)}, R_i^{(n)}$ and so on.

The next key theorem states that the processes $Y^{(n)}$ converge to $Z_t$.

\begin{theorem}\label{theoremcoupling}
Let $\alpha=0.08$. Assume that $Y^{(n)}_0=Z_0=a$, where $a$ is a deterministic constant satisfying $|a|\le 2\alpha n$. Then there is a coupling of the processes $Z_t$ and $Y_t=Y_t^{(n)}$ such that with probability at least $1-O\left(q^{-2\alpha n}\right)$ the following holds:
\begin{enumerate}[(a)]
\item For all $0\le i\le q^{2\alpha n}$, we have $\widehat{Y}_i=\widehat{Z}_i$;
\item For all $0\le i\le q^{2\alpha n}$, we have $|R_i-T_i|\le q^{-\alpha n}$;
\item $T_{\lfloor q^{2\alpha n} \rfloor}\ge q^{\alpha n}$.
\end{enumerate}
\end{theorem}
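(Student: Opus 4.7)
The argument will rest on a sharp asymptotic expansion of the two-step kernel $P_n^{(2)}$ in a window of radius $2\alpha n$ around $n/2$, together with a step-by-step coupling and a union bound over the $q^{2\alpha n}$ embedded steps. The rescaling by $q^{n/2}(q-1)$ and the factor $2$ in the definition of $Y_t$ are already foreshadowed in the text: a single $P_n$-step essentially reflects the state across $n/2$, while two $P_n$-steps amount to a $\pm 1$ (or $0$) perturbation whose rescaled rate should match the generator of $Z$.

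\textbf{Step 1: two-step kernel near $n/2$.} Starting from $X_i=n/2+a$ with $|a|\le 2\alpha n$, the factor $q^{-(n-d-r)(n-r)}$ in \eqref{lemmaMarkoveq} concentrates $X_{i+1}$ on the antidiagonal $r\le n-d=n/2-a$. Writing $r=n/2-a-\delta$ with $\delta\ge 0$ and reindexing the two $q$-Pochhammer symbols in the numerator, a direct simplification yields
\[
P_n\bigl(n/2+a,\,n/2-a-\delta\bigr) = \frac{q^{-\delta(n/2+a+\delta)}}{(q^{-1};q^{-1})_\delta}\bigl(1+O(q^{-(n/2-|a|)})\bigr).
\]
Composing two such steps and summing over the intermediate value (so that $X_{i+2}-X_i=\delta_1-\delta_2$) gives
\[
P_n^{(2)}\bigl(n/2+a,\,n/2+a\pm 1\bigr) = \frac{q^{\mp a}}{q^{n/2}(q-1)}\bigl(1+O(q^{-(n/2-C|a|)})\bigr)
\]
for an absolute constant $C$, while the total probability of any other destination is $O(q^{-n+C|a|})$. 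Dividing by the time increment $1/(q^{n/2}(q-1))$ recovers exactly the generator entries of $Z_t$.

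\textbf{Step 2: coupling and error budget.} Given $\widehat{Y}_i^{(n)}=\widehat{Z}_i=a$ with $|a|\le 2\alpha n+O(1)$, Step~1 shows that the TV distance between the next-state laws of $\widehat{Y}^{(n)}$ and $\widehat{Z}$ is $O(q^{-(n/2-C|a|)})$; coupling them maximally at each step via a single uniform random variable and union-bounding over $i\le q^{2\alpha n}$ yields part (a) with total failure probability $O(q^{2\alpha n}\cdot q^{-(n/2-2C\alpha n)})=O(q^{-2\alpha n})$, once the a priori confinement $|\widehat{Z}_i|\le 2\alpha n+O(1)$ is verified (routine, using the mean-reverting drift of $Z$ and its stationary measure $\propto q^{-a^2}$). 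For parts (b) and (c), I couple each holding time of $Y^{(n)}$ at state $a$ — a geometric number of two-step transitions with parameter $p_a\approx(q^a+q^{-a})/(q^{n/2}(q-1))$, each of duration $(q^{n/2}(q-1))^{-1}$ — to the exponential holding time of $Z$ of rate $q^a+q^{-a}$, using the identity that a geometric variable with parameter $p$ has the law of the ceiling of an exponential with rate $-\log(1-p)$. Per jump the discrepancy is $O\bigl((q^a+q^{-a})^{-1}q^{-(n/2-C|a|)}\bigr)+O\bigl((q^{n/2}(q-1))^{-1}\bigr)$, and Chernoff over the first $q^{2\alpha n}$ jumps delivers $|R_i-T_i|\le q^{-\alpha n}$ together with $T_{\lfloor q^{2\alpha n}\rfloor}\ge q^{\alpha n}$; non-explosion of $Z$ (Lemma~\ref{noexplosion}) lets me take $i\le q^{2\alpha n}$ without issue.

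\textbf{Main obstacle.} The technical bottleneck is the precision of the two-step kernel expansion as $|a|$ ranges up to $2\alpha n$: the factor $q^{|a|}$ buried inside the error $O(q^{-(n/2-C|a|)})$ forces an inequality of the form $\tfrac12-2\alpha(C+1)>0$ so that the union-bound total failure stays $O(q^{-2\alpha n})$, and it is this balance that pins down $\alpha$. The conservative choice $\alpha=0.08$ leaves ample room for a modest $C$ coming from the two-step composition. Once the expansion is nailed down carefully, the rest of the proof is routine: maximal coupling, Chernoff for sums of nearly-geometric variables, a union bound, and Lemma~\ref{noexplosion}.
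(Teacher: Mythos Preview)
Your proposal is correct and follows essentially the same route as the paper's proof (which goes through the slightly stronger Theorem~\ref{theoremcoupling0}): both expand the two-step dynamics near $n/2$, couple the embedded chains step by step with a union bound under an a priori confinement $|\widehat Z_i|\le 2\alpha n$, and couple geometric holding times to exponentials. The only differences are organizational---the paper tracks failures of the one-step reflection via auxiliary objects $S_i,\widetilde X_i$ (Lemma~\ref{skeletoncoupling}) rather than working directly with $P_n^{(2)}$, and it handles part~(c) via Lemma~\ref{noexplosion} plus Chebyshev rather than a Chernoff bound on holding times.
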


Theorem~\ref{theoremcoupling} provides a much stronger convergence result than just the convergence of finite dimensional marginals, which is given in the following corollary.
\begin{corr}\label{marginalconv}
 Let $0<t_1<t_2<\cdots<t_k$ be a deterministic sequence of times. Assume that ${Z}_0$ is constant, and assume that for all large enough $n$, we have $Y_0^{(n)}=Z_0$. Then
$\left(Y_{t_1}^{(n)},Y_{t_2}^{(n)},\dots,Y_{t_k}^{(n)}\right)$ converges to $\left(Z_{t_1},Z_{t_2},\dots,Z_{t_k}\right)$ in total variation distance.
\end{corr}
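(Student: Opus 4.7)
The plan is to invoke Theorem~\ref{theoremcoupling} to construct a single coupling under which, with probability tending to $1$, the equalities $Y^{(n)}_{t_i}=Z_{t_i}$ hold simultaneously for every $i=1,\dots,k$; the corollary then follows from the coupling characterization of total variation distance.

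Fix $n$ large enough that $t_k<q^{\alpha n}$, and let $E_n$ be the event on which all three conclusions (a)--(c) of Theorem~\ref{theoremcoupling} hold. On $E_n$, property (c) gives $T_{\lfloor q^{2\alpha n}\rfloor}\ge q^{\alpha n}>t_k$, so for each $i$ there is a unique index $j_i<\lfloor q^{2\alpha n}\rfloor$ with $T_{j_i}\le t_i<T_{j_i+1}$, and hence $Z_{t_i}=\widehat{Z}_{j_i}$. By (a) we have $\widehat{Y}_{j_i}=\widehat{Z}_{j_i}$, and by (b) each $R_j$ with $j\le \lfloor q^{2\alpha n}\rfloor$ is within $q^{-\alpha n}$ of $T_j$. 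Consequently, provided no such $T_j$ falls within $q^{-\alpha n}$ of any $t_i$, we obtain
\[
R_{j_i}\le T_{j_i}+q^{-\alpha n}<t_i<T_{j_i+1}-q^{-\alpha n}\le R_{j_i+1},
\]
which forces $Y_{t_i}^{(n)}=\widehat{Y}_{j_i}=\widehat{Z}_{j_i}=Z_{t_i}$.

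Let $F_n$ be the event that no jump time of $Z$ with index $j\le \lfloor q^{2\alpha n}\rfloor$ lands in the window $[t_i-q^{-\alpha n},t_i+q^{-\alpha n}]$ for any $i$. By Lemma~\ref{noexplosion}, $Z$ has only finitely many jumps in $[0,t_k+1]$ almost surely; moreover, conditional on the trajectory of the embedded chain $\widehat{Z}$, each inter-jump time is exponentially distributed, so each individual $T_j$ has an absolutely continuous distribution and in particular $\mathbb{P}(\exists j:T_j=t_i)=0$. Continuity from above then gives $\mathbb{P}(\exists j:|T_j-t_i|\le q^{-\alpha n})\to 0$ as $n\to\infty$, and a union bound over the $k$ fixed times yields $\mathbb{P}(F_n)\to 1$. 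Combined with $\mathbb{P}(E_n)\ge 1-O(q^{-2\alpha n})$, this gives $\mathbb{P}(E_n\cap F_n)\to 1$, and the standard coupling bound
\[
\dTV\!\left(\left(Y_{t_1}^{(n)},\dots,Y_{t_k}^{(n)}\right),\,(Z_{t_1},\dots,Z_{t_k})\right)\le 1-\mathbb{P}(E_n\cap F_n)
\]
completes the argument.

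The main delicate point, rather than any heavy calculation, is handling the possibility that a fixed $t_i$ lands in the narrow gap between a true jump time $T_j$ of $Z$ and the approximating jump time $R_j$ of $Y^{(n)}$: on such a gap the two processes disagree, despite the embedded-chain and jump-time agreement supplied by Theorem~\ref{theoremcoupling}. The event $F_n$ is introduced precisely to rule out this bad geometry, and the only substantive probabilistic input beyond Theorem~\ref{theoremcoupling} is the absolute continuity of the jump times of $Z$, which itself is immediate from the non-explosion and conditional-exponential structure of the chain.
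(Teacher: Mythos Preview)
Your proof is correct and follows the same approach as the paper's: both use the coupling of Theorem~\ref{theoremcoupling} together with the observation that, with high probability, no jump time of $Z$ lands within $q^{-\alpha n}$ of any fixed $t_i$, which forces $Y_{t_i}^{(n)}=Z_{t_i}$. The only minor difference is that the paper invokes the quantitative bound of Lemma~\ref{Zcontinuity} for this ``no nearby jump'' estimate, whereas you use the softer but equally valid route via absolute continuity of the jump times and continuity from above.
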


For $a\in \mathbb{Z}$ and $t>0$, assuming the initial condition $Z_0=a$, let $D_{a,t}$ be the number of downward jumps made by $Z$ until time $t$ (including a possible jump at time $t$ provided that it is a downward jump).

When $X_i^{(n)}$ is defined as in \eqref{rankincrementdef}, then the initial condition of the process $Y^{(n)}$ must be set to $Y_0^{(n)}=-\frac{n}2$. Thus, the initial condition varies with $n$. Since $\lim_{n\to\infty}Y_0^{(n)}=-\infty$, naturally, we need to understand the limiting behavior of $Z_{a,t}$ as $a\to-\infty$.

\begin{lemma}\label{Dinftyexists}
For any fixed $t>0$, as $a$ tends to $-\infty$, $(D_{a,t},Z_{a,t})$ converge in total variation distance to some random variable $(D_{-\infty,t},Z_{-\infty,t})$. 
\end{lemma}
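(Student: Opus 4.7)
The plan is to establish that the laws of $(D_{a,t},Z_{a,t})$ form a Cauchy sequence in total variation as $a\to-\infty$; by completeness of the total variation metric on probability measures on $\mathbb{Z}_{\ge 0}\times\mathbb{Z}$ this yields the claimed limit $(D_{-\infty,t},Z_{-\infty,t})$.

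For $a\le a'<0$ I will construct a monotone coupling of $(Z_{a,s})_{s\ge 0}$ and $(Z_{a',s})_{s\ge 0}$ on a common probability space with $Z_{a,s}\le Z_{a',s}$ for all $s$, and such that after their first meeting time $\tau_m$ the two processes evolve identically. At a joint state $(x,y)$ with $x\le y$ the coupling has four transitions: both up at rate $q^{-y}$, only the lower process up at rate $q^{-x}-q^{-y}$, both down at rate $q^{x}$, and only the upper process down at rate $q^{y}-q^{x}$. A direct check shows the marginals are correct and that $x\le y$ is preserved. On the event that no downward transition occurs in $[0,\tau_m]$ we have $D_{a,\tau_m}=D_{a',\tau_m}=0$; after $\tau_m$ the two processes make identical jumps, so $(D_{a,t},Z_{a,t})=(D_{a',t},Z_{a',t})$, and the TV Cauchy bound reduces to controlling the probability that this event fails.

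The core of the argument is to show that, with probability $1-o(1)$ as $a,a'\to-\infty$, the coupling merges quickly and with no downward transition in between. Conditioning on ``no downward transition so far'', each upward transition is of type ``only lower up'' with probability $1-q^{-\Delta_s}$ and ``both up'' with probability $q^{-\Delta_s}$, where $\Delta_s=y_s-x_s$. Hence the number $N_B$ of ``both up'' events prior to merging equals $\sum_{\Delta=1}^{\Delta_0}G_\Delta$ with $G_\Delta\sim\mathrm{Geom}(1-q^{-\Delta})$ independent; the means $q^{-\Delta}/(1-q^{-\Delta})$ sum to a constant independent of $\Delta_0=a'-a$, so Markov's inequality gives $\Pr[N_B>L]\to 0$ as $L\to\infty$ uniformly in $\Delta_0$. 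On $\{N_B\le L\}$ the upper chain stays at height $\le a'+L$ throughout, and the total merging time decomposes as $\tau_m=\sum_{x=a}^{a'+N_B}W_x$ with independent $W_x\sim\mathrm{Exp}(q^{-x})$; hence $\mathbb{E}[\tau_m\mid N_B\le L]=O(q^{a'+L})$ and, by Markov, $\tau_m\le q^{a'/2}$ with high probability whenever $L=o(|a'|)$. Under these events the total rate of any downward transition is bounded by $q^{a'+L}$, so the expected number of downward transitions in $[0,\tau_m]$ is at most $q^{a'+L}\cdot q^{a'/2}=q^{3a'/2+L}=o(1)$. Choosing, say, $L(a')=\lfloor\sqrt{|a'|}\rfloor$ makes all the error terms vanish simultaneously.

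The delicate point is a quantitative trade-off: $G_1$ has infinite exponential moment, so the naive bound $\mathbb{E}[\tau_m]\le O(q^{a'})\,\mathbb{E}[q^{N_B}]$ diverges. One must first truncate $N_B$ at a slowly growing threshold $L(a')$ and then balance the polynomially small tail $\Pr[N_B>L]$ against the downward-event bound $q^{3a'/2+L}$; this closes because $L$ can be chosen of lower order than $|a'|$, so $q^{3a'/2+L}\to 0$ while still $L\to\infty$.
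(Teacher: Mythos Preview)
Your argument is correct, and it is genuinely different from the paper's proof. The paper does not use a monotone coupling at all: instead it runs the chain started from $b$ until the first time $\tau$ it hits level $a$ (or until time $t$), and then restarts with an \emph{independent} copy $Z'$ started from $a$ for the remaining time $t-\tau$. By the strong Markov property this gives a valid representation of $(D_{b,t},Z_{b,t})$ as $(B+D'_{a,t-\tau},Z'_{a,t-\tau}+E)$, where $B$ counts downward jumps before $\tau$ and $E$ corrects for the event $\tau=t$. The TV bound then splits into $\mathbb{P}(B>0)$, $\mathbb{P}(\tau>\delta)$, and a continuity term $\mathbb{P}((D'_{a,t-s},Z'_{a,t-s})\ne(D'_{a,t},Z'_{a,t}))$ for $s\in[0,\delta]$; the last of these is handled by the paper's Lemma~\ref{Zcontinuity}, which in turn rests on the Lyapunov-type estimate of Lemma~\ref{rexists}.

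Your monotone coupling replaces this machinery: you never need the continuity bound, because once the two chains merge with no intervening downward jump you have exact equality $(D_{a,t},Z_{a,t})=(D_{a',t},Z_{a',t})$ for all $t\ge\tau_m$, not merely closeness. The price is the more delicate combinatorial analysis of the gap process $\Delta_s$ and the truncation of $N_B$ at a slowly growing level (your ``delicate point''), which the paper's approach sidesteps entirely. Both routes ultimately exploit the same phenomenon---that from a very negative starting point the chain climbs to any fixed level quickly and essentially monotonically---but the paper isolates this as a one-chain hitting-time estimate while you encode it in the coupled gap dynamics. A minor expository point: your analysis of $N_B$ and $\tau_m$ is really carried out in the auxiliary upward-only process (downward transitions deleted), and you should say so explicitly; the passage back to the original process is then a thinning argument bounding the probability that a downward clock rings before the auxiliary merging time, which is exactly what your final expected-count bound $q^{a'+L}\cdot q^{a'/2}$ computes.
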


Now we are ready to state our main results describing the limiting distribution of $\dim\ker C_i$. It turns out that this behavior depends on the parity of $i$. Theorem~\ref{thmdimkerC2k} and Theorem~\ref{thmdimkerC2k-1} describe the even and odd cases, respectively. 

As one can expect from the scaling of the process $Y^{(n)}$, these theorems show that $\dim\ker C_i$ undergoes a phase transition at $i\approx q^{n/2}$.

\begin{theorem}\label{thmdimkerC2k}

Let $k_n$ be a sequence of positive integers, and let $t_n=\frac{q^{-n/2}}{q-1}k_n$. 
 \begin{enumerate}[(1)]
\item\label{thmdimkerC2kpart1} Assume that $\lim_{n\to\infty}t_n=0$, then $\dim\ker {C}^{(n)}_{2k_n}$ converges to $0$ in probability.
\item\label{thmdimkerC2kpart2} Assume that $\lim_{n\to\infty}t_n=t$, where $0<t<\infty$, then $\dim\ker {C}^{(n)}_{2k_n}$ converges to $D_{-\infty,t}$ in total variation distance.

\item\label{thmdimkerC2kpart3} 
Let 
\begin{equation}\label{mundef}
\mu_n=(q-1)q^{n/2}\left(n-2\sum_{h=0}^{n}\pi_n(h)h\right).
\end{equation}
There is a constant $\sigma>0$ such that if $\lim_{n\to\infty}t_n=\infty$, then
\[\frac{\dim\ker {C}^{(n)}_{2k_n}-\mu_n t_n}{\sigma \sqrt{t_n}}\]
converges in distribution to a standard normal variable.

Moreover,
\begin{equation}\label{munas}\mu_n=\frac{2 \sum_{i=1}^{\infty} q^{-i(i-1)}}{1+2\sum_{i=1}^\infty q^{-i^2}}+O(q^{-n/2}).\end{equation}
\end{enumerate}
\end{theorem}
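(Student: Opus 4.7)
The starting observation is the telescoping
\[
\dim\ker C^{(n)}_{2k_n}\;=\;k_n n-\sum_{j=1}^{2k_n}X_j\;=\;\sum_{i=1}^{k_n}\delta_i,\qquad \delta_i:=n-X_{2i-1}-X_{2i}\ge 0,
\]
with nonnegativity from the support condition $d+r\le n$ built into $P_n$. From (\ref{lemmaMarkoveq}), conditional on $X_{2i-1}=b$, one has $P(\delta_i=s)=q^{-s(b+s)}(1+o(1))$ uniformly in $b$, so $P(\delta_i\ge 2)=O(q^{-n})$ and a pair-by-pair case analysis shows that the event $\delta_i=1$ coincides, up to an $O(q^{-n})$ discrepancy, with ``$V_i:=X_{2i}$ performs a downward jump with typical middle state $X_{2i-1}=n-X_{2i-2}$''. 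Consequently, on a good event of probability $1-O(k_n q^{-n})$,
\[
\dim\ker C^{(n)}_{2k_n}\;=\;\#\{\text{downward jumps of }Y^{(n)}\text{ in }[0,t_n]\}.
\]

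The next step is to transfer this downward-jump count from $Y^{(n)}$ (started at $-n/2$) to the continuous-time chain $Z_{-n/2}$. Theorem \ref{theoremcoupling} only applies while $|Y_0|\le 2\alpha n$, so I first handle a short descent phase: for states $a\le -2\alpha n$ the downward rate $q^a\le q^{-2\alpha n}$ of $Z$ is dominated by the upward rate $q^{-a}\ge q^{2\alpha n}$, so $Z$ reaches $[-2\alpha n,2\alpha n]$ in continuous time $O(q^{-2\alpha n})$ while making no downward jump with probability $1-O(q^{-4\alpha n})$; the analogous estimate for the discrete chain $Y^{(n)}$ follows from the same direct manipulation of $P_n$. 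Conditional on this, the landing distributions of $Y^{(n)}$ and $Z_{-n/2}$ inside $[-2\alpha n,2\alpha n]$ agree in total variation up to $o(1)$, and from there Theorem \ref{theoremcoupling} applies: a single application suffices when $t_n=O(q^{\alpha n})$, while for longer windows one iterates on consecutive blocks of length $q^{\alpha n}$ (each application starting from a point in $[-2\alpha n,2\alpha n]$, guaranteed inductively by concentration of $Z$ near $0$) with the $O(q^{-2\alpha n})$ per-block error summable for $t_n=o(q^{3\alpha n})$.

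Parts (1) and (2) follow at once: if $t_n\to 0$ then $P(D_{Z_{-n/2},t_n}\ge 1)\to 0$ since post-descent $Z$ still needs $\Theta(1)$ further continuous time before the downward rate reaches $\Theta(1)$; if $t_n\to t\in(0,\infty)$, Lemma \ref{Dinftyexists} identifies the limit as $D_{-\infty,t}$. For part (3) we need a CLT for $D_{Z,t}$ in stationarity. Writing $D_{Z,t}=M_t+\int_0^t q^{Z_s}\,ds$ with $M_t$ the compensated counting martingale (of predictable quadratic variation $\int_0^t q^{Z_s}\,ds$ and $E[M_t^2]=\mu t$ under $\pi_Z$), the martingale CLT for $M_t$ combined with Kipnis--Varadhan applied to the additive functional $\int_0^t(q^{Z_s}-\mu)\,ds$ of the reversible, geometrically ergodic chain $Z$ gives $(D_{Z,t}-\mu t)/\sqrt t\Rightarrow N(0,\sigma^2)$, with $\sigma^2>0$ because $M_t$ alone contributes $\mu>0$ to the asymptotic variance. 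In the very long-time regime $t_n\gtrsim q^{3\alpha n}$ the iterated coupling becomes too lossy and I would instead invoke the classical CLT directly for the finite-state chain $X^{(n)}$, showing convergence of its (naturally $q^{n/2}$-rescaled) asymptotic variance to the same $\sigma^2$. The centering satisfies $\mu_n t_n = E_{\pi_n}[\dim\ker C^{(n)}_{2k_n}]$ by construction, and the $O(1)$ bias from $X_0=0\ne\pi_n$ is absorbed by $\sqrt{t_n}\to\infty$. For (\ref{munas}), substituting $h=n/2+x$ and using $(q^{-n};q)_h=1+O(q^{-n/2})$ together with $(q^{-h};q)_h\to (q^{-1};q^{-1})_\infty$ (uniform on compacts, with super-Gaussian tails for large $|x|$) gives $\pi_n(n/2+x)=\pi_Z(x)(1+O(q^{-n/2}))$; the symmetry $\pi_Z(x)=\pi_Z(-x)$ kills $\sum_x x\,\pi_Z(x)$ and the surviving $O(q^{-n/2})$ correction, multiplied by $(q-1)q^{n/2}$, equals the stationary downward-jump rate $E_{\pi_Z}[q^a]=\sum_{a\in\mathbb{Z}}q^{-a(a-1)}/\sum_{a\in\mathbb{Z}}q^{-a^2}$, which after the identity $\sum_{a\in\mathbb{Z}}q^{-a(a-1)}=2\sum_{i\ge1}q^{-i(i-1)}$ becomes (\ref{munas}).

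The principal technical obstacle is the quantitative descent argument of the second paragraph: Theorem \ref{theoremcoupling} is only stated for admissible $|Y_0|\le 2\alpha n$, so one must argue independently that both $Y^{(n)}$ and $Z_{-n/2}$ descend to the admissible range in a short transient, make no downward jump along the way, and produce total-variation-close landing distributions, so that the coupling can be restarted from a common (random) point. A secondary difficulty in part (3) is bridging to the very long regime $t_n\gg q^{3\alpha n}$, where the block-by-block coupling fails and one must instead extract the CLT from the classical Markov chain CLT applied to $X^{(n)}$ and verify convergence of the rescaled asymptotic variance to the $\sigma^2$ produced by $Z$.
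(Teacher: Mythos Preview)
Your overall strategy—rewrite $\dim\ker C_{2k}$ as a sum of nonnegative increments $\delta_i=n-X_{2i-1}-X_{2i}$, identify this with a downward-jump count, and transfer to the limiting chain $Z$—is the same as the paper's. The differences are in how the three parts are executed.

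\textbf{Part (1).} The paper does not use the Markov chain at all here. It gives a one-paragraph first-moment argument: for $v$ with $\supp(v)=\{a,\dots,b\}$ one has $\mathbb{P}(C_{2k}v=0)=q^{-(b-a+2)n}$, so the expected number of interval-supported kernel vectors is at most $k^2q^{-n}$, and a trivial lemma reduces any kernel vector to one with interval support. Your coupling route would also work, but is considerably heavier.

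\textbf{Part (2).} You correctly identify the obstacle that Theorem~\ref{theoremcoupling} requires $|Y_0|\le 2\alpha n$, but your proposed fix (match the landing distributions of $Y^{(n)}$ and $Z_{-n/2}$ inside the admissible window) is harder than what is needed and is not what the paper does. The paper instead runs only the discrete chain $X^{(n)}$ until it lands at some $X_{2\tau}$ with $\alpha n\le m-X_{2\tau}\le 2\alpha n$ (its Lemma ``getting close to $0$'' shows this happens quickly and with $\delta_i=0$ throughout), then couples to $Z$ started from $X_{2\tau}-m$, and finally appeals to Lemma~\ref{Dinftyexists}: since $(D_{a,t},Z_{a,t})\to(D_{-\infty,t},Z_{-\infty,t})$ as $a\to -\infty$, the particular landing point does not matter. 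There is no need to track $Z_{-n/2}$ through its own descent or to match landing laws.

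\textbf{Part (3).} Here your split into $t_n\lesssim q^{3\alpha n}$ (iterated coupling) versus $t_n\gg q^{3\alpha n}$ (classical finite-state CLT with rescaled variance converging to $\sigma^2$) is where your sketch is thinnest, and the paper's route is genuinely different and cleaner. The paper works directly with the $X^{(n)}$ chain via regeneration at $m$: it defines excursions of $X^{(n)}$ from $m$, with per-excursion variables $(U_n,(\Delta D)_n)$ (rescaled duration and downward-jump count), and proves via the coupling that $\mathbb{E}U_n^j(\Delta D)_n^h\to\mathbb{E}U_\infty^j(\Delta D)_\infty^h$ for all $j,h\le 3$, uniformly in $n$. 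This single moment-convergence lemma gives both $\Var(V_n)\to\Var(V_\infty)=\sigma^2\mathbb{E}U_\infty$ and a uniform Lyapunov bound, so the CLT follows from the Lyapunov CLT plus a standard stopping-time argument (Kolmogorov's maximal inequality) to pass from a deterministic number of excursions to the random number covering $[0,t_n]$. The initial bias from $X_0=0$ is removed by the same ``first hit $m$ in short time'' argument as in part~(2). This handles all $t_n\to\infty$ at once; there is no regime split, no iterated coupling, and the ``convergence of the rescaled asymptotic variance'' that you flag as a secondary difficulty is exactly the content of the moment-convergence lemma. Your Kipnis--Varadhan argument establishes the CLT only for $Z$ itself, which is a strictly weaker statement.

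\textbf{Asymptotics of $\mu_n$.} Your heuristic (read $\mu_n$ as $(q-1)q^{m}$ times the expected downward-rate correction, identify with $\mathbb{E}_{\pi_Z}[q^a]$) is morally right, but the paper's derivation is a direct expansion of $(\pi_n(m-i)-\pi_n(m+i))/\pi_n(m)$ to order $q^{-m}$, summed against $i$; this makes the $O(q^{-n/2})$ error explicit without appealing to convergence $\pi_n\to\pi_Z$.
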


\newpage
\begin{theorem}\label{thmdimkerC2k-1}

Let $k_n$ be a sequence of positive integers, and let $t_n=\frac{q^{-n/2}}{q-1}k_n$. 
\begin{enumerate}[(1)]

\item\label{thmdimkerC2k-1part1} Assume that $\lim_{n\to\infty}t_n=0$, then the total variation distance of $\dim\ker {C}^{(n)}_{2k_n-1}$ and \break $\dim\ker A_1^{(n)}A_2^{(n)}\cdots A_{k_n}^{(n)}$ converges to $0$, where $A_1^{(n)},A_2^{(n)},\dots, A_{k_n}^{(n)}$ are i.i.d. uniform random $n\times n$ matrices over $\mathbb{F}_q$. 

\item\label{thmdimkerC2k-1part2} Assume that $\lim_{n\to\infty}t_n=t$, where $0<t<\infty$, then \[\dim\ker {C}^{(n)}_{2k_n+1}-\frac{n}2 \text{ converges to } D_{-\infty,t}+Z_{-\infty,t}\text{ in total variation distance.}\]

\item\label{thmdimkerC2k-1part3} Assume that $\lim_{n\to\infty}t_n=\infty$, then
\[\frac{\dim\ker {C}^{(n)}_{2k_n+1}-\frac{n}2-\mu_n t_n}{\sigma \sqrt{t_n}}\]
converges in distribution to a standard normal variable. Here the constants $\mu_1,\mu_2,\dots$ and $\sigma$ are the same as in Theorem~\ref{thmdimkerC2k}.

\end{enumerate}
\end{theorem}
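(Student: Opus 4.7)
The starting point is the identity
\[\dim\ker C_{2k+1}=\dim\ker C_{2k}+(n-X_{2k+1}),\]
which follows from $\rang C_{2k+1}-\rang C_{2k}=X_{2k+1}$ and the fact that $C_{2k+1}$ has one more block column than $C_{2k}$. From \eqref{lemmaMarkoveq} we get the special value $P_n(d,n-d)=(q^{-n};q)_{n-d}$, which is $1-O(q^{-n/2})$ whenever $n-d\ge n/4$. This applies with high probability for $X_{2k_n}$ in the regimes of parts~\eqref{thmdimkerC2k-1part2} and~\eqref{thmdimkerC2k-1part3}, where $X_{2k_n}$ is near $n/2$.

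For part~\eqref{thmdimkerC2k-1part2}, the identity yields
\[\dim\ker C_{2k_n+1}-\tfrac{n}{2}=\dim\ker C_{2k_n}+\bigl(X_{2k_n}-\tfrac{n}{2}\bigr)+\varepsilon_n,\]
with $P(\varepsilon_n\ne 0)=O(q^{-n/2})$. Theorem~\ref{thmdimkerC2k}\eqref{thmdimkerC2kpart2} gives $\dim\ker C_{2k_n}\to D_{-\infty,t}$ in total variation, and both $\dim\ker C_{2k_n}$ and $X_{2k_n}-n/2$ are, up to the same $O(q^{-n/2})$ error, functionals of the path of $Y^{(n)}$ on $[0,t_n]$: the first equals the number of downward jumps (positive increments $n-X_{2j+1}-X_{2j+2}$ occur exactly at second-pair defects, which are the downward jumps of the paired process), the second is $Y_{t_n}^{(n)}$. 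The coupling of Theorem~\ref{theoremcoupling} together with Lemma~\ref{Dinftyexists} then transfers this to joint TV convergence $(\dim\ker C_{2k_n},X_{2k_n}-n/2)\to(D_{-\infty,t},Z_{-\infty,t})$, and summing the two components gives the claim. Part~\eqref{thmdimkerC2k-1part3} is analogous: for $t_n\to\infty$ the chain is near stationarity so $X_{2k_n}-n/2$ is tight, hence $(X_{2k_n}-n/2)/(\sigma\sqrt{t_n})\to 0$ in probability, and combining with Theorem~\ref{thmdimkerC2k}\eqref{thmdimkerC2kpart3} through Slutsky's theorem yields the Gaussian limit.

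Part~\eqref{thmdimkerC2k-1part1} is the most delicate. Set $d_j=\dim\ker C_{2j-1}$ and introduce the event $\mathcal E=\{X_{2i-1}+X_{2i}=n\text{ for all }i\le k_n\}$, equivalent to $\dim\ker C_{2k_n}=0$. A telescoping computation on $\mathcal E$ gives $d_j=n-X_{2j-1}$, so $(d_j)$ becomes a Markov chain driven by the non-increasing chain $(X_{2j-1})$, starting from $d_1=\dim\ker A_{1,1}$ which matches the initial law of the Nguyen--Van Peski chain $M_j=\dim\ker(A_1^{(n)}\cdots A_j^{(n)})$. The transition law of $(d_j)$ on $\mathcal E$ is proportional to $P_n(d,n-d')\cdot(q^{-n};q)_{d'}$ (the second factor coming from conditioning on the next even-pair relation), and an explicit $q$-Pochhammer manipulation shows that, as $n\to\infty$, it converges to the NVP transition kernel; both take the limiting form $\mathbbm{1}(d\le d')q^{-(d'-d)d'}\phi_\infty/(\phi_{d'-d}\phi_{d'})$ with $\phi_k=\prod_{i=1}^k(1-q^{-i})$. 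The failure $P(\mathcal E^c)$ is bounded by a union bound: under $\mathcal E$ the $X$-chain stays in the region $X_{2j-1}=n-d_j$ with $d_j$ of constant order, so the per-step failure probability is $1-(q^{-n};q)_{d_j}=O(q^{-n})$, giving $P(\mathcal E^c)=O(k_n q^{-n})=o(1)$ even under the much weaker hypothesis $k_n\ll q^n$, and certainly under $k_n\ll q^{n/2}$. Combining transition matching with the high-probability event yields TV convergence of $d_{k_n}$ to $M_{k_n}$. The main obstacles are verifying the $q$-Pochhammer identity matching the two transition kernels, and the \emph{a priori} control required to ensure the trajectory stays in the regime where the per-step estimates apply.
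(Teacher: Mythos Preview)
Your arguments for parts~\eqref{thmdimkerC2k-1part2} and~\eqref{thmdimkerC2k-1part3} are essentially the paper's: the identity $\dim\ker C_{2k+1}=\dim\ker C_{2k}+(n-X_{2k+1})$, the high-probability reduction $X_{2k_n+1}=n-X_{2k_n}$, joint TV convergence of $(\dim\ker C_{2k_n},\,X_{2k_n}-n/2)$ to $(D_{-\infty,t},Z_{-\infty,t})$ via the coupling, and tightness plus Slutsky for the Gaussian case. This is fine.

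For part~\eqref{thmdimkerC2k-1part1} your strategy is right but the execution has genuine gaps, and you miss the observation that removes them. Two concrete problems:
\begin{itemize}
\item Your per-step failure estimate $1-(q^{-n};q)_{d_j}=O(q^{-n})$ is false; the correct bound is $O(q^{\,d_j-n})$, and since $d_j$ grows (roughly like $\log_q j$) you would need the very ``a~priori control'' you flag but do not supply. The paper sidesteps this entirely: since $\mathcal E=\{\dim\ker C_{2k_n}=0\}$, Theorem~\ref{thmdimkerC2k}\eqref{thmdimkerC2kpart1} already gives $\mathbb{P}(\mathcal E)\to 1$, so no union bound and no trajectory control are needed.
\item Your plan to show the conditional kernel \emph{converges as $n\to\infty$} to the ``NVP kernel'' is both insufficient and unnecessary. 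Insufficient because pointwise convergence of one-step kernels does not give TV closeness over $k_n\to\infty$ steps (and conditioning on all of $\mathcal E$ is an $h$-transform, not a single-factor tilt). Unnecessary because the matrix product $A_1\cdots A_k$ is itself a block bidiagonal matrix $C'_{2k-1}$ with identity sub-diagonal blocks; its rank-increment chain $(X'_i)$ has the \emph{same} $P_n$ transition at the odd steps and the deterministic rule $X'_{2i}=n-X'_{2i-1}$ at the even steps. Hence on $\mathcal E$ one can couple $(X_i)$ and $(X'_i)$ \emph{exactly} for every finite $n$: no $q$-Pochhammer matching, no limits, no a~priori bounds.
\end{itemize}
The missing idea, then, is the block bidiagonal realization of the matrix product, which turns an approximate kernel comparison into an exact coupling and makes the proof a two-line consequence of $\mathbb{P}(\mathcal E)\to 1$.
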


The results of Nguyen and Van Peski \cite{van2021limits,van2023local,van2023reflecting,nguyen2024universality,nguyen2024rank} provide us a very good understanding of matrix products over prime element fields. More generally, these results very precisely describe the behavior of the Sylow $p$-subgroup of the cokernels of products of matrices over $\mathbb{Z}$, where the entries are i.i.d. copies of a $\mathbb{Z}$-valued random variable which is non-constant mod $p$. See also \cite{meszaros2024universal}, where the similarity between the cokernels of matrix products and block bidiagonal matrices was observed earlier. Combining these results on the corank of matrix products (more specifically, \cite[Theorem 1.4.]{nguyen2024universality} and \cite[Theorem 10.1]{van2023local}) with part \eqref{thmdimkerC2k-1part1} of Theorem~\ref{thmdimkerC2k-1}, we obtain the following theorem. 

\begin{theorem}\label{thmmatrixproduct}
Assume that $q$ is a prime.
\begin{enumerate}[(1)]
 \item Assume that $k_n=k$ is constant, then for all $j\ge 0$, we have
\[\mathbb{P}\left(\dim\ker C^{(n)}_{2k_n-1}=j\right)=\sum_{\substack{r_1,r_2,\dots,r_k\ge 0\\r_1+r_2+\cdots+r_k=j}}(q^{-1};q^{-1})_{\infty}^k \prod_{i=1}^k \frac{q^{-r_i(r_1+\dots+r_i)}}{(q^{-1};q^{-1})_{r_i}}.\]

\item\label{contantorderfluct} Assume that $k_n\to \infty$ and $\lim_{n\to\infty}\frac{q^{-n/2}}{q-1}k_n=0$. Let $n_1<n_2<\cdots$ be a subsequence of the positive integers such that the fractional part $\{-\log_q(k_{n_i})\}$ converges to $\zeta$. Let $\chi=q^{-\zeta}/(q-1)$. Then for all $j\in \mathbb{Z}$, we have
\[\lim_{i\to\infty}\mathbb{P}\left(\dim \ker C^{(n_i)}_{2k_{n_i}-1} -\lfloor \log_q(k_{n_i})+\zeta \rceil =j\right)=\mathbb{P}(\mathcal{L}_{1,q^{-1},\chi}=j),\]
where $\mathcal{L}_{1,q^{-1},\chi}$ is a random variable defined in~\cite{van2023local} and $\lfloor x\rceil$ denotes the integer closest to $x$.

\end{enumerate}
\end{theorem}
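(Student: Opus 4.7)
The proof splits into both parts via a common first step: applying part~\eqref{thmdimkerC2k-1part1} of Theorem~\ref{thmdimkerC2k-1} to reduce the question about $\dim\ker C^{(n)}_{2k_n-1}$ to the analogous question about $\dim\ker\bigl(A_1^{(n)} A_2^{(n)} \cdots A_{k_n}^{(n)}\bigr)$, the corank of a product of $k_n$ i.i.d.\ uniform $n\times n$ matrices over $\mathbb{F}_q$. Since total variation convergence is preserved under a common push-forward and composition with any further distributional limit, every claim we want can be transferred back from the matrix-product side. Because $q$ is assumed prime, the uniform distribution on $\mathbb{F}_q$ is (trivially) non-constant modulo $p=q$, so the standing hypotheses of the Nguyen--Van Peski matrix product results apply; those results supply the remaining limiting-distribution input in each regime.

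For part~(1), with $k_n=k$ held fixed, I would invoke \cite[Theorem~1.4]{nguyen2024universality}, which describes the full limiting joint law of the successive coranks $\bigl(\dim\ker A_1^{(n)}\cdots A_i^{(n)}\bigr)_{i=1}^{k}$ as $n\to\infty$. Writing $r_i=\dim\ker A_1^{(n)}\cdots A_i^{(n)}-\dim\ker A_1^{(n)}\cdots A_{i-1}^{(n)}$ for the corank increments, that joint law places mass
\[
(q^{-1};q^{-1})_\infty^k\prod_{i=1}^k\frac{q^{-r_i(r_1+\cdots+r_i)}}{(q^{-1};q^{-1})_{r_i}}
\]
on the path $(r_1,\dots,r_k)$. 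Summing over all nonnegative compositions $(r_1,\dots,r_k)$ of $j$ and using the total variation reduction then gives the formula for $\mathbb{P}(\dim\ker C^{(n)}_{2k-1}=j)$.

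For part~(2), where $k_n\to\infty$ while still $k_nq^{-n/2}\to 0$, I would invoke \cite[Theorem~10.1]{van2023local}, which precisely identifies the limiting distribution of the appropriately recentered corank of $A_1^{(n)}\cdots A_{k_n}^{(n)}$ along a subsequence on which the fractional part $\{-\log_q k_{n_i}\}$ converges to $\zeta$: the limit is the $\mathcal{L}_{1,q^{-1},\chi}$ random variable with $\chi=q^{-\zeta}/(q-1)$. Combining this with the reduction via Theorem~\ref{thmdimkerC2k-1}\eqref{thmdimkerC2k-1part1} gives the stated convergence.

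The main technical point, and the only genuinely delicate one, is to verify that the regime of validity for the cited Van Peski theorem covers exactly our subsequence hypothesis $\{-\log_q k_{n_i}\}\to\zeta$ under $k_n\to\infty$, $k_nq^{-n/2}\to 0$, and in particular that the integer centering $\lfloor\log_q k_n+\zeta\rceil$ used in our statement matches the centering implicit in \cite[Theorem~10.1]{van2023local}. Once this bookkeeping is aligned, neither part requires any further computation beyond the explicit summation in part~(1).
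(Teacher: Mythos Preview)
Your proposal is correct and matches the paper's approach exactly: the paper states Theorem~\ref{thmmatrixproduct} as a direct consequence of combining part~\eqref{thmdimkerC2k-1part1} of Theorem~\ref{thmdimkerC2k-1} with \cite[Theorem~1.4]{nguyen2024universality} for part~(1) and \cite[Theorem~10.1]{van2023local} for part~(2), with no further argument. Your remark that part~(1) yields the formula only as the $n\to\infty$ limit (rather than an exact identity for finite $n$) is well taken and is indeed what the combination of these results delivers.
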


Note that as it was proved by the author~\cite{meszaros2024universal} that the behavior described in part~\eqref{contantorderfluct} of Theorem~\ref{thmmatrixproduct} is universal for block bidiagonal matrices: Under the somewhat stronger condition that\break $\log(k_n)=o(n)$, the assumption that the entries of the non-zero blocks of $C_{2k_n-1}$ are chosen i.i.d. uniformly at random can be replaced by the weaker assumption that the entries are independent and non-degenerate in a certain sense, and the conclusion of part~\eqref{contantorderfluct} of Theorem~\ref{thmmatrixproduct} remains true.  

Finally, we also consider the corank of a third sequence of random matrices. The motivation for this is that these matrices undergo a Cohen-Lenstra/non-Cohen-Lenstra phase transition on the level of coranks. The significance of the Cohen-Lenstra distribution as the universal limit of cokernels of random matrices is explained in Section~\ref{sectionCohenLenstra}.

Let $\widehat{C}_{2k}$ be a truncated version of $C_{2(k+1)}$, which obtained from $C_{2(k+1)}$ by deleting the first $n/2$ rows and the last $n/2$ rows. Note that $\widehat{C}_{2k}$ is a $(k+1)n\times (k+1)n$ matrix.

\begin{figure}[h!]
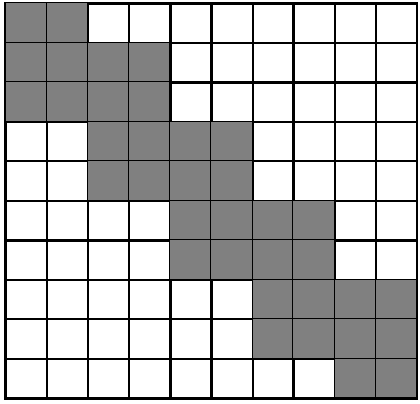

\centering
\begin{tabular}{|p{3pt}|p{3pt}|p{3pt}|p{3pt}|p{3pt}|p{3pt}|p{3pt}|p{3pt}|p{3pt}|p{3pt}|} 
\hline
\cellcolor{gray}&\cellcolor{gray}&& &&& &&& \\ 
\hline
\cellcolor{gray}&\cellcolor{gray}&\cellcolor{gray}& \cellcolor{gray}&&& &&& \\
\hline
\cellcolor{gray}&\cellcolor{gray}&\cellcolor{gray}& \cellcolor{gray}&&& &&& \\
\hline
&&\cellcolor{gray}&\cellcolor{gray}&\cellcolor{gray}& \cellcolor{gray}&&& & \\
\hline
&&\cellcolor{gray}&\cellcolor{gray}&\cellcolor{gray}& \cellcolor{gray}&&& & \\
\hline
&&&&\cellcolor{gray}&\cellcolor{gray}&\cellcolor{gray}& \cellcolor{gray}&& \\
\hline
&&&&\cellcolor{gray}&\cellcolor{gray}&\cellcolor{gray}& \cellcolor{gray}&& \\
\hline
&&&&&&\cellcolor{gray}&\cellcolor{gray}&\cellcolor{gray}& \cellcolor{gray} \\
\hline
&&&&&&\cellcolor{gray}&\cellcolor{gray}&\cellcolor{gray}& \cellcolor{gray} \\
\hline
&&&&&&&&\cellcolor{gray}& \cellcolor{gray} \\
\hline

\end{tabular}
\caption{A schematic image of the block matrix $\widehat{C}_4$. All the cells correspond to $(n/2)\times (n/2)$ blocks. The entries of white cells are set to be $0$. The entries of the gray cells are chosen as i.i.d. uniform elements of~$\mathbb{F}_q$.}

\end{figure}

For $u\ge 0$, let $J_u$ be a $\mathbb{Z}_{\ge 0}$-valued random variable such that
\[\mathbb{P}(J_u=k)=q^{-k(k+u)} \prod_{i=1}^k (1-p^{-i})^{-1}\prod_{i=1}^{k+u} (1-p^{-i})^{-1}\prod_{i=1}^\infty (1-p^{-i}).\]

Consider $Z_t$ with the initial condition $Z_0=0$, and let $D_{0,t}$ be the number of downward jumps made by $Z$ until time $t$. 

Let 
\[L_t=D_{0,t}+|Z_t|_++J_{|Z_t|},\]
where $J_0,J_1,\dots,$ are independent from $Z_t$, and \[|x|_+=\max(x,0),\quad |x|_-=\max(-x,0).\]

As promised, the next theorem describes a Cohen-Lenstra/non-Cohen-Lenstra phase transition on the level of coranks for $ \widehat{C}^{(n)}_{2k_n}$ together with a precise description of the critical behavior.

\begin{theorem}\label{theoremtruncated}
Let $k_n$ be a sequence of positive integers, and let $t_n=\frac{q^{-n/2}}{q-1}k_n$. 
\begin{enumerate}[(1)]
\item \label{theoremtruncatedpart1} Assume that $\lim_{n\to\infty}t_n=0$, then $\dim\ker \widehat{C}^{(n)}_{2k_n}$ converges to $J_0$ in total variation distance.
\item \label{theoremtruncatedpart2} Assume that $\lim_{n\to\infty}t_n=t$, where $0<t<\infty$, then $\dim\ker \widehat{C}^{(n)}_{2k_n}$ converges to $L_t$ in total variation distance.

\item \label{theoremtruncatedpart3}Assume that $\lim_{n\to\infty}t_n=\infty$, then
\[\frac{\dim\ker \widehat{C}^{(n)}_{2k_n}-\mu_n t_n}{\sigma \sqrt{t_n}}\]
converges in distribution to a standard normal variable. Here the constants $\mu_1,\mu_2,\dots$ and $\sigma$ are the same as in Theorem~\ref{thmdimkerC2k}.

\end{enumerate}

\end{theorem}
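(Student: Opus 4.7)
The plan is to reduce Theorem \ref{theoremtruncated} to the Markov-chain framework developed for Theorems \ref{thmdimkerC2k} and \ref{thmdimkerC2k-1}, handling the two truncations carefully. Split the top and bottom blocks as $A_{1,1}=\binom{A_{1,1}^{\mathrm{top}}}{A_{1,1}^{\mathrm{bot}}}$ and $A_{k_n+2,k_n+1}=\binom{A^{\mathrm{top}}_{k_n+2,k_n+1}}{A^{\mathrm{bot}}_{k_n+2,k_n+1}}$ with all four halves of size $(n/2)\times n$; only $A_{1,1}^{\mathrm{bot}}$ and $A^{\mathrm{top}}_{k_n+2,k_n+1}$ survive in $\widehat{C}^{(n)}_{2k_n}$. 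A uniform $(n/2)\times n$ matrix over $\mathbb{F}_q$ has full row rank with probability $1-O(q^{-n/2-1})$, so with overwhelming probability the Markov chain governing the bulk of $\widehat{C}^{(n)}_{2k_n}$ starts from the state $n/2$, that is, at recentered value $Y^{(n)}_0=0$, and Theorem \ref{theoremcoupling} couples the bulk with $Z_t$ started from $Z_0=0$.

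Next I peel off the bottom block by a kernel decomposition. Let $V\subseteq\mathbb{F}_q^n$ be the projection, onto the last column block, of the kernel of $\widehat{C}^{(n)}_{2k_n}$ with the bottom $n/2$ rows temporarily removed. Since $A^{\mathrm{top}}_{k_n+2,k_n+1}$ constrains exactly the last-column-block component of kernel vectors, standard linear algebra yields
\[
\dim\ker\widehat{C}^{(n)}_{2k_n} \;=\; \bigl(\text{bulk kernel with last column block removed}\bigr) \;+\; \dim\ker\bigl(A^{\mathrm{top}}_{k_n+2,k_n+1}\big|_V\bigr).
\]
The first summand is handled as in the proof of Theorem \ref{thmdimkerC2k}\eqref{thmdimkerC2kpart2}: it equals the number of downward jumps of $Y^{(n)}$ up to time $t_n$, converging in total variation to $D_{0,t}$ via Theorem \ref{theoremcoupling} adapted to initial condition $0$. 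For the second summand, the rank increment of the Markov chain at the step that adds the last column block is typically $n/2-Y^{(n)}_{t_n}$ (the deviations having probability $O(q^{-n/2})$), so $\dim V\approx n/2+Y^{(n)}_{t_n}$; since $A^{\mathrm{top}}_{k_n+2,k_n+1}$ is independent of every other block and $V$ is measurable with respect to those blocks, the conditional distribution of $A^{\mathrm{top}}_{k_n+2,k_n+1}|_V$ given $V$ is that of a uniform random linear map $\mathbb{F}_q^{\dim V}\to\mathbb{F}_q^{n/2}$. The classical Cohen-Lenstra asymptotic for uniform rectangular matrices over $\mathbb{F}_q$ then gives, for $\dim V=n/2+y$ and $y$ fixed, the kernel dimension converging in total variation to $|y|_+ + J_{|y|}$: when $y\ge 0$ the matrix is wide and has at least $y$ forced kernel dimensions with Cohen-Lenstra correction $J_y$, and when $y<0$ it is tall, with the limit simply $J_{|y|}$.

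The three parts then follow. For part \eqref{theoremtruncatedpart1} ($t_n\to 0$), both $Y^{(n)}_{t_n}\to 0$ and $D_{0,t_n}\to 0$ in probability, so only the $J_0$ factor from the bottom block remains. For part \eqref{theoremtruncatedpart2} ($t_n\to t$), the joint total-variation convergence $(Y^{(n)}_{t_n},D_{0,t_n})\to (Z_{0,t},D_{0,t})$ from Theorem \ref{theoremcoupling}, together with the conditional limit of the bottom block and its independence from the bulk, yields $L_t$. For part \eqref{theoremtruncatedpart3} ($t_n\to\infty$), the bottom contribution is $O_P(1)$ and is negligible under the $\sqrt{t_n}$ normalisation, so the CLT from Theorem \ref{thmdimkerC2k}\eqref{thmdimkerC2kpart3} carries over with identical $\mu_n$ and $\sigma$, since those constants depend only on stationary properties of the Markov chain $X^{(n)}$.

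The main obstacle is the joint total-variation convergence in part \eqref{theoremtruncatedpart2}: one must show that the bottom-block contribution $\dim\ker(A^{\mathrm{top}}_{k_n+2,k_n+1}|_V)$ converges jointly with $(Y^{(n)}_{t_n},D_{0,t_n})$ to $(|Z_{0,t}|_+ + J_{|Z_{0,t}|}, Z_{0,t}, D_{0,t})$ in total variation, with the Cohen-Lenstra piece $J$ independent of $Z_{0,t}$. This requires the standard Cohen-Lenstra asymptotic for uniform rectangular matrices together with a tail estimate for the Markov chain's final odd-index rank increment; both are standard, but the precise identification of the $|y|_+$ shift calls for careful bookkeeping between the wide and tall cases.
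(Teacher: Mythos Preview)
Your approach is essentially the paper's: start the rank-increment Markov chain at $m$ via the top half-block, couple the bulk to $Z_t$ started at $0$, and add a Cohen--Lenstra correction from the bottom half-block. The paper carries this out through ranks (defining $X_i^*=\rang\widetilde C_{i+1}-\rang\widetilde C_i$ and writing $\dim\ker\widehat C_{2k}$ as $D'_{m,k}+|X^*_{2k}-m|_+ +W_{n,X^*_{2k}-m}$), while you do the dual computation via a kernel projection. So the strategies match; however, two of your intermediate identifications are wrong in ways that happen to cancel.

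First, the rank increment when you add the last column block is $X^*_{2k}=m+Y^{(n)}_{t_n}$, not $m-Y^{(n)}_{t_n}$; hence $\dim V=n-X^*_{2k}=m-Y^{(n)}_{t_n}$, and the restricted map $A^{\mathrm{top}}_{k_n+2,k_n+1}|_V$ is a uniform $m\times(m-Y^{(n)}_{t_n})$ matrix. Its kernel therefore converges to $|Z_{0,t}|_-+J_{|Z_{0,t}|}$, not $|Z_{0,t}|_++J_{|Z_{0,t}|}$. Second, your first summand $\dim K_0=\dim\ker\widetilde C_{2k}=\sum_{j=0}^{k-1}(n-X^*_{2j}-X^*_{2j+1})$ counts the ``odd-step'' defects, which on the coupling event correspond to \emph{upward} jumps of $Z$, not downward ones; so $\dim K_0\to U_{0,t}=D_{0,t}+Z_{0,t}$. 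The two errors compensate because $U_{0,t}+|Z_{0,t}|_-=D_{0,t}+|Z_{0,t}|_+$, recovering $L_t$. You should correct both claims rather than rely on the cancellation.

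One more point on part~\eqref{theoremtruncatedpart3}: in the paper the logical order is reversed---the CLT for $D'_{m,k_n}$ (excursion decomposition plus Lyapunov CLT) is established first, and Theorem~\ref{thmdimkerC2k}\eqref{thmdimkerC2kpart3} is then deduced from it. So you cannot simply invoke Theorem~\ref{thmdimkerC2k}\eqref{thmdimkerC2kpart3}; you need the CLT for the chain started at $m$ as an independent ingredient (though your observation that the boundary term is $O_P(1)$ and hence negligible under the $\sqrt{t_n}$ scaling is exactly how the reduction goes).
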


\subsection{A motivation for Theorem~\ref{theoremtruncated}: The breakdown of the Cohen-Lenstra universality and the critical behavior of Haar-uniform band matrices over the $p$-adic integers}\label{sectionCohenLenstra}

For a moment, let us restrict our attention to the case when $q=p$ is a prime. The distribution of $J_0$ appears as the universal limiting distribution of the corank for a large class of $n\times n$ random matrices over $\mathbb{F}_p$: Wood~\cite{wood2019random} proved that if $\overline{M}_n$ is a random $n\times n$ matrix over $\mathbb{F}_p$, where the entries are i.i.d. copies of a non-constant $\mathbb{F}_p$-valued random variable, then for all $j\ge 0$,
\begin{equation}\label{IntroductionCohenLrank}\lim_{n\to\infty} \mathbb{P}(\dim\ker \overline{M}_n=j)=\mathbb{P}(J_0=j).\end{equation}
See also \cite{kozlov1966rank,kovalenko1975limit,charlap1990asymptotic,kahn2001singularity} for earlier results in this direction. 
Wood in fact proved the universal behavior of a finer invariant, the Sylow $p$-group of the cokernel. The cokernel of an $n\times n$ matrix $M$ over the integers is defined as the factor group $\cok(M)=\mathbb{Z}^n/\mathbb{Z}^n M$. If $\det M\neq 0$, then $\cok(M)$ is a finite abelian group of order $|\det M|$. If $\overline{M}$ is the $n\times n$ matrix over $\mathbb{F}_p$ obtained from $M$ by replacing the entries with their mod $p$ remainder, then $\dim\ker \overline{M}$ equals to the rank of the Sylow $p$-subgroup of $\cok(M)$. Let $M_n$ be a random $n\times n$ matrix over $\mathbb{Z}$, where the entries are i.i.d. copies of a $\mathbb{Z}$-valued random variable, which is not constant mod $p$. Wood proved~\cite{wood2019random} that the Sylow $p$-subgroup $\Gamma_n$ of $\cok(M_n)$ has a limiting distribution, that is, given any deterministic finite abelian $p$-group $G$, we have
\begin{equation}\label{IntroductionCohenL}\lim_{n\to\infty} \mathbb{P}(\Gamma_n\cong G)=\frac{1}{|\Aut (G)|}\prod_{i=1}^{\infty}(1-p^{-i}).
\end{equation}
Note that the limit is universal, that is, it does not depend on the distribution of entries. Relying on the above mentioned identity that $\rang(\Gamma_n)=\dim\ker \overline{M}_n$ one can prove that \eqref{IntroductionCohenL} implies \eqref{IntroductionCohenLrank}. The distribution appearing on the right hand side of \eqref{IntroductionCohenL} is called the Cohen-Lenstra distribution. It first appeared in the conjectures of Cohen and Lenstra on the distribution of the class groups of quadratic number fields~\cite{cohen2006heuristics}. These number theoretic conjectures are still open. As the class group can be obtained as a cokernel~\cite{wood2022number,venkatesh2010statistics}, these conjectures served as the initial motivation for investigating the cokernels of random matrices. When Wood developed the moment method for random abelian groups~\cite{wood2017distribution}, it really sparked the growth of the area of cokernels of random matrices. In recent years, using the moment method, it was proved for several families of random matrices that their cokernels asymptotically follow the Cohen-Lentra distribution or some close variant of it~\cite{nguyen2022random,recent2,recent3,recent4,recent5,recent6,recent7,recent8,meszaros2020distribution,meszaros2023cohen,meszaros2024Zpband,gorokhovsky2024time,kang2024random}, see also the survey of Wood~\cite{wood2022probability}. Although the initial motivation behind the investigation of the
cokernels of random matrices comes from number theory, in the recent years new lines of research
arose based on the inner logic of probability theory~\cite{assiotis2022infinite,van2023p,van2023local,van2021limits,lvov2024random,meszaros2024Zpband,shen2024gaussian,shen2025gaussian,shen2024non}. 

It is often more convenient to consider the cokernel of matrices over the $p$-adic integers $\mathbb{Z}_p$ instead of matrices over $\mathbb{Z}$. For matrices over $\mathbb{Z}_p$, the cokernel is a $p$-group, so we do not need to take the Sylow $p$-group of the cokernel. Another advantage of working over $\mathbb{Z}_p$ is that $\mathbb{Z}_p$ has a finite Haar measure as opposed to $\mathbb{Z}$. The results of Wood also apply in the $p$-adic setting.

Note that all the random variables $J_u$ appear in the work of Wood~\cite{wood2019random}. Namely, if $\overline{M}_n$ is an $(n+u)\times n$ random matrix where the entries are i.i.d. copies of a fixed non-constant $\mathbb{F}_p$ valued random variables, then $\dim \ker \overline{M}_n$ converges to $J_u$.

It is natural to ask how universal the Cohen-Lenstra behavior of the cokernel is. It is known that if we impose some additional algebraic structure on our random matrix model, then this has an effect on the cokernel. For example, for symmetric matrices, the cokernel can be equipped with a perfect, symmetric, bilinear pairing. Thus, to obtain the limiting distribution for the cokernels of symmetric matrices, one needs to modify the Cohen-Lenstra distribution by taking into account the number of such pairings~\cite{clancy2015note,clancy2015cohen,wood2017distribution}. A similar phenomenon occurs for skew symmetric matrices~\cite{delaunay2001heuristics,bhargava2015modeling,nguyen2024local}, and for $p$-adic Hermitian matrices~\cite{lee2023universality}. We again see that the limiting behavior of the cokernel is universal: it depends on the symmetry class of the random matrix model, but not on the specific distribution of the entries. 

Another way one can try to break away from the Cohen-Lenstra universality class is to consider matrices with some underlying geometric structure. As the simplest example of such random matrices, the author considered Haar-uniform band matrices over $\mathbb{Z}_p$ and proved a Cohen-Lenstra/non-Cohen-Lenstra phase transition for them~\cite{meszaros2024Zpband}. This result is motivated by the analogues questions about Gaussian band matrices in classical random matrix theory, as we explain next: A Gaussian $n\times n$ band matrix with band width $w$ is a random symmetric matrix $X_{n,w}$, where $X_{n,w}(i,j)=0$ for all $i,j$ such that $|i-j|>w$, and all the other entries above and on the diagonal are chosen to be i.i.d. standard Gaussians. It is conjectured that band matrices exhibit a phase transition \cite{bourgade2018random}:
\begin{itemize}
 \item If $w\ll \sqrt{n}$, then the eigenvectors of $X_{n,w}$ are \emph{localized}, and the eigenvalues have \emph{Poisson local statistics}. 
 \item If $w\gg \sqrt{n}$, then the eigenvectors of $X_{n,w}$ are \emph{delocalized}, and the eigenvalues have \emph{GOE local statistics}. 
\end{itemize}

In physical terms, this corresponds to a metal/insulator phase transition. The localization of eigenvectors was proved for $w\ll n^{1/8}$ in~\cite{schenker2009eigenvector}. Later, the exponent $1/8$ was improved to ${1/7}$ and then to ${1/4}$ \cite{peled2019wegner,cipolloni2024dynamical,chen2022random}. For fixed $w$, Poisson eigenvalue statistics were proved in~\cite{brodie2022density}, see also~\cite{hislop2022local} for further progress towards Poisson statistics. Improving earlier results \cite{band1,band2,band3,band4,band5}, the delocalization of eigenvectors and the GOE statistics were proved in~\cite{bb1,bb2,bb3} for $w\gg n^{3/4}$.

Note that the cokernel of a matrix can also be read off from its Smith normal form. Thus, similarly to the spectrum, the cokernel can also be obtained by some sort of diagonalization of the matrix. This suggests that the cokernel can be viewed as an analogue of the spectrum, and in this analogy, the Cohen-Lenstra distributed cokernel should correspond to GOE eigenvalue statistics. This analogy combined with the above mentioned conjectures on Gaussian band matrices predicts a Cohen-Lenstra/non-Cohen-Lenstra phase transition for the cokernels of band matrices in the $p$-adic setting. Such a result was proved by the author~\cite{meszaros2024Zpband}: Let $B_n$ be a Haar uniform $n\times n$ band matrix over $\mathbb{Z}_p$ with band width $w_n$, then $\cok(B_n)$ has Cohen-Lenstra limiting distribution if and only if $\lim_{n\to\infty} np^{-w_n}=0$. The author conjectured that at criticality a new one parameter family of limiting distributions arises:
\begin{conjecture}\label{conjecturecritical}
There is a one parameter family of distributions $(\nu_{t})_{t\in\mathbb{R}_+}$ on the set of finite abelian $p$-groups with the following property. Let $n_1<n_2<\dots$ be a sequence of positive integers. Let ${B}_i$ be a Haar uniform $n_i\times n_i$ band matrix over $\mathbb{Z}_p$ with band width $w_i$. Let us assume that $\lim_{i\to\infty} p^{-w_i} n_i=t$. Then for a finite abelian $p$-group $G$, we have
\[\lim_{i\to\infty} \mathbb{P}(\cok({B}_i)\cong G)=\nu_t(G).\]
\end{conjecture}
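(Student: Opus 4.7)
The plan is to follow the Markov chain strategy of this paper, but adapted to Haar-uniform band matrices and tracking the full cokernel group, not merely its rank. Let $B$ be the $n\times n$ band matrix with band width $w$. I would reveal $B$ one column at a time and track a pair $(R_j, S_j)$, where $R_j$ is the corank of the first $j$ columns of $B$ and $S_j$ encodes the ``active'' Smith data at the boundary, i.e.\ the Smith normal form of the last $w$ rows of the partial matrix restricted to coordinates that may still interact with future columns. Because the band structure decouples column $j$ from columns beyond position $j+w$, the sequence $(R_j, S_j)$ forms a Markov chain; its transition rule involves appending a Haar-uniform column in $\mathbb{Z}_p^w$ and then ``retiring'' a coordinate as it leaves the active window. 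The transition probabilities can, in principle, be written in closed form using $q$-Pochhammer symbols analogous to \eqref{lemmaMarkoveq}.

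Under the critical scaling $n p^{-w}\to t$, I expect this Markov chain to exhibit behavior analogous to the process $Z_t$ of the present paper: the corank performs a biased, slowly-mixing random walk, while the active Smith data occasionally registers new invariant factors, analogous to the downward-jump count $D_{0,t}$. The next step would be a coupling argument in the spirit of Theorem~\ref{theoremcoupling}, showing that after rescaling time by $p^{-w}$, the chain converges to a continuous-time group-valued limit process. One would then \emph{define} $\nu_t$ as the law of this process at time $t$, combined with the independent contribution of invariant factors that have already ``frozen out'' of the active window. By analogy with the decomposition $L_t = D_{0,t}+|Z_t|_+ + J_{|Z_t|}$ from Theorem~\ref{theoremtruncated}, one should expect $\nu_t$ to split into a $Z$-contribution governing the rank and independent Cohen--Lenstra-type fluctuations within each rank level.

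The main obstacle will be controlling the Markov chain at the group level rather than only at the rank level. The present paper works with the finite state space $\{0,1,\ldots,n\}$, but the group-valued chain lives on finite abelian $p$-groups of rank at most $w$, which is vastly larger. Two routes seem plausible: either show carefully that the chain concentrates on ``low-complexity'' groups throughout the critical regime (so that an effectively finite state space suffices for the coupling), or bypass trajectory-level coupling entirely and use a moment method in the style of Wood~\cite{wood2017distribution} to compute the marginal group distribution at time $t$ directly from the expected values of $\#\Sur(\cok(B_i),G)$. A secondary difficulty is that a band matrix partitioned into $w\times w$ blocks is block-tridiagonal with \emph{triangular} (not Haar-uniform) off-diagonal blocks, so directly importing Theorem~\ref{theoremtruncated} requires an additional universality argument to show that this triangular off-diagonal structure behaves, for the purposes of cokernel asymptotics in the critical window, like the Haar-uniform pattern analyzed here.
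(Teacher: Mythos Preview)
The statement you are attempting is Conjecture~\ref{conjecturecritical}, which the paper explicitly declares to be \emph{open}: immediately after stating it, the text says ``This conjecture is still open.'' There is therefore no proof in the paper to compare your proposal against, and your submission is not a proof but a research outline --- as you yourself signal with phrases like ``I expect'', ``one should expect'', and ``the main obstacle will be''.

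As a strategy, your plan is reasonable but speculative at several load-bearing points. First, the claim that revealing columns one at a time and tracking a group-valued ``active Smith datum'' of bounded size yields a Markov chain is plausible heuristically, but you have not specified the state space precisely nor written down the transition law; for the rank-only version in this paper (Lemma~\ref{lemmaMarkov}) this step already required a careful linear-algebra argument, and at the group level it is substantially harder because the Smith data of a partially revealed $\mathbb{Z}_p$-matrix does not update by a clean local rule in general. Second, your two proposed workarounds for the infinite state space --- concentration on low-complexity groups, or Wood's moment method --- are both genuine research problems in themselves; neither is known to succeed in the critical window $np^{-w}\to t$, and indeed the paper you cite (\cite{meszaros2024Zpband}) establishes the phase transition for band matrices only at the level of whether the Cohen--Lenstra limit holds, not what replaces it at criticality. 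Third, you correctly flag the triangular-off-diagonal issue, but ``an additional universality argument'' is doing a lot of work here: the results of the present paper rely on the off-diagonal blocks being Haar-uniform (this enters through the explicit formula in Lemma~\ref{lemmaMarkov}), and no such universality is established.

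In short: there is no gap to name because there is no proof --- yours or the paper's. What you have written is a coherent research proposal, not a solution.
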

This conjecture is still open. 

The matrices $\widehat{C}_{2k_n}^{(n)}$ serve as a simplified model of a band matrix over $\mathbb{F}_p$ with band width $n/2$. (We think of the "band width" of $\widehat{C}_{2k_n}^{(n)}$ as $n/2$, because $n/2$ is the minimum distance between the diagonal and the entries of $\widehat{C}_{2k_n}^{(n)}$ that are set be $0$.) On the level of coranks, $\widehat{C}_{2k_n}^{(n)}$ exhibits a  Cohen-Lenstra/non-Cohen-Lenstra phase transition as we prove in Theorem~\ref{theoremtruncated}. Moreover, by part~\eqref{theoremtruncatedpart2} of Theorem~\ref{theoremtruncated}, we also have a precise understanding of the behavior of the corank at criticality. Thus, for $\widehat{C}_{2k_n}^{(n)}$, we can prove the analogue of Conjecture~\ref{conjecturecritical} on the level of coranks.

The author also proved a result for Haar uniform band matrices which can be considered as the analogue of the localization/delocalization phase transition of Gaussian band matrices~\cite{meszaros2024Zpband}. The next section contains an analogue of this result for the matrices $\widehat{C}_{2k_n}^{(n)}$.

\subsection{A localization/delocalization phase transition for the kernel of $\widehat{C}_{2k_n}$}

Given a vector $v\in \mathbb{F}_q^{kn}$, we can uniquely write $v$ as the concatenation of some vectors ${v_1,\dots,v_k\in \mathbb{F}_q^n}$. For $v\in \mathbb{F}_q^{kn}$, let us define,
\[\supp (v)=\{i\in \{1,2,\dots,k\}\,:\, v_i\neq 0\}.\]

We say that $\widehat{C}_{2k_n}$ has a delocalized kernel if $\supp(v)=\{1,2,\dots,k_n+1\}$ for all $v\in {\ker \widehat{C}_{2k_n}\setminus\{0\}}$.

Given a positive integer $L $, we set $\bar{k}_n=\left\lfloor\frac{k_n+1}L \right\rfloor$, and for $i=1,2,\dots,L$, we define
\[V_i=\left\{v\in \mathbb{F}_q^{(k_n+1)n}\,:\,\emptyset\neq \supp v\subset \left[(i-1)\bar{k}_n+1\,,\,i \bar{k}_n\right]\right\}.\]

We say that $\widehat{C}_{2k_n}$ has an $L $-localized kernel if the following event occurs:
\[\text{For all }i=1,2,\dots,L, \text{ there is a }v_i\in V_i\text{ such that }\widehat{C}_{2k_n} v_i=0.\]

\begin{theorem}\label{thmlocdeloc} Let $k_n$ be a sequence of positive integers, and let $t_n=\frac{q^{-n/2}}{q-1}k_n$. 
\begin{enumerate}[(1)]
 \item\label{partdeloc} Assume that $\lim_{n\to\infty} t_n=0$, then
 \[\lim_{n\to\infty} \mathbb{P}(\,\widehat{C}_{2k_n}^{(n)}\text{ has a delocalized kernel}\,)=1.\]
 \item\label{partloc} Assume that $\limsup_{n\to\infty} t_n\ge \varepsilon>0$, then for each $L\ge \varepsilon$, we have 
 \[\limsup_{n\to\infty} \mathbb{P}(\,\widehat{C}_{2k_n}^{(n)}\text{ has an $L$-localized kernel}\,)\ge \left(C\left(\frac{\varepsilon}L\right)^2\right)^{L}\]
 for some constant $C$ depending only on $q$.
\end{enumerate}
\end{theorem}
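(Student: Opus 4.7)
For part~\eqref{partdeloc} the plan is a first moment computation. Let $N$ count the nonzero $v\in\ker\widehat{C}_{2k_n}^{(n)}$ whose block support $S(v)\subsetneq\{1,\ldots,k_n+1\}$. Fix such $v$ with $S$ of $r$ runs, and set $a=\mathbbm{1}[1\in S]$, $b=\mathbbm{1}[k_n+1\in S]$. The equation $\widehat{C}_{2k_n}^{(n)}v=0$ factors over the block rows of $\widehat{C}$: a row $j$ with $\{j-1,j\}\cap S=\emptyset$ imposes no constraint, while each of the remaining $|S|+r$ touched rows imposes an independent uniform random linear condition---of dimension $n/2$ for the two partial boundary rows and dimension $n$ for the full rows (using that the relevant $A_{j,j'}$'s produce uniform images of the fixed nonzero $v_j$'s). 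Summing over the $(q^n-1)^{|S|}$ vectors with support $S$ gives
\[
\sum_{\supp v=S}\mathbb{P}(\widehat{C}_{2k_n}^{(n)}v=0)\ \le\ q^{-nr+(n/2)(a+b)}.
\]
Summing over $S$, the dominant contributions come from $r=1$: runs touching exactly one boundary give $2k_n q^{-n/2}=2(q-1)t_n$, interior runs give $\sim k_n^2 q^{-n}=((q-1)t_n)^2$, and every additional run multiplies the bound by a further factor of order $((q-1)t_n)^2$. Thus $\mathbb{E}[N]=O(t_n)\to 0$, and Markov's inequality finishes part~\eqref{partdeloc}.

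For part~\eqref{partloc} the plan rests on an \emph{exact independence} of the interval sub-problems. For $i=1,\ldots,L$ let $M_i$ be the sub-matrix of $\widehat{C}_{2k_n}^{(n)}$ consisting of the block columns in the $i$-th interval together with the block rows $j\in[(i-1)\bar{k}_n+1,\,i\bar{k}_n+1]$ that touch them, each such row restricted to the relevant columns. Then $\ker M_i\subset\mathbb{F}_q^{\bar{k}_n n}$ is in bijection (via zero-padding in the other intervals) with $V_i\cap\ker\widehat{C}_{2k_n}^{(n)}$, and the random blocks appearing in distinct $M_i$'s are disjoint: the only shared block row $j=i\bar{k}_n+1$ contributes $A_{j,\,i\bar{k}_n}$ to $M_i$ but the independent $A_{j,\,i\bar{k}_n+1}$ to $M_{i+1}$. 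Hence the events $E_i:=\{\dim\ker M_i\ge 1\}$ are mutually independent and
\[
\mathbb{P}\bigl(\widehat{C}_{2k_n}^{(n)}\text{ has an }L\text{-localized kernel}\bigr)\ =\ \prod_{i=1}^{L}\mathbb{P}(E_i).
\]
For interior $i\in\{2,\ldots,L-1\}$, $M_i$ is distributed as $C_{2\bar{k}_n}^{(n)}$; for $i\in\{1,L\}$, $M_i$ is obtained from $C_{2\bar{k}_n}^{(n)}$ only by deletion of a partial $n/2$-row sub-block (corresponding to the partial first or last block row of $\widehat{C}$), and this deletion can only enlarge the kernel. Thus $\mathbb{P}(E_i)\ge\mathbb{P}(\dim\ker C_{2\bar{k}_n}^{(n)}\ge 1)$ for every $i$. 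Passing to a subsequence of $n$'s along which $t_n\to t^{\ast}\ge\varepsilon$, we have $t_{\bar{k}_n}\to t^{\ast}/L$, and Theorem~\ref{thmdimkerC2k}\eqref{thmdimkerC2kpart2} then yields $\mathbb{P}(\dim\ker C_{2\bar{k}_n}^{(n)}\ge 1)\to\mathbb{P}(D_{-\infty,\,t^{\ast}/L}\ge 1)$.

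The last ingredient---and the genuine analytical obstacle---is the quadratic small-time lower bound
\[
\mathbb{P}(D_{-\infty,t}\ge 1)\ \ge\ C(q)\,t^{2}\qquad(0<t\le 1).
\]
Heuristically, from a very negative starting state the drift of $Z$ dominates the noise (at position $b\le 0$ the upward rate $q^{-b}$ dwarfs the downward rate $q^{b}$), so the chain climbs almost deterministically and reaches position $b$ at cumulative time $\approx q^{b}/(q-1)$. Inverting, $Z_s\approx\log_q((q-1)s)$ at time $s$, so the downward-jump rate at time $s$ is $q^{Z_s}\approx (q-1)s$; integrating,
\[
\mathbb{E}[D_{-\infty,t}]\ \approx\ \int_0^t(q-1)s\,ds\ =\ \tfrac{q-1}{2}\,t^{2}.
\]
Making this rigorous via Lemma~\ref{Dinftyexists} (which licenses the $a\to-\infty$ limit) and controlling $\mathbb{E}[D_{-\infty,t}(D_{-\infty,t}-1)]$ by a similar integral, a Paley--Zygmund bound delivers $\mathbb{P}(D_{-\infty,t}\ge 1)\ge Ct^2$. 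Combining with the previous paragraph,
\[
\limsup_{n\to\infty}\mathbb{P}\bigl(\widehat{C}_{2k_n}^{(n)}\text{ has an }L\text{-localized kernel}\bigr)\ \ge\ \bigl(\mathbb{P}(D_{-\infty,\,\varepsilon/L}\ge 1)\bigr)^{L}\ \ge\ \bigl(C(\varepsilon/L)^{2}\bigr)^{L}.
\]
All other steps reduce to elementary bookkeeping with uniform random matrices over $\mathbb{F}_q$ and the block structure of $\widehat{C}_{2k_n}^{(n)}$; the real work is in the quadratic small-time asymptotic of $D_{-\infty,t}$.
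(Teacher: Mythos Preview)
Your overall architecture matches the paper's: a first-moment bound for part~\eqref{partdeloc}, and for part~\eqref{partloc} the exact independence of the interval sub-problems combined with a quadratic small-time lower bound $\mathbb{P}(D_{-\infty,t}\ge 1)\ge Ct^2$ feeding into Theorem~\ref{thmdimkerC2k}\eqref{thmdimkerC2kpart2}. Two differences are worth noting.

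For part~\eqref{partdeloc}, you sum over \emph{all} proper supports $S$, grouping by the number of runs~$r$. The paper instead first proves the reduction lemma that any nonzero kernel vector with proper support yields one whose support is a single interval contained in it (same trick as Lemma~\ref{Lemma44}); this collapses the sum to $r=1$ from the outset and avoids the geometric series in~$r$. Both arguments give $\mathbb{E}N=O(t_n)$, but the interval reduction is shorter.

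For the quadratic bound $\mathbb{P}(D_{-\infty,t}\ge 1)\ge Ct^2$, the paper does not go through moments and Paley--Zygmund. Instead it gives a direct path-probability estimate (Lemma~\ref{lemmavandown}): starting from $a\ll 0$, the event that $\widehat{Z}$ climbs monotonically to level $K=\lfloor\log_q t\rfloor-2$ and then makes a single downward step, all within time~$t$, already has probability $\ge Ct^2$. Your moment heuristic $\mathbb{E}D_{-\infty,t}\approx\int_0^t(q-1)s\,ds$ is correct in spirit, but turning it into a rigorous lower bound via Paley--Zygmund requires matching upper bounds on $\mathbb{E}D_{-\infty,t}^2$ that you only sketch; the explicit path construction sidesteps all of this and is considerably shorter. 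Once you have the quadratic bound, the paper packages it as Corollary~\ref{cor48} (using monotonicity in $k$ rather than your subsequence argument), and the independence step is the same as yours.
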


Note that if $\widehat{C}_{2k_n}$ has an $L$-localized kernel, then $\dim \ker \widehat{C}_{2k_n}\ge L$. Thus, it follows that if $\lim_{n\to\infty} \frac{q^{-m}}{q-1}k_n=t>0$, then for all $L>t$
\[\lim_{n\to\infty} \mathbb{P}(\dim \ker \widehat{C}_{2k_n}\ge L)\ge \left(C\left(\frac{t}L\right)^2\right)^{L}=\exp(-O_t(L\log L)).\]
If we compare this with $\mathbb{P}(J_0\ge L)=\exp(-\Theta(L^2))$, we see that the limiting distribution of $\dim \ker \widehat{C}_{2k_n}$ has a much heavier tail than the distribution of the rank of a Cohen-Lenstra distributed group.

Thus, Theorem~\ref{thmlocdeloc} provides a setting where the heavier than Cohen-Lenstra tail of the corank can be explained by the presence of vectors with small support in the kernel. See \cite{meszaros20242,meszaros2024Zpband,kang2024random} for other occurrences of this phenomenon. It would be nice to build a general theory that can explain why the heavier than Cohen-Lenstra tail of the corank and the localization of the kernel tend to appear together. At this point we do not even have a clear general definition of localization.

\subsection{The case of odd $n$}\label{oddn}
Throughout this section, we assume that $n$ is odd.

Let $\left(Z_t^{\text{odd}}\right)$ be a continuous time random walk on \[\mathbb{Z}+\frac{1}2=\left\{a+\frac{1}2\,:\,a\in\mathbb{Z}\right\}\]
with transition rate matrix
\[Q^{\text{odd}}_Z(a,b)=\begin{cases}
q^{-a}&\text{if }b=a+1,\\
q^a&\text{if }b=a-1,\\
-q^a-q^{-a}&\text{if }$a=b$,\\
0&\text{otherwise.}
\end{cases}\]

The initial distribution of $Z_0^{\text{odd}}$ is to be specified later.

Let $T_i, X_i,Y_i,R_i$ be defined as before but using $Z^{\text{odd}}$ in place of $Z$ and assuming $n$ is odd instead of assuming that $n$ is even.

\begin{theorem}\label{theoremcouplingodd}
Let $\alpha=0.08$. Assume that $Y^{(n)}_0=Z_0^{\text{odd}}=a$, where $a$ is a deterministic constant satisfying $|a|\le 2\alpha n$. Then there is a coupling of the processes $Z_t^{\text{odd}}$ and $Y_t=Y_t^{(n)}$ such that with probability at least $1-O\left(q^{-2\alpha n}\right)$ the following holds:
\begin{enumerate}[(a)]
\item For all $0\le i\le q^{2\alpha n}$, we have $\widehat{Y}_i=\widehat{Z}^{\text{odd}}_i$;
\item For all $0\le i\le q^{2\alpha n}$, we have $|R_i-T_i|\le q^{-\alpha n}$;
\item $T_{\lfloor q^{2\alpha n} \rfloor}\ge q^{\alpha n}$.
\end{enumerate}
\end{theorem}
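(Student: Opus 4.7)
The plan is to repeat the proof of Theorem~\ref{theoremcoupling} essentially verbatim. The only structural change is that for odd $n$ the recentered process $Y_t^{(n)} = X^{(n)}(2\lfloor q^{n/2}(q-1)t\rfloor) - n/2$ now takes values in $\mathbb{Z}+\tfrac12$ instead of $\mathbb{Z}$, so the candidate limit is $Z^{\text{odd}}$ rather than $Z$. Crucially, Lemma~\ref{lemmaMarkov} and the asymptotic analysis of its transition kernel $P_n$ are insensitive to the parity of $n$: the limiting jump rates $q^{-a}$ and $q^{a}$ of the rescaled chain at state $a$ arise regardless of whether $a$ is an integer or a half-integer.

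First I would analyze the two-step transition kernel of $X^{(n)}$ out of a state $h = a + n/2$ with $|a| \le 2\alpha n$. Because $P_n(d,r)$ contains the factor $q^{-(n-d-r)(n-r)}$, the intermediate state after one step concentrates sharply on $n - h = n/2 - a$, and a careful expansion of the $q$-Pochhammer symbols in this window yields
\[
P_n^{(2)}(h, h+1) = \frac{q^{-a}}{q^{n/2}(q-1)}\bigl(1 + O(q^{-(1/2-2\alpha)n})\bigr),\qquad P_n^{(2)}(h, h-1) = \frac{q^{a}}{q^{n/2}(q-1)}\bigl(1 + O(q^{-(1/2-2\alpha)n})\bigr),
\]
with jumps of size $\ge 2$ contributing in total at most $O(q^{-(1-4\alpha)n})$. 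After rescaling time by $q^{n/2}(q-1)$ this reproduces the infinitesimal generator of $Z^{\text{odd}}$ at state $a$, up to a small relative error.

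Next I would build the coupling inductively over embedded jumps. At each jump I pair the one-step direction law of $\widehat{Y}$ with that of $\widehat{Z}^{\text{odd}}$ by a maximal coupling, and, conditional on the directions matching, I couple the geometric holding time of $\widehat{Y}$ (on the discrete scale $q^{-n/2}(q-1)^{-1}$) with the $\mathrm{Exp}(q^a + q^{-a})$ holding time of $\widehat{Z}^{\text{odd}}$ via the standard monotone coupling of a geometric with its limiting exponential. Each step then fails to couple, or to satisfy $|R_i - T_i| \le q^{-\alpha n}$, with probability $O(q^{-(1/2-2\alpha)n})$ as long as $|\widehat{Z}_i^{\text{odd}}| \le 2\alpha n$. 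Since the stationary distribution of $Z^{\text{odd}}$ has super-exponentially decaying tails, a standard hitting-time bound gives $|\widehat{Z}_i^{\text{odd}}| \le 2\alpha n$ for all $i \le q^{2\alpha n}$ with probability $1 - O(q^{-2\alpha n})$. A union bound over the first $q^{2\alpha n}$ steps produces total failure probability $q^{2\alpha n}\cdot O(q^{-(1/2-2\alpha)n}) = O(q^{-(1/2-4\alpha)n})$, which for $\alpha = 0.08$ gives $1/2 - 4\alpha = 0.18 > 0.16 = 2\alpha$ and is comfortably absorbed into the $O(q^{-2\alpha n})$ budget. Part~(c) is then immediate: $\widehat{Z}^{\text{odd}}$ visits $O(1)$-neighborhoods of $0$ a positive fraction of the time, where the exponential holding rate is $O(1)$, so $T_{\lfloor q^{2\alpha n}\rfloor}$ is of order $q^{2\alpha n} \gg q^{\alpha n}$ with overwhelming probability.

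The main obstacle, identical to the even case, is the careful $q$-Pochhammer expansion of $P_n$ uniformly over the window $|h - n/2| \le 2\alpha n$, together with verifying that the exponentially suppressed contributions from non-typical intermediate states or from jumps of size $\ge 2$ decay like $q^{-(1-4\alpha)n}$. Beyond rewriting ``integer $a$'' as ``half-integer $a$'' wherever it appears, no new ideas are required.
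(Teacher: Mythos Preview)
Your proposal is correct and mirrors the paper's approach: the paper explicitly states that the odd-$n$ case is proved identically to Theorem~\ref{theoremcoupling} and omits the details, and your sketch accurately captures the structure of that proof (the $q$-Pochhammer asymptotics of $P_n$ in \eqref{approx1}--\eqref{approx3}, the skeleton coupling of Lemma~\ref{skeletoncoupling}, and the geometric/exponential holding-time coupling of Lemmas~\ref{geomvsexp}--\ref{timecoupling}, all parity-agnostic). The only minor deviation is your recurrence-based heuristic for part~(c), where the paper instead uses Poisson domination of the jump count (Lemma~\ref{noexplosion}) plus Chebyshev; both routes work.
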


\begin{corr}\label{marginalconvodd}
 Let $0<t_1<t_2<\cdots<t_k$ be a deterministic sequence of times. Assume that ${Z}^{\text{odd}}_0$ is constant, and assume that for all large enough $n$, we have $Y_0^{(n)}=Z_0^{\text{odd}}$. Then
$\left(Y_{t_1}^{(n)},Y_{t_2}^{(n)},\dots,Y_{t_k}^{(n)}\right)$ converges to $\left(Z^{\text{odd}}_{t_1},Z^{\text{odd}}_{t_2},\dots,Z^{\text{odd}}_{t_k}\right)$ in total variation distance.
\end{corr}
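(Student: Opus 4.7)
The plan is to deduce Corollary \ref{marginalconvodd} from Theorem \ref{theoremcouplingodd} in exactly the same way that Corollary \ref{marginalconv} is deduced from Theorem \ref{theoremcoupling}: the coupling furnished by the theorem matches the embedded chains and the jump times so closely that, as long as no deterministic time $t_j$ is unluckily close to a jump time of $Z^{\text{odd}}$, we will have $Y^{(n)}_{t_j} = Z^{\text{odd}}_{t_j}$ for every $j$ simultaneously on a high-probability event.

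Fix $\varepsilon > 0$. Since $Z^{\text{odd}}$ is non-explosive (the odd-$n$ analogue of Lemma \ref{noexplosion}) and its trajectory a.s.\ makes only finitely many jumps before time $t_k+1$, we may pick a finite window $B \subset \mathbb{Z}+\tfrac{1}{2}$ such that the event $E_1$ that $(Z^{\text{odd}}_s)_{0\le s\le t_k+1}$ stays inside $B$ has probability at least $1-\varepsilon$. On $E_1$ the instantaneous jump rate $q^a+q^{-a}$ is bounded by a constant $C_B$, so for each $j$ the expected number of jump times of $Z^{\text{odd}}$ falling inside $[t_j-q^{-\alpha n},t_j+q^{-\alpha n}]$ is $O(q^{-\alpha n})$. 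Let $E_2$ be the event that every $t_j$ lies at distance at least $q^{-\alpha n}$ from every jump time $T_i\le t_k+1$; by Markov's inequality, $\mathbb{P}(E_1\cap E_2)\ge 1-\varepsilon - O(k q^{-\alpha n})$. Let $E_3$ be the high-probability event from Theorem \ref{theoremcouplingodd}, with $\mathbb{P}(E_3)\ge 1-O(q^{-2\alpha n})$. For $n$ large enough, assertion (c) gives $T_{\lfloor q^{2\alpha n}\rfloor}\ge q^{\alpha n}>t_k+1$, so on $E_1\cap E_2\cap E_3$ each $t_j$ lies in some interval $(T_i,T_{i+1})$ with $i+1\le q^{2\alpha n}$ and at distance more than $q^{-\alpha n}$ from both endpoints. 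Combining this with (b), which gives $|R_i-T_i|\le q^{-\alpha n}$, forces $t_j\in (R_i,R_{i+1})$; and then by (a) we obtain $Y^{(n)}_{t_j}=\widehat{Y}_i=\widehat{Z}^{\text{odd}}_i=Z^{\text{odd}}_{t_j}$ for all $j$ simultaneously.

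Consequently the total variation distance between the laws of $(Y^{(n)}_{t_1},\dots,Y^{(n)}_{t_k})$ and $(Z^{\text{odd}}_{t_1},\dots,Z^{\text{odd}}_{t_k})$ is at most $\mathbb{P}((E_1\cap E_2\cap E_3)^c)\le 2\varepsilon$ for all sufficiently large $n$, and since $\varepsilon$ was arbitrary this distance tends to $0$. The only delicate point is constructing $E_2$: the transition rates $q^a+q^{-a}$ are unbounded in $a$, so one cannot bound the jump intensity of $Z^{\text{odd}}$ uniformly in advance, and the non-explosion property (plus a localization to a finite window through $E_1$) is what supplies the required uniform rate control. Once that is in place, the derivation is a routine consequence of the pathwise coupling asserted in Theorem \ref{theoremcouplingodd}.
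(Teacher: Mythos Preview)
Your proof is correct and follows the same overall scheme as the paper's argument for Corollary~\ref{marginalconv} (which the paper simply asserts carries over verbatim to the odd case): couple via Theorem~\ref{theoremcouplingodd}, then argue that on a high-probability event each deterministic $t_j$ falls strictly between consecutive jump times of both processes, forcing $Y^{(n)}_{t_j}=Z^{\text{odd}}_{t_j}$.

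The one genuine difference is in how you control the ``no jump near $t_j$'' event. The paper invokes Lemma~\ref{Zcontinuity}, which rests on the a~priori moment bound $\mathbb{E}\,q^{|Z_t|}\le r(t)$ of Lemma~\ref{rexists}; this gives an $n$-independent $\delta>0$ with $\mathbb{P}(\text{jump in }\cup_j[t_j-\delta,t_j+\delta])<\varepsilon$, after which one simply takes $n$ large enough that $q^{-\alpha n}<\delta$. You instead use only non-explosion: localize the trajectory to a finite window $B$, then exploit the bounded jump rate $C_B$ on $B$ to bound the probability of a jump in the shrinking window $[t_j-q^{-\alpha n},t_j+q^{-\alpha n}]$. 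Your route is slightly more elementary (it avoids the Lyapunov-type estimate behind Lemma~\ref{rexists}), at the cost of carrying the extra localization event $E_1$; the paper's route is a bit cleaner once Lemma~\ref{Zcontinuity} is in hand. Both are perfectly valid, and your identification of the unbounded jump rate as the only delicate point is exactly right.
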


For $a\in \mathbb{Z}+\frac{1}2$ and $t>0$, let $D^{\text{odd}}_{a,t}$ be the number of downward jumps made by $Z^{\text{odd}}$ until time~$t$ under the initial condition $Z_0^{\text{odd}}=a$.

\begin{lemma}\label{Dinftyexistsodd}
For any fixed $t>0$, as $a$ tends to $-\infty$, $(D_{a,t}^{\text{odd}},Z_{a,t}^{\text{odd}})$ converge in total variation distance to some random variable $(D_{-\infty,t}^{\text{odd}},Z_{-\infty,t}^{\text{odd}})$. 
\end{lemma}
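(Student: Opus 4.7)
My plan is to mirror the proof of Lemma~\ref{Dinftyexists}, adapted to $Z^{\text{odd}}$. The two processes $Z$ and $Z^{\text{odd}}$ share the same transition rates $q^{-a}$ (upward) and $q^a$ (downward); only the state spaces differ, $\mathbb{Z}$ versus $\mathbb{Z}+\tfrac12$. In particular, the key qualitative features---overwhelming upward drift from very negative states, with a given jump being downward with probability only $q^{2a}/(1+q^{2a})=O(q^{2a})$, and positive recurrence with exponential tails---are entirely unchanged, so each step of the earlier argument transfers line-by-line after shifting the lattice by $\tfrac12$.

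The strategy is a Cauchy argument in total variation. For $a_1<a_2$ both very negative, I build an explicit coupling of $Z_{a_1,\cdot}^{\text{odd}}$ and $Z_{a_2,\cdot}^{\text{odd}}$ under which the pairs $(D_{a_j,t}^{\text{odd}},Z_{a_j,t}^{\text{odd}})$ coincide for $j=1,2$ with probability tending to $1$ as $a_1,a_2\to-\infty$. Fix an intermediate level $b\in\mathbb{Z}+\tfrac12$ (to be taken very negative at the end) and set $\delta=q^{b/2}$. Let $\sigma_j=\inf\{s\ge 0:Z_{a_j,s}^{\text{odd}}=b\}$. By the strong Markov property, the post-hitting segments $(Z_{a_j,\sigma_j+s}^{\text{odd}})_{s\ge 0}$ are both distributed as $Z_{b,\cdot}^{\text{odd}}$ and independent of the pre-$\sigma_j$ segments, so I couple them both to a single common process $W=(W_s)_{s\ge 0}$ with $W_0=b$. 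On the event $G$ that (i) neither $Z_{a_j,\cdot}^{\text{odd}}$ makes a downward jump before time $\sigma_j$, (ii) $\max(\sigma_1,\sigma_2)\le\delta$, and (iii) $W$ makes no jump in $[t-\delta,t]$, we have $Z_{a_j,t}^{\text{odd}}=W_{t-\sigma_j}=W_{t-\delta}$ (since $t-\sigma_j\in[t-\delta,t]$), and $D_{a_j,t}^{\text{odd}}$ equals the number of downward jumps of $W$ on $[0,t-\delta]$; both are independent of $j$.

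To finish I bound $\mathbb{P}(G^c)\to 0$ as $b\to-\infty$. Item (i) fails with probability at most $\sum_{c=-\infty}^{b-1}q^{2c}/(1+q^{2c})=O(q^{2b})$. For (ii), Markov's inequality applied to $\mathbb{E}[\sigma_j\mathbf{1}_{\text{(i) holds}}]\le\sum_{c=-\infty}^{b-1}(q^c+q^{-c})^{-1}=O(q^b)$ yields probability $O(q^b/\delta)=O(q^{b/2})$. The main obstacle is item (iii): I must show that the total jump rate $q^{W_s}+q^{-W_s}$ on the window $[t-\delta,t]$ is bounded, with high probability, uniformly in $b$. I would establish this via a Foster-Lyapunov estimate with $f(x)=q^{|x|}$ yielding $(Q^{\text{odd}}f)(x)\le -c\,f(x)$ outside a bounded set, combined with the estimate that $W$ reaches an $O(1)$ neighborhood of $0$ in mean time $\sum_{c=b}^{-1/2}(q^c+q^{-c})^{-1}=O(1)$ uniformly in $b$; together these give a tail bound on $|W_s|$ for $s\ge t/2$ that does not degrade as $b\to-\infty$, so the probability of a jump in a window of length $\delta\to 0$ is itself $o(1)$. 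The Cauchy criterion in total variation then produces the limit $(D_{-\infty,t}^{\text{odd}},Z_{-\infty,t}^{\text{odd}})$, as desired.
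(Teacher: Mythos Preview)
Your proposal is correct and follows essentially the same route as the paper's proof of Lemma~\ref{Dinftyexists} (which the paper indicates carries over verbatim to the odd case). Both arguments are Cauchy in total variation, built on a coupling in which a chain started far below quickly hits a fixed level with no downward jumps; the only cosmetic difference is that the paper couples $Z_{b,\cdot}$ and $Z_{a,\cdot}$ directly by letting the former run until it hits $a$, whereas you route both $Z_{a_1,\cdot}$ and $Z_{a_2,\cdot}$ through a common threshold $b$ and a shared post-hitting process $W$. Your item~(iii) via the Lyapunov function $f(x)=q^{|x|}$ is exactly the content of the paper's Lemmas~\ref{lemmaderivative}--\ref{rexists} and~\ref{Zcontinuity} (note that the actual drift is $Q^{\text{odd}}f\le -c\,f^2/q+O(1)$ rather than the linear form you wrote, but this only strengthens the conclusion).
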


\begin{theorem}\label{thmdimkerC2kodd}

Let $k_n$ be a sequence of positive integers, and let $t_n=\frac{q^{-n/2}}{q-1}k_n$. 
 \begin{enumerate}[(1)]
\item Assume that $\lim_{n\to\infty}t_n=0$, then $\dim\ker {C}^{(n)}_{2k_n}$ converges to $0$ in probability.
\item Assume that $\lim_{n\to\infty}t_n=t$, where $0<t<\infty$, then $\dim\ker {C}^{(n)}_{2k_n}$ converges to $D^{\text{odd}}_{-\infty,t}$ in total variation distance.

\item Let $\mu_n$ be defined as in \eqref{mundef}. There is a constant $\sigma^{\text{odd}}>0$ such that if $\lim_{n\to\infty}t_n=\infty$, then
\[\frac{\dim\ker {C}^{(n)}_{2k_n}-\mu_n t_n}{\sigma^{\text{odd}} \sqrt{t_n}}\]
converges in distribution to a standard normal variable.
Moreover,
\[\mu_n=\frac{\sqrt{q}\left({1}+2\sum_{i=1}^{\infty} q^{-i^2}\right)}{2\sum_{i=1}^\infty q^{-i(i-1)}}+O(q^{-n/2}).\]
\end{enumerate}
\end{theorem}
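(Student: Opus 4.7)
The plan is to follow the strategy of Theorem \ref{thmdimkerC2k} step by step, replacing each piece of even-$n$ machinery by its odd-$n$ analogue: Theorem \ref{theoremcouplingodd} for Theorem \ref{theoremcoupling}, Corollary \ref{marginalconvodd} for Corollary \ref{marginalconv}, and Lemma \ref{Dinftyexistsodd} for Lemma \ref{Dinftyexists}. The starting identity is
\[\dim\ker C^{(n)}_{2k_n}=k_n n-\sum_{i=1}^{2k_n}X^{(n)}_i=\sum_{i=1}^{k_n}\bigl(n-X^{(n)}_{2i-1}-X^{(n)}_{2i}\bigr),\]
expressing the corank as a non-negative sum of Markov-chain deficits. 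A case analysis of $P_n$ in the bulk shows that under a clean $\pm 1$ jump of $Y^{(n)}$ (which Theorem \ref{theoremcouplingodd} forces via $\widehat Y_i=\widehat Z^{\text{odd}}_i$), an upward jump contributes a unit to $E_{2j-2}:=n-X_{2j-2}-X_{2j-1}$ and zero to $E_{2j-1}:=n-X_{2j-1}-X_{2j}$, while a downward jump does the opposite. Hence $\sum_i E_{2i-1}$ equals the number of downward jumps of $Y^{(n)}$ up to time $t_n$, modulo corrections from non-default middles of probability $O(q^{-n/2})$ per step.

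Parts~(1) and~(2) then amount to transferring this count to $Z^{\text{odd}}$. The initial condition $Y_0^{(n)}=-n/2$ lies outside the window $|a|\le 2\alpha n$ of Theorem \ref{theoremcouplingodd}; however, at state $-n/2$ the upward rate $q^{n/2}$ dominates the downward rate $q^{-n/2}$, so both $Y^{(n)}$ and $Z^{\text{odd}}$ ascend monotonically into the window in scaled time $O(q^{-2\alpha n})$ with probability $1-O(q^{-n})$. This burn-in is negligible against any positive limit $t_n\to t$, and composing it with Theorem \ref{theoremcouplingodd} identifies $\dim\ker C^{(n)}_{2k_n}$ with the downward-jump count of $Z^{\text{odd}}$ started at $-n/2$. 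Lemma \ref{Dinftyexistsodd} then sends $-n/2\to-\infty$ to give $D^{\text{odd}}_{-\infty,t}$ for part~(2); part~(1) follows because $D^{\text{odd}}_{-\infty,t}\to 0$ in probability as $t\to 0$, since the first downward excursion of $Z^{\text{odd}}$ in the bulk takes $\Theta(1)$ scaled time.

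For part~(3), $t_n\to\infty$ implies $k_n\gg q^{n/2}$, so $X^{(n)}$ mixes many times to $\pi_n$ and the CLT for reversible Markov chains applied to $\sum(n-X_{2i-1}-X_{2i})$ gives the Gaussian limit; $\sigma^{\text{odd}}$ is the resulting asymptotic variance, differing from $\sigma$ only in lattice structure. For the asymptotic of $\mu_n=(q-1)q^{n/2}(n-2E_{\pi_n}[X_1])$, substituting $h=n/2+s$ with $s\in\mathbb{Z}+\tfrac12$ and expanding the Pochhammer factors yields
\[\pi_n(n/2+s)\propto q^{-s^2}\Bigl(1-\frac{2q^s+q^{-s}}{(q-1)q^{n/2}}+O(q^{-n})\Bigr).\]
The symmetric $q^{-s^2}$ factor cancels in $-2\sum_s s\,\pi_n(n/2+s)$; the asymmetric correction evaluates, via $\sum_{s\in\mathbb{Z}+1/2}q^{-s^2}=2q^{-1/4}\sum_{j\ge1}q^{-j(j-1)}$ together with $\sum_{s\in\mathbb{Z}+1/2}s\,q^{-s^2}(2q^s+q^{-s})=\tfrac12 q^{1/4}(1+2\sum_{i\ge1}q^{-i^2})$ (both obtained by completing the square $\pm s-s^2=-(s\mp\tfrac12)^2+\tfrac14$ and using the symmetry $\sum_u u\,q^{-u^2}=0$), to the stated $\sqrt q\,(1+2\sum q^{-i^2})/(2\sum q^{-j(j-1)})+O(q^{-n/2})$. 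The main technical obstacle is the first step: a single non-default middle has probability $O(q^{-n/2})$ per step, and over $k_n=\Theta(q^{n/2})$ steps there are $\Theta(1)$ such events, which cannot be discarded by a naive expectation bound. Matching the deficit sum to the downward-jump count in total variation therefore requires coupling the middle values themselves, and this is presumably where the even-case proof does its heaviest work; I would expect the argument to transfer to odd $n$ with only cosmetic lattice changes.
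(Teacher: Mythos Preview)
Your overall strategy matches the paper's: it explicitly says the odd-$n$ proofs are ``almost the same'' as the even ones and gives none separately. Your clean-jump analysis agrees with \eqref{Dpasdown}, and your $\mu_n$ computation via half-integer $s$ and completing the square is correct and yields the stated constant. For Part~(1), your route (deduce it from Part~(2) via $D^{\text{odd}}_{-\infty,t}\to 0$ as $t\to 0$, implicitly using the monotonicity of $k\mapsto\dim\ker C_{2k}$) is valid but different from the paper's: there the subcritical case is handled by a direct first-moment count on $\ker C_{2k}$ over vectors with interval support (Lemma~\ref{Lemma44}), bypassing the coupling entirely; that argument is parity-blind and shorter.

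For Part~(3), your sentence ``the CLT for reversible Markov chains gives the Gaussian limit'' hides a genuine gap. For each fixed $n$ that CLT yields a Gaussian with some variance parameter $\sigma_n^2$, but the theorem demands a single $\sigma^{\text{odd}}$ as $n\to\infty$, hence both $\sigma_n\to\sigma^{\text{odd}}$ and a CLT that is uniform in $n$. The paper's even-$n$ proof does \emph{not} invoke a black-box Markov-chain CLT: it decomposes $D'_{m,k_n}$ into i.i.d.\ excursions of $X^{(n)}$ away from $m$, couples their lengths $U_n$ and increments $(\Delta D)_n$ to those of $Z$ to obtain convergence of all mixed moments up to order three (Lemma~\ref{Ljapunovcond}, resting on the stochastic-domination bounds of Lemmas~\ref{Hexsits} and~\ref{finitePoissonmoment} and the uniform excursion tail of Lemma~\ref{Utail}), and only then applies the Lyapunov CLT to the resulting triangular array. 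This excursion machinery---not the non-default-middle issue you flag, which is already absorbed into the clean-jump event of Theorem~\ref{theoremcoupling0}---is where the Gaussian phase does its heaviest work, and it is what you would actually need to transfer to the odd lattice.
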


Note that although $\mu_n$ is defined by the same formula for both even and odd $n$, the asymptotic value of $\mu_n$ depends on the parity of $n$. 

Theorem~\ref{thmdimkerC2k-1} has a similar analogue for odd $n$.

As the proofs in the case of odd $n$ are almost the same as in the case of even $n$, we only provide proofs for the even case.

\section{The Markovian property of the rank increments -- The proof of Lemma~\ref{lemmaMarkov}}

Let $U_1$ be the subspace of $\mathbb{F}_q^{n(k+1)}$ generated by the first $kn$ columns of $C_{2k+1}$, and let $U_2$ be the subspace of $\mathbb{F}_q^{n(k+1)}$ generated by all the columns of $C_{2k+1}$. Let $R:\mathbb{F}_q^{n(k+1)}\to \mathbb{F}_q^{nk}$ be the projection onto the first $nk$ components. Let $i=1$ or $2$. Applying the rank-nullity theorem to the restriction of $R$ to $U_i$, we obtain that
\[\dim U_i=\dim (U_i\cap \ker R)+\dim (\range (R\restriction U_i)).\]

Note that $\range (R\restriction U_i)$ is just the range of the matrix $C_{2k-1}$ for both choices of $i$. Thus, $\dim (\range R\restriction U_i)=\rang C_{2k-1}$. Also, $\dim U_1=\rang C_{2k}$ and $\dim U_2=\rang C_{2k+1}$. Thus, we get the following equations
\begin{align}
\rang C_{2k}&=\dim (U_1\cap \ker R)+\rang C_{2k-1},\label{delta1}\\
\rang C_{2k+1}&=\dim (U_2\cap \ker R)+\rang C_{2k-1}\nonumber.
\end{align}
Subtracting these two equations, it follows that
\[X_{2k+1}=\rang C_{2k+1}-\rang C_{2k}=\dim (U_2\cap \ker R)-\dim (U_1\cap \ker R).\]

Let $V$ be the subspace of $\mathbb{F}_q^{n(k+1)}$ generated by the last $n$ columns of $C_{2k+1}$. Observe that $V\subset \ker R$. Thus, \[U_2\cap \ker R=(U_1+V)\cap \ker R=(U_1\cap \ker R)+V.\] 

Let $Q:\mathbb{F}_q^{n(k+1)}\to \mathbb{F}_q^{n}$ be the projection onto the last $n$ components. Clearly, $Q$ is an isomorphism when restricted to $\ker R$, therefore $\dim (U_i\cap \ker R)=\dim (\range (Q\restriction (U_i\cap \ker R)))$. Also, $\range (Q\restriction V)=\range A_{k+1,k+1}$. Let $W=\range (Q\restriction(U_1\cap \ker R))$. It follows that
\[X_{2k+1}=\dim (W+\range A_{k+1,k+1})-\dim (W)=\dim((\range A_{k+1,k+1}+W)/W).\]

Also, by \eqref{delta1}, we see that $\dim W=\rang C_{2k}-\rang C_{2k-1}=X_{2k}$.

Let $\tilde{A}_{k+1,k+1}:\mathbb{F}_q^n\to \mathbb{F}_q^n/W$ defined by $\tilde{A}_{k+1,k+1}(v)=A_{k+1,k+1}v+W$, then 
\[X_{2k+1}=\dim\range \tilde{A}_{k+1,k+1} .\]

Note that $A_{k+1,k+1}$ a uniform random $n\times n$ matrix over $\mathbb{F}_q$ and it is independent from $W$. Thus, $\tilde{A}_{k+1,k+1}$ will be a uniform random linear map from $\mathbb{F}_q^n$ to $\mathbb{F}_q^n/W$. By choosing bases, $\tilde{A}_{k+1,k+1}$ can be identified by a uniform random $(n-\dim W)\times n$ matrix. Thus, recalling that $\dim W=X_{2k}$, we get that
\begin{align*}\mathbb{P}(X_{2k+1}=r\,|\, X_{2k}=d)&=\mathbb{P}(\text{A uniform random $(n-d)\times n$ matrix over $\mathbb{F}_q$ has rank }r)\\
&=\mathbbm{1}(d+r\le n)q^{-(n-d-r)(n-r)} \frac{(q^{-(n-d)};q)_r (q^{-n};q)_r}{(q^{-r};q)_r},
\end{align*}
where the last equality follows from the following well known lemma. For the reader's convenience, we provide a proof.
\begin{lemma}
For $d\ge 0$, let $B$ a uniform random $(n-d)\times n$ matrix over $\mathbb{F}_q$, then for all $r\le n-d$, we have
\[\mathbb{P}(B\text{ has rank }r)=q^{-(n-d-r)(n-r)} \frac{(q^{-(n-d)};q)_r (q^{-n};q)_r}{(q^{-r};q)_r}.\]
\end{lemma}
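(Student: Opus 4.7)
The plan is to count $(n-d)\times n$ matrices of rank $r$ directly and then convert the resulting product to the $q$-Pochhammer form. First, I would parametrize a rank-$r$ matrix $B$ by its row space $V\subset\mathbb{F}_q^n$ together with a surjective linear map $\mathbb{F}_q^{n-d}\twoheadrightarrow V$ (sending the $i$-th standard basis vector to the $i$-th row of $B$). The number of $r$-dimensional subspaces of $\mathbb{F}_q^n$ is the Gaussian binomial coefficient $\binom{n}{r}_q$, and once such a $V$ is fixed, the number of surjections $\mathbb{F}_q^{n-d}\twoheadrightarrow V$ equals the number of ordered $(n-d)$-tuples in $V\cong \mathbb{F}_q^r$ whose first $r$ entries form a basis times the number of ways to complete; equivalently, it is $\prod_{j=0}^{r-1}(q^{n-d}-q^j)$. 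Dividing by the total count $q^{(n-d)n}$ gives
\[
\mathbb{P}(\rang B=r)=\frac{1}{q^{(n-d)n}}\binom{n}{r}_q\prod_{j=0}^{r-1}(q^{n-d}-q^j).
\]

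Next I would rewrite every factor $q^m-q^j=q^m(1-q^{j-m})$ to produce $q$-Pochhammer symbols. Namely,
\[
\prod_{j=0}^{r-1}(q^n-q^j)=q^{nr}(q^{-n};q)_r,\quad \prod_{j=0}^{r-1}(q^{n-d}-q^j)=q^{(n-d)r}(q^{-(n-d)};q)_r,\quad \prod_{j=0}^{r-1}(q^r-q^j)=q^{r^2}(q^{-r};q)_r.
\]
Since $\binom{n}{r}_q=\prod_{j=0}^{r-1}(q^n-q^j)/\prod_{j=0}^{r-1}(q^r-q^j)$, this gives $\binom{n}{r}_q=q^{r(n-r)}(q^{-n};q)_r/(q^{-r};q)_r$. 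Substituting into the probability yields
\[
\mathbb{P}(\rang B=r)=q^{r(n-r)+(n-d)r-(n-d)n}\,\frac{(q^{-n};q)_r(q^{-(n-d)};q)_r}{(q^{-r};q)_r}.
\]

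Finally I would verify the exponent bookkeeping: $r(n-r)+(n-d)r-(n-d)n=(n-r)\bigl(r-(n-d)\bigr)=-(n-r)(n-d-r)$, which matches the desired $q^{-(n-d-r)(n-r)}$. This is essentially the whole argument; there is no conceptual obstacle, and the only thing that requires attention is the exponent arithmetic in the last step, which I would expect to be the most error-prone part. One sanity check I would include is the boundary case $r=n-d$, where the formula should reduce to the probability that a uniform $(n-d)\times n$ matrix has full row rank, namely $\prod_{j=0}^{n-d-1}(1-q^{j-n})$, providing a quick consistency test.
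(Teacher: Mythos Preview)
Your proof is correct and follows essentially the same strategy as the paper: count rank-$r$ matrices by fixing an $r$-dimensional subspace and counting matrices that realize it, then multiply by a Gaussian binomial. The paper works with the \emph{column} space $V\subset\mathbb{F}_q^{n-d}$ and computes $\mathbb{P}(\range B=V)$ directly for a coordinate subspace, whereas you work with the \emph{row} space $V\subset\mathbb{F}_q^{n}$ and count surjections $\mathbb{F}_q^{n-d}\twoheadrightarrow V$. These are dual versions of the same argument and lead to the same algebra.

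One small point: your stated justification for the surjection count is not quite right. A spanning $(n-d)$-tuple of vectors in $V\cong\mathbb{F}_q^r$ need not have its first $r$ entries forming a basis, so the ``first $r$ form a basis, then complete'' description undercounts. The formula $\prod_{j=0}^{r-1}(q^{n-d}-q^j)$ is nonetheless correct; the clean way to see it is to note that a surjection $\mathbb{F}_q^{n-d}\to\mathbb{F}_q^r$ is the same as an $r\times(n-d)$ matrix of full row rank, i.e., an ordered $r$-tuple of linearly independent vectors in $\mathbb{F}_q^{n-d}$. Alternatively, count by first choosing the $(n-d-r)$-dimensional kernel (a factor $\binom{n-d}{r}_q$) and then an isomorphism of the quotient with $V$ (a factor $|\mathrm{GL}_r(\mathbb{F}_q)|$), and observe that the product telescopes. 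With this correction your argument is complete; the exponent bookkeeping and the $r=n-d$ sanity check are both fine.
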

\begin{proof}
Let $V$ be an $r$ dimensional subspace of $\mathbb{F}_q^{n-d}$. We claim that 
\begin{equation}\label{rangeequalformula}
\mathbb{P}(\range B=V)=q^{-(n-d-r)n}\prod_{i=0}^r\frac{q^n-q^i}{q^n}.
\end{equation}
By symmetry, it is enough to prove this when \[V=\{v\in \mathbb{F}_q^{n-d}:v_i=0\text{ for all }i>r\}.\]
In this case, the event $\range B=V$ is the same as the event that the first $r$ rows of $B$ are linearly independent and the last $n-d-r$ rows of $B$ are all equal to $0$. Thus, \eqref{rangeequalformula} follows easily.

The number of $r$ dimensional subspaces $V$ of $\mathbb{F}_q^{n-d}$ is given by the Gaussian binomial coefficient ${{n-d}\choose{r}}_q$. Therefore, the statement follows easily.
\end{proof}

\begin{remark}\label{remarkI}
 If $A_{k+1,k+1}$ is the identity matrix, then $\range \tilde{A}_{k+1,k+1}=\mathbb{F}_q^n/W$. Thus, 
 \[X_{2k+1}=\dim \range \tilde{A}_{k+1,k+1}=\mathbb{F}_q^n/W=n-\dim W=n-X_{2k}.\]
 Similarly, one can prove that if $A_{k+2,k+1}=I$, then $X_{2k+2}=n-X_{2k+1}$.
\end{remark}

Almost the same argument can be used to prove that 
\[\mathbb{P}(X_{2k+2}=r\,|\, X_{2k+1}=d)=\mathbbm{1}(d+r\le n)q^{-(n-d-r)(n-r)} \frac{(q^{-(n-d)};q)_r (q^{-n};q)_r}{(q^{-r};q)_r}.\]

We just need to redefine $U_1$ and $U_2$ as follows. Let $U_1$ be the subspace of $\mathbb{F}_q^{n(k+1)}$ generated by the first $(k+1)n$ rows of $C_{2k+2}$, and let $U_2$ be the subspace of $\mathbb{F}_q^{n(k+1)}$ generated by all the rows of $C_{2k+2}$.

\begin{remark}\label{remarkOthern}
The same argument as above gives that if $C'_{2k+2}$ is obtained from $C_{2k+2}$ by replacing the $n\times n$ block $A_{k+2,k+1}$ by a uniform random $n'\times n$ block, then 
\begin{align*}\mathbb{P}&\left(\rang(C'_{2k+2})-\rang(C_{2k+1})=r\,|\, X_{2k+1}=d\right)\\&\qquad=\mathbb{P}(\text{A uniform random $n'\times(n-d)$ matrix over $\mathbb{F}_q$ has rank }r)\\
&\qquad=\mathbbm{1}(r\le \min(n',n-d))q^{-(n-d-r)(n'-r)} \frac{(q^{-(n-d)};q)_r (q^{-n'};q)_r}{(q^{-r};q)_r}.
\end{align*}

\end{remark}

\bigskip

Thus, we proved that $(X_i)$ is indeed a Markov chain with the transition matrix $P_n$ given in~\eqref{lemmaMarkoveq}. This Markov chain is clearly irreducible and aperiodic. 

Next, we prove that it is also reversible by applying Kolmogorov's criterion. 

Let $v_0,v_1,\dots,v_\ell=v_0$ be a cycle. We need to prove that 
\begin{equation}\prod_{i=1}^{\ell} P_n(v_{i-1},v_i)=\prod_{i=1}^{\ell} P_n(v_{i},v_{i-1}).\label{Kolmogorov}\end{equation}

It is more convenient to rewrite $P_n(d,r)$ as

\[P_n(d,r)=\mathbbm{1}(d+r\le n)q^{-(n-d)(n-r)+r(n-r)} \frac{(q^{-n};q)_{r+d} (q^{-n};q)_r}{(q^{-n};q)_d(q^{-r};q)_r}.\]
If $v_{i-1}+v_i>n$ for some $i$, then both sides of \eqref{Kolmogorov} are $0$. Otherwise, both sides of \eqref{Kolmogorov} are equal to 
\[\prod_{i=1}^\ell q^{-(n-v_i)(n-v_{i-1})+v_i(n-v_i)} \frac{(q^{-n};q)_{v_i+v_{i-1}} }{(q^{-v_i};q)_{v_i}}.\]
Thus, we proved reversibility.

As a corollary of reversibility, the stationary distribution $\pi_n$ satisfies
\[\pi_n(h)=\frac{P_n(0,h)}{P_n(h,0)}\pi_n(0)=q^{h(n-h)}\frac{(q^{-n};q)_h^2}{(p^{-h};q)_h}\pi_n(0).\]

Thus, the last statement of Lemma~\ref{lemmaMarkov} follows.

\section{Constructing a coupling of $Z_t$ and $Y_t$ -- The proof of Theorem~\ref{theoremcoupling}}

Throughout the paper, we use the notation $m=n/2$.

Let $Q_0=0$, and for $j\ge 0$, let 
\begin{equation}\label{Qidef}Q_{j+1}=\min\{i>Q_j\,:\, X_{2i}\neq X_{2(i-1)}\}.\end{equation}

Note that $R_j=\frac{q^{-m}}{q-1}Q_j$.
We also define 
\begin{equation}\label{hatXdef}\widehat{X}_j=X_{2Q_j}-m.\end{equation}

Note that $\widehat{X}_j=\widehat{Y}_j$ for all $j$.

The next theorem is clearly stronger than Theorem~\ref{theoremcoupling}.

\begin{theorem}\label{theoremcoupling0}
Assume that $\widehat{X}_0=Z_0=a$, where $a$ is a deterministic constant satisfying $|a|\le 2\alpha n$. Then there is a coupling of the processes $X_i$ and $Z_t$ such that with probability at least $1-O\left(q^{-2\alpha n}\right)$ the following holds:
\begin{enumerate}[(a)]
\item \label{couplingevent1} For all $0\le i\le q^{2\alpha n}$, we have $\widehat{X}_i=\widehat{Z}_i$;
\item \label{couplingevent2}For all $0\le i\le q^{2\alpha n}$, we have $\left|\frac{q^{-m}}{q-1}Q_i-T_i\right|\le q^{-\alpha n}$;
\item \label{couplingevent3}$T_{\lfloor q^{2\alpha n} \rfloor}\ge q^{\alpha n}$,
\item \label{couplingeventc}For all $1\le i\le Q_{\lfloor q^{2n\alpha}\rfloor}$, $|n-X_{2i-2}-X_{2i-1}|+|n-X_{2i-1}-X_{2i}|\le 1$;
\end{enumerate}
\end{theorem}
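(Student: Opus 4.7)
The plan is to expand the two-step transition kernel of $X$ at states $m+a$ with $|a|\le 2\alpha n$, verify it agrees with the generator of $Z$ after the prescribed time rescaling up to exponentially small error, and then couple inductively along the jump times $Q_j$ of $\widehat X$, using a union bound over the at most $q^{2\alpha n}$ jumps.

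First I would parametrize a two-step path from $X_{2i}=m+a$ by $X_{2i+1}=m-a-\delta_1$, $X_{2i+2}=m+a+\delta_1-\delta_2$ with $\delta_1,\delta_2\ge 0$, where non-negativity is forced by the indicator $\mathbbm{1}(d+r\le n)$ appearing in $P_n$. Expanding $P_n(m+a,m-a-\delta_1)P_n(m-a-\delta_1,m+a+\delta_1-\delta_2)$ in powers of $q^{-n}$, uniformly for $|a|\le 2\alpha n$, I expect to obtain
\begin{align*}
\mathbb{P}(X_{2i+2}=m+a\mid X_{2i}=m+a) &= 1-\tfrac{q^{-m}(q^a+q^{-a})}{q-1}+O\!\bigl(q^{-n(1-C\alpha)}\bigr),\\
\mathbb{P}(X_{2i+2}=m+a+1\mid X_{2i}=m+a) &= \tfrac{q^{-(m+a)}}{q-1}+O\!\bigl(q^{-n(1-C\alpha)}\bigr),\\
\mathbb{P}(X_{2i+2}=m+a-1\mid X_{2i}=m+a) &= \tfrac{q^{-(m-a)}}{q-1}+O\!\bigl(q^{-n(1-C\alpha)}\bigr),
\end{align*}
while the total mass on paths with $\delta_1+\delta_2\ge 2$ (which includes every jump of size $\ge 2$) is $O(q^{-n(1-C\alpha)})$ for some absolute constant $C$. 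Dividing by the time step $\frac{q^{-m}}{q-1}$, the $+1$/$-1$ probabilities match the rates $q^{-a}$/$q^a$ of $Z$, the ratio matches the jump distribution of $\widehat Z$, and the expected number of two-step intervals before a change matches $\frac{q-1}{q^{-m}(q^a+q^{-a})}$, i.e., holding time $\frac{1}{q^a+q^{-a}}$ in rescaled time.

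Next I would build the coupling inductively. Assuming $\widehat X_j=\widehat Z_j=a$ with $|a|\le 2\alpha n$, I use a maximal coupling between the jump directions of $\widehat X$ and $\widehat Z$, which agree with probability $1-O(q^{-n(1-C\alpha)})$. The number of "no-change" two-step intervals of $X$ before the next change is geometric with parameter $p_a=\tfrac{q^{-m}(q^a+q^{-a})}{q-1}(1+O(q^{-n(1-C\alpha)}))$; I couple this to $T_{j+1}-T_j\sim\mathrm{Exp}(q^a+q^{-a})$ through a common exponential clock so that the increment discrepancy is at most $q^{-3\alpha n}$ on the good event. Condition (a) then holds by construction; for (b), summing at most $q^{2\alpha n}$ discrepancies of size $q^{-3\alpha n}$ gives the required $q^{-\alpha n}$ bound; (d) is exactly the event $\delta_1+\delta_2\le 1$ at every intermediate step, which is part of the good event. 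Condition (c) follows because on the good event $T_{\lfloor q^{2\alpha n}\rfloor}$ is a sum of $q^{2\alpha n}$ independent exponentials of rate at most $2q^{2\alpha n}$, so a Chernoff bound places it well above $q^{\alpha n}$. A union bound over the $q^{2\alpha n}$ jumps gives total failure probability $O(q^{2\alpha n}\cdot q^{-n(1-C\alpha)})=O(q^{-2\alpha n})$, which is valid for $\alpha=0.08$ and any reasonable $C$. One also needs $|\widehat X_j|\le 2\alpha n$ throughout; this is maintained by a separate tail bound using that $Z$'s stationary measure has tails proportional to $q^{-a^2}$, so excursions past $2\alpha n$ in $q^{2\alpha n}$ jumps have probability $\ll q^{-2\alpha n}$.

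The main obstacle is the uniform $q$-Pochhammer expansion: controlling $(q^{-n};q)_k$ and products of such symbols simultaneously for $k=m\pm a\pm\delta_i$, while tracking enough error terms that the compounded bound survives the final union bound. A secondary technical point is the geometric-to-exponential coupling of waiting times that produces the pointwise bound on $|R_i-T_i|$; this is implemented through a shared clock once one knows that the per-state geometric parameter agrees with the exponential rate to within $O(q^{-n(1-C\alpha)})$.
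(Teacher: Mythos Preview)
Your proposal is correct and follows essentially the same route as the paper: expand the two-step kernel of $X$ near $m$, match it to the generator of $Z$, couple the jump skeletons, couple geometric waiting times to exponentials through a common clock, and control the event $\max_j|\widehat Z_j|>2\alpha n$ separately. The paper packages the ``jump of size $\ge 2$'' error slightly differently---it introduces auxiliary times $S_j$ and a marked skeleton $\widetilde X_j$ that is set to $*$ whenever $\delta_1+\delta_2\ge 2$, then bounds the total variation distance between $(\widetilde X_j)$ and $(\widehat Z_j)$ via a path-ratio argument rather than a stepwise maximal coupling---and it proves (c) via Poisson domination of the jump count plus Chebyshev rather than Chernoff, but these are cosmetic differences. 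Your error exponent $q^{-n(1-C\alpha)}$ is really $q^{-\min(d,n-d)}=q^{-(m-|a|)}\le q^{-n(1/2-2\alpha)}$, which is consistent with your ``some absolute $C$'' framing and makes the final union bound go through for $\alpha=0.08$.
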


Note that the events \eqref{couplingevent1}, \eqref{couplingevent2} and \eqref{couplingevent3} in Theorem~\ref{theoremcoupling0} directly translate to the events in Theorem~\ref{theoremcoupling}. However, Theorem~\ref{theoremcoupling0} contains the additional event~\eqref{couplingeventc} which cannot be expressed only in terms of the process $Y_t$. The significance of this event will be clear from \eqref{Dpasdown}.

The rest of the section is devoted to the proof Theorem~\ref{theoremcoupling0}.

Given two expressions $A$ and $B$, we write $A=O(B)$ if there is a constant $C$ such that $|A|\le CB$, and we write $A=O_{0\le }(B)$ if there is a constant $C$ such that $0\le A\le CB$. The constant $C$ may depend on $q$, but it cannot depend on any other variable involved in the expressions $A$ and $B$. 

The following lemma is a simple corollary of Taylor's theorem and the mean value theorem.

\begin{lemma}\label{taylor}\hfill
\begin{enumerate}[(a)]
    \item Assuming that $x=O(1)$, we have $\exp(x)=1+x+O_{0\le}(x^2)$.
    \item Assuming that $-\frac{1}2\le x\le O(1)$, we have \[\log(1+x)=x-O_{0\le}(x^2)\text{, or equivalently }1+x=\exp\left(x-O_{0\le}(x^2)\right).\]
    \item \label{partmean}For any $x$, we have $1-\exp(x)=-x\exp(O(|x|))$.
\end{enumerate}
\end{lemma}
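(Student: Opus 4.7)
The plan is to prove all three parts as direct corollaries of Taylor's theorem with an explicit remainder, paying attention to signs so that the $O_{0\le}$ notation is justified.

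For part (a), I would write $\exp(x) = 1 + x + \tfrac{x^2}{2}\exp(\xi)$ via Lagrange's remainder, where $\xi$ lies between $0$ and $x$. Since $x = O(1)$ gives $|\xi| \le |x| = O(1)$, the factor $\exp(\xi)$ is bounded by some constant depending only on the implicit constant in $x = O(1)$. Because $x^2 \ge 0$ and $\exp(\xi) > 0$, the remainder is non-negative, which is exactly what the $O_{0\le}(x^2)$ notation records.

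For part (b), the same strategy works with $f(x) = \log(1+x)$: Lagrange's remainder gives $\log(1+x) = x - \tfrac{x^2}{2(1+\xi)^2}$ for some $\xi$ between $0$ and $x$. The assumption $x \ge -1/2$ ensures $1+\xi \ge 1/2$, so $(1+\xi)^{-2} \le 4$, while $x = O(1)$ keeps $\xi$ bounded above. Hence the remainder has the form $-cx^2$ with $c \in [0, 2]$, which gives the claimed $-O_{0\le}(x^2)$ expression; exponentiating yields the equivalent multiplicative form. Part (c) is the cleanest: by the mean value theorem applied to $\exp$ on the interval between $0$ and $x$, there exists $\xi$ between $0$ and $x$ with $\exp(x) - \exp(0) = x\exp(\xi)$, so $1 - \exp(x) = -x\exp(\xi)$, and since $|\xi| \le |x|$ we may write $\exp(\xi) = \exp(O(|x|))$, with no restriction on $x$.

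There is essentially no obstacle here; the only minor point of care is bookkeeping the sign in parts (a) and (b) to justify the $O_{0\le}$ notation (as opposed to the weaker $O$), and ensuring in (b) that the lower bound $x \ge -1/2$ is used precisely to keep $1+\xi$ bounded away from zero. The whole lemma is a formal restatement of standard calculus facts in the $O$-notation used throughout the paper, intended as a convenient reference for later estimates.
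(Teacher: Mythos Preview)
Your proposal is correct and matches the paper's approach exactly: the paper does not give a detailed proof at all, merely stating that the lemma ``is a simple corollary of Taylor's theorem and the mean value theorem,'' which is precisely what you carry out with Lagrange remainders for (a) and (b) and the mean value theorem for (c). Your attention to the sign of the remainder to justify the $O_{0\le}$ notation and to the role of the hypothesis $x\ge -\tfrac12$ in part (b) is appropriate and complete.
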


\begin{lemma}
We have
\begin{align}
P_n(d,n-d)P_n(n-d,d)&=\exp\left(-\frac{q^{-d}+q^{-(n-d)}}{q-1}+O\left(q^{-2\min(d,n-d)}\right)\right),\label{approx1}\\[10pt]
\frac{P_n(d,n-d-1)P_n(n-d-1,d+1)}{1-P_n(d,n-d)P_n(n-d,d)}&=\frac{q^{-(d-m)}}{q^{d-m}+q^{-(d-m)}}\exp\left(O\left(q^{-\min(d,n-d)}\right)\right),\label{approx2}\\[20pt]
\frac{P_n(d,n-d)P_n(n-d,d-1)}{1-P_n(d,n-d)P_n(n-d,d)}&=\frac{q^{d-m}}{q^{d-m}+q^{-(d-m)}}\exp\left(O\left(q^{-\min(d,n-d)}\right)\right).\label{approx3}
\end{align}
In \eqref{approx2}, we assume that $d<n$. In \eqref{approx3}, we assume that $0<d$.
\end{lemma}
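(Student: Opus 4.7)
The plan is to compute every $P_n(d,r)$ appearing in the three identities in closed form by telescoping the $q$-Pochhammer symbols, then pass to logarithms and apply Lemma~\ref{taylor}.

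For~\eqref{approx1}, note that when $r=n-d$ the indicator is tight, the exponent $-(n-d-r)(n-r)$ vanishes, and the factor $(q^{-(n-d)};q)_{n-d}$ cancels with $(q^{-r};q)_r$, so $P_n(d,n-d)=(q^{-n};q)_{n-d}$; analogously $P_n(n-d,d)=(q^{-n};q)_{d}$. Rewriting these products as $\prod_{j=d+1}^{n}(1-q^{-j})$ and $\prod_{j=n-d+1}^{n}(1-q^{-j})$, I would take logarithms using Lemma~\ref{taylor}(b), evaluate the geometric sum $\sum_{j=\ell+1}^{\infty}q^{-j}=\frac{q^{-\ell}}{q-1}$, and absorb both the tail past $j=n$ and the quadratic remainders into a single $O(q^{-2\min(d,n-d)})$ term.

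For~\eqref{approx2} and~\eqref{approx3}, the same telescoping gives
\[P_n(d,n-d-1)=q^{-(d+1)}\cdot\frac{1-q^{-(n-d)}}{1-q^{-1}}\cdot(q^{-n};q)_{n-d-1},\qquad P_n(n-d-1,d+1)=(q^{-n};q)_{d+1},\]
together with the analogous formula for $P_n(n-d,d-1)$. The crucial observation is that the products $(q^{-n};q)_{n-d-1}(q^{-n};q)_{d+1}$ and $(q^{-n};q)_{n-d}(q^{-n};q)_{d-1}$ differ from $(q^{-n};q)_{n-d}(q^{-n};q)_{d}$ only by single factors of the form $(1-q^{-k})^{\pm 1}$ with $k\geq\min(d,n-d)+1$, each contributing only a multiplicative $\exp(O(q^{-\min(d,n-d)}))$. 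Meanwhile, Lemma~\ref{taylor}(c) combined with~\eqref{approx1} yields
\[1-P_n(d,n-d)P_n(n-d,d)=\frac{q^{-d}+q^{-(n-d)}}{q-1}\exp\bigl(O(q^{-\min(d,n-d)})\bigr),\]
since the additive $O(q^{-2\min(d,n-d)})$ sitting inside the exponential is a factor of $q^{-\min(d,n-d)}$ smaller than the leading term $\frac{q^{-d}+q^{-(n-d)}}{q-1}$.

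Assembling the pieces, the deterministic prefactors combine via $(q-1)/(1-q^{-1})=q$ to produce $\frac{q^{-d}}{q^{-d}+q^{-(n-d)}}$ and $\frac{q^{-(n-d)}}{q^{-d}+q^{-(n-d)}}$ respectively, with all remaining $(1-q^{-k})^{\pm 1}$ contributions absorbed into $\exp(O(q^{-\min(d,n-d)}))$. Substituting $s=d-m$ with $m=n/2$ rewrites these main terms as $q^{-s}/(q^s+q^{-s})$ and $q^s/(q^s+q^{-s})$, matching the stated form. I expect the main technical obstacle to be disciplined bookkeeping: each surviving factor $(1-q^{-j})^{\pm 1}$ must carry $j$ at least on the order of $\min(d,n-d)$ so that the accumulated multiplicative error stays inside $\exp(O(q^{-\min(d,n-d)}))$ and does not blow up to $\exp(O(1))$. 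This requires tracking, for each Pochhammer symbol introduced by the closed-form expressions, exactly which boundary factor survives cancellation.
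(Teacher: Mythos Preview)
Your proposal is correct and follows essentially the same approach as the paper: both compute the relevant $P_n(d,r)$ in closed form by simplifying the $q$-Pochhammer symbols, pass to logarithms via Lemma~\ref{taylor} to get~\eqref{approx1}, extract the denominator $1-P_n(d,n-d)P_n(n-d,d)$ using part~(c) of that lemma, and then assemble the ratios. The only difference is organizational---the paper estimates $P_n(d,n-d-1)$ and $P_n(n-d-1,d+1)$ separately (the latter as an instance of the estimate for $P_n(d,n-d)$), whereas you multiply them first and compare the resulting product $(q^{-n};q)_{n-d-1}(q^{-n};q)_{d+1}$ to $(q^{-n};q)_{n-d}(q^{-n};q)_{d}$ via the boundary factors $(1-q^{-(d+1)})^{-1}(1-q^{-(n-d)})$; this is the same computation packaged slightly differently.
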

\begin{proof}

Using Lemma~\ref{taylor}, we see that
\begin{align}
P_n(d,n-d)&=\prod_{i=0}^{n-d-1}\left(1-q^{-(n-i)}\right)\label{appr2}\\
&=\exp\left(-\sum_{i=0}^{n-d-1} \left(q^{-(n-i)}+O_{0\le}\left(q^{-2(n-i)}\right)\right)\right)\nonumber\\&=\exp\left(-q^{-n}\frac{q^{n-d}-1}{q-1}-O_{0\le}\left(q^{-2d}\right)\right)\nonumber\\&=\exp\left(-\frac{q^{-d}}{q-1}-O_{0\le}\left(q^{-2d}\right)+O\left(q^{-n}\right)\right)\nonumber\\&=\exp\left(O\left(q^{-d}\right)\right)\nonumber.
\end{align}

Thus,
\begin{equation}\label{appr25}
P_n(d,n-d)P_n(n-d,d)=\exp\left(-\frac{q^{-d}+q^{-(n-d)}}{q-1}-O_{0\le}\left(q^{-2\min(d,n-d)}\right)+O(q^{-n})\right).
\end{equation}

Equation~\eqref{approx1} follows from \eqref{appr25} and the fact that $n\ge 2\min(d,n-d)$.

Combining~\eqref{appr25} and part \eqref{partmean} of Lemma~\ref{taylor}, we see that
\begin{align}
1-&P_n(d,n-d)P_n(n-d,d)\label{appr29}\\&=1-\exp\left(-\frac{q^{-d}+q^{-(n-d)}}{q-1}-O_{0\le}\left(q^{-2\min(d,n-d)}\right)+O(q^{-n})\right)\nonumber\\&=\left(\frac{q^{-d}+q^{-(n-d)}}{q-1}+O_{0\le}\left(q^{-2\min(d,n-d)}\right)+O(q^{-n})\right)\exp\left(O\left(q^{-\min(d,n-d)}\right)\right).\nonumber
\end{align}

It is easy to see that if we want to prove \eqref{approx2} and \eqref{approx3}, we may assume that $n$ is large enough. If $n$ is large enough, then an expression of the form $O_{0\le}\left(q^{-\min(d,n-d)}\right)+O(q^{-n+\min(d,n-d)})$ is at least $-\frac{1}2$. Thus, Lemma~\ref{taylor} can be applied to give that

\begin{align*}
  \frac{q^{-d}+q^{-(n-d)}}{q-1}&+O_{0\le}\left(q^{-2\min(d,n-d)}\right)+O(q^{-n})\\&=\frac{q^{-d}+q^{-(n-d)}}{q-1}\left(1+O_{0\le}\left(q^{-\min(d,n-d)}\right)+O\left(q^{-n+\min(d,n-d)}\right)\right)\\&= \frac{q^{-d}+q^{-(n-d)}}{q-1}\exp\left(O\left(q^{-\min(d,n-d)}\right)\right).
\end{align*}

Combining this with \eqref{appr29}, we obtain that

 \begin{equation}
 1-P_n(d,n-d)P_n(n-d,d)=\frac{q^{-d}+q^{-(n-d)}}{q-1}\exp\left(O\left( q^{-\min(d,n-d)}\right)\right).\label{appr3}
 \end{equation}

Furthermore, for $d<n$, we have
\begin{align}
P_n(d,n-d-1)&=\frac{q^{-d}}{q-1}\left(1-q^{-(n-d)}\right)\prod_{i=0}^{n-d-2} \left(1-q^{-n+i}\right)\label{appr1}\\
&=\frac{q^{-d}}{q-1}\exp\left(O\left(q^{-(n-d)}\right)\right)\exp\left(\sum_{i=0}^{n-d+1}O\left(q^{-(n-i)}\right)\right)\nonumber\\&=\frac{q^{-d}}{q-1}\exp\left(O\left(q^{-\min(d,n-d)}\right)\right)\nonumber.
\end{align}

Combining \eqref{appr2}, \eqref{appr3} and~\eqref{appr1}, we obtain~\eqref{approx2} and~\eqref{approx3}.
\end{proof}






Instead of $Q_i$ and $\widehat{X}_i$ it will be more convenient to consider $S_i$ and $\widetilde{X}_i$ defined below. As it turns out later, $Q_i=S_i$ and $\widehat{X}_i=\widetilde{X}_i$ with high probability for all small enough $i$.

Let $S_0=0$, and
\[S_{j+1}=\min\{i>S_j\,:\, X_{2i-2}+X_{2i-1}\neq n\text{ or } X_{2i-1}+X_{2i}\neq n\}.\]

We also define $\widetilde{X}_0=X_0-m$ and for $j>0$,
\[\widetilde{X}_j=\begin{cases}
X_{2S_j}-m&\text{if }(n-X_{2S_j-2}-X_{2S_j-1})+(n-X_{2S_j-1}-X_{2S_j})=1,\\
*&\text{otherwise.}
\end{cases}\]
Here $*$ is a symbol distinct from the elements of $\mathbb{Z}$. Note that if $\widetilde{X}_j\neq *$ and $\widetilde{X}_{j+1}\neq *$, then $|\widetilde{X}_{j+1}-\widetilde{X}_j|=1$.

\begin{lemma}\label{skeletoncoupling}
The total variation distance of $(\widehat{Z}_{j})_{j=0}^{q^{2\alpha n}}$ and $(\widetilde{X}_{j})_{j=0}^{q^{2\alpha n}}$ is at most $O\left(q^{-2\alpha n}\right)$.
\end{lemma}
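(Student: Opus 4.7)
My plan is to construct a step-by-step maximal coupling of $\widetilde X$ and $\widehat Z$, using the asymptotics \eqref{approx1}--\eqref{approx3} to bound the per-step total variation distance of the transition laws, and a Foster--Lyapunov estimate to control $|\widehat Z_j|$ over $N:=\lceil q^{2\alpha n}\rceil$ steps.

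First I would identify the transition law of $\widetilde X$. Since $S_j$ is a stopping time for $(X_i)$ (Markov by Lemma~\ref{lemmaMarkov}), and $\widetilde X_j=a\ne *$ fixes $X_{2S_j}=d:=a+m$, the process $\widetilde X$ restricted to $\mathbb{Z}$ is itself Markov, and the law of $\widetilde X_{j+1}$ given $\widetilde X_j=a$ is supported on $\{a-1,\,a+1,\,*\}$. Using $\min(d,n-d)=m-|a|$ together with \eqref{approx2} and \eqref{approx3},
\begin{align*}
  \mathbb{P}(\widetilde X_{j+1}=a\pm 1\mid \widetilde X_j=a) &= \frac{q^{\mp a}}{q^a+q^{-a}}\bigl(1+O(q^{|a|-m})\bigr),\\
  \mathbb{P}(\widetilde X_{j+1}=*\mid \widetilde X_j=a) &= O(q^{|a|-m}).
\end{align*}
Since $\widehat Z$ jumps from $a$ to $a\pm 1$ with probabilities exactly $\tfrac{q^{\mp a}}{q^a+q^{-a}}$, the per-step total variation distance satisfies
\[\epsilon(a):=d_{TV}\bigl(\widetilde X_{j+1}\mid\widetilde X_j=a,\ \widehat Z_{j+1}\mid \widehat Z_j=a\bigr)=O(q^{|a|-m}),\]
with an implicit constant depending only on $q$.

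Second, I construct the coupling inductively. Starting from $\widetilde X_0=\widehat Z_0=a$, whenever $\widetilde X_j=\widehat Z_j\ne *$ I apply the maximal coupling to the two next-step laws, so they disagree with conditional probability at most $\epsilon(\widehat Z_j)$; once they disagree or $\widetilde X_j=*$, I let them run independently with the correct marginal laws. Let $\tau$ be the first time the coupling fails. Then the standard union bound yields
\[\mathbb{P}(\tau\le N)\le \sum_{j=0}^{N-1}\mathbb{E}\bigl[\epsilon(\widehat Z_j)\,\mathbf{1}(\tau>j)\bigr]\le C\,q^{-m}\sum_{j=0}^{N-1}\mathbb{E}\bigl[q^{|\widehat Z_j|}\bigr].\]

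Third, I control $\mathbb{E}[q^{|\widehat Z_j|}]$ via Foster--Lyapunov with $f(a)=q^{|a|}$. A direct computation gives, for $|a|\ge 1$,
\[\mathbb{E}\bigl[q^{|\widehat Z_{j+1}|}\,\big|\,\widehat Z_j=a\bigr]=\frac{q+q^{2|a|-1}}{1+q^{2|a|}}\,f(a)\le \gamma\,f(a),\]
with $\gamma:=\tfrac{2q}{1+q^2}<1$ (the ratio is decreasing in $|a|$, so its supremum over $|a|\ge 1$ is attained at $|a|=1$), while for $a=0$ the same conditional expectation equals $q$. Hence $\mathbb{E}[f(\widehat Z_{j+1})]\le \gamma\,\mathbb{E}[f(\widehat Z_j)]+q$, and iterating gives $\mathbb{E}[q^{|\widehat Z_j|}]\le q^{|a_0|}+q/(1-\gamma)\le q^{2\alpha n}+O(1)$ uniformly in $j$. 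Plugging this back with $m=n/2$, $|a_0|\le 2\alpha n$ and $N\le q^{2\alpha n}$,
\[\mathbb{P}(\tau\le N)\le C\,q^{-n/2}\cdot q^{2\alpha n}\cdot\bigl(q^{2\alpha n}+O(1)\bigr)=O\bigl(q^{(8\alpha-1)n/2}\bigr)=O(q^{-0.18\,n})=O(q^{-2\alpha n}),\]
using $\alpha=0.08$. Since the joint law of $(\widetilde X_j,\widehat Z_j)_{j\le N}$ makes the two coordinates coincide on $\{\tau>N\}$, the total variation distance between the two marginal laws is at most $\mathbb{P}(\tau\le N)$, which is the claim. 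The main technical obstacle I expect is the careful conversion of the multiplicative error $\exp(O(q^{-\min(d,n-d)}))$ in \eqref{approx2}--\eqref{approx3} into an additive bound on transition probabilities, and checking that the signs fit so that $\mathbb{P}(\widetilde X_{j+1}=*)\ge 0$; once this is done, both the coupling argument and the Lyapunov bound are routine.
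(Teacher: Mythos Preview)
Your argument is correct. The paper's proof proceeds differently: rather than building a step-by-step maximal coupling, it compares path probabilities directly, showing
\[
\mathbb{P}(\widetilde X_j=a_j,\ 0\le j\le \ell)=\mathbb{P}(\widehat Z_j=a_j,\ 0\le j\le \ell)\exp\Bigl(\ell\cdot O\bigl(q^{\max_j|a_j|-m}\bigr)\Bigr),
\]
and then bounds the total variation distance by $\mathbb{E}\min\bigl(O(q^{2\alpha n+\max_j|\widehat Z_j|-m}),1\bigr)$, which it splits according to whether $\max_{j\le q^{2\alpha n}}|\widehat Z_j|$ exceeds $2\alpha n$; the probability of exceeding is handled by a union bound over boundary crossings of $\widehat Z$, yielding $O(q^{-2\alpha n})$, and on the complementary event the density ratio is $1+O(q^{(4\alpha-0.5)n})$. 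Your route replaces this ``max'' control by an expectation control: the Foster--Lyapunov bound $\mathbb{E}[q^{|\widehat Z_j|}]\le q^{|a_0|}+O(1)$ feeds directly into the summed per-step failure probability, and you arrive at the same arithmetic constraint $4\alpha<\tfrac12$ (in fact the identical exponent $(4\alpha-\tfrac12)n$). Both approaches rest on the same transition-probability asymptotics; yours is arguably cleaner in that the Lyapunov bound avoids the two-case split and the separate boundary-crossing estimate, while the paper's likelihood-ratio computation makes the structure of the error more transparent and would adapt more readily if one needed sharper control of the Radon--Nikodym derivative itself.
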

\begin{proof}
Using \eqref{approx2} and \eqref{approx3}, we see that for all $a_1,a_2\in\mathbb{Z}$ such that $|a_1-a_2|=1$, we have
\[\mathbb{P}(\widetilde{X}_{j+1}=a_2\,|\, \widetilde{X}_{j}=a_1)=\mathbb{P}(\widehat{Z}_{j+1}=a_2\,|\, \widehat{Z}_{j}=a_1)\exp\left(O\left(q^{|a_1|-m}\right)\right).\]

Thus, given a sequence $a=a_0,a_1,\dots,a_\ell$, we see that
\[\mathbb{P}(\widetilde{X}_{j}=a_j\text{ for all }j\le \ell)=\mathbb{P}(\widehat{Z}_{j}=a_j\text{ for all }j\le \ell)\exp\left(\ell(O\left(q^{\max|a_j|-m}\right) \right).\]

Therefore, it follows that
\begin{multline*}\mathbb{P}(\widehat{Z}_{j}=a_j\text{ for all }j\le \ell)-\mathbb{P}(\widetilde{X}_{j}=a_j\text{ for all }j\le \ell)\\\le\mathbb{P}(\widehat{Z}_{j}=a_j\text{ for all }j\le \ell) \min(O\left(\ell q^{\max|a_j|-m}\right),1).
\end{multline*}

Recall the general fact that if $\nu$ and $\mu$ are two probability measures on a discrete set $\mathcal{X}$, then
\[\dTV(\nu,\mu)=\sum_{x\in \mathcal{X}} |\nu(x)-\mu(x)|_+=\sum_{\substack{x\in \mathcal{X}\\\nu(x)>0}} \nu(x)\frac{|\nu(x)-\mu(x)|_+}{\nu(x)}.\]

Thus, the total variation distance of $(\widehat{Z}_{j})_{j=0}^{q^{2\alpha n}}$ and $(\widetilde{X}_{j})_{j=0}^{q^{2\alpha n}}$ is bounded by
\begin{equation}\label{coupbound}
 \mathbb{E} \min\left(O\left( q^{2\alpha n+\max_{j=0}^{q^{2\alpha n}}|\widehat{Z}_{j}|-m}\right),1\right)\le \mathbb{P}\left(\max_{j=0}^{q^{2\alpha n}}|\widehat{Z}_{j}|>2\alpha n\right)+O\left( q^{(4\alpha-0.5) n}\right).\end{equation}

Here
\begin{align}\label{farfrom0bound}
\mathbb{P}\left(\max_{j=0}^{q^{2\alpha n}}|\widehat{Z}_{j}|>2\alpha n\right)&\le \sum_{j=1}^{q^{2\alpha n}} \mathbb{P}\left(|\widehat{Z}_{j-1}|=\lfloor 2\alpha n\rfloor, |\widehat{Z}_{j}|=\lfloor 2\alpha n\rfloor+1 \right)\\&\le \sum_{j=1}^{q^{2\alpha n}} \mathbb{P}\left( |\widehat{Z}_{j}|=\lfloor 2\alpha n\rfloor +1\,\Big|\,|\widehat{Z}_{j-1}|=\lfloor 2\alpha n\rfloor\right)\nonumber \\&=q^{2\alpha n} O\left(q^{-4\alpha n}\right)=O\left(q^{-2\alpha n}\right).\nonumber
\end{align}

Since $4\alpha-0.5<-2\alpha$, combining \eqref{coupbound} and \eqref{farfrom0bound} the statement follows.
\end{proof}
\begin{lemma}\label{geomvsexp}
Let $G$ be a geometric random variable such that
\[\mathbb{P}(G\ge k)=(P_n(d,n-d)P_n(n-d,d))^{k-1}\]
and let $F$ be an exponential random variable such that
\[\mathbb{P}(F\ge t)=\exp\left(-t\left(q^{-(d-m)}+q^{d-m}\right)\right).\]
Then there is a coupling of $G$ and $F$ such that
\[\left|\frac{q^{-m}}{q-1} G-F\right|\le O\left(F q^{-\min(d,n-d)}\right)+O\left(q^{-m}\right).\]

Consequently, provided that $|m-d|\le 2\alpha n$, we have
\[\mathbb{P}\left(\left|\frac{q^{-m}}{q-1} G-F\right|>q^{-3\alpha n}\right)\le \exp\left(-\Omega\left(q^{(0.5-5\alpha)n}\right)\right).\]

\end{lemma}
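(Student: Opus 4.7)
The plan is to build both $G$ and $F$ from a single uniform random variable $U\in(0,1]$ via the quantile/inverse-CDF recipe. Writing $p = P_n(d,n-d)P_n(n-d,d)$ and $\lambda = q^{-(d-m)}+q^{d-m}$, set
\[G = 1 + \lfloor \log U/\log p\rfloor, \qquad F = -\log(U)/\lambda.\]
One checks immediately that $\mathbb{P}(G\ge k) = \mathbb{P}(U\le p^{k-1}) = p^{k-1}$ and $\mathbb{P}(F\ge t) = e^{-\lambda t}$, so the marginals are correct. The floor operation costs at most $1$, so
\[\frac{q^{-m}}{q-1}G = \frac{q^{-m}}{q-1}\cdot\frac{\log U}{\log p} + O(q^{-m}) = F\cdot\frac{q^{-m}\lambda}{(q-1)(-\log p)} + O(q^{-m}).\]

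The crux is to show $\frac{q^{-m}\lambda}{(q-1)(-\log p)} = 1 + O(q^{-\min(d,n-d)})$. Since $n=2m$, a direct calculation gives $\lambda = q^m\bigl(q^{-d}+q^{d-2m}\bigr)$, and the approximation \eqref{approx1} yields
\[-\log p = \frac{q^{-d}+q^{d-2m}}{q-1} + O\!\left(q^{-2\min(d,n-d)}\right) = \frac{q^{-d}+q^{d-2m}}{q-1}\cdot\bigl(1+O(q^{-\min(d,n-d)})\bigr),\]
where the last step uses that $q^{-d}+q^{d-2m} = \Theta(q^{-\min(d,n-d)})$. Substituting gives
\[\frac{q^{-m}\lambda}{(q-1)(-\log p)} = \frac{q^{-d}+q^{d-2m}}{(q^{-d}+q^{d-2m})\bigl(1+O(q^{-\min(d,n-d)})\bigr)} = 1 + O\!\left(q^{-\min(d,n-d)}\right),\]
and therefore
\[\left|\frac{q^{-m}}{q-1}G - F\right| = F\cdot O\!\left(q^{-\min(d,n-d)}\right) + O(q^{-m}),\]
which is the first conclusion.

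For the tail bound, the hypothesis $|m-d|\le 2\alpha n$ gives $\min(d,n-d)\ge (0.5-2\alpha)n$. Since $\alpha=0.08$, we have $q^{-m}=q^{-0.5 n}\ll q^{-3\alpha n}$ for all large $n$, so the error term $O(q^{-m})$ is eventually smaller than $\tfrac12 q^{-3\alpha n}$. Hence the event $\bigl|\tfrac{q^{-m}}{q-1}G - F\bigr|>q^{-3\alpha n}$ forces $F = \Omega\bigl(q^{(0.5-5\alpha)n}\bigr)$. Combining this with $\lambda\ge 2$ and the exponential tail $\mathbb{P}(F>s) = e^{-\lambda s}$ gives the claimed $\exp(-\Omega(q^{(0.5-5\alpha)n}))$ bound.

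The only slightly delicate step is the algebraic check that $\lambda$ and $(q-1)(-\log p)/q^{-m}$ agree to leading order; everything else is bookkeeping with the error terms from \eqref{approx1} and the coarse tail of an exponential. The main conceptual obstacle is simply noticing that $\lambda$ was essentially \emph{defined} so that this cancellation happens, which is exactly why the continuous-time rates of $Z_t$ are $q^{\pm(d-m)}$ in the first place.
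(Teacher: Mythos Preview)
Your proof is correct and follows essentially the same route as the paper: both build $G$ and $F$ from a single uniform $U$ via the inverse-CDF construction, then use \eqref{approx1} to show that the scaled geometric and the exponential agree up to a multiplicative error of order $q^{-\min(d,n-d)}$ plus an additive $O(q^{-m})$ from the floor. The only cosmetic difference is that you compute the ratio $\frac{q^{-m}\lambda}{(q-1)(-\log p)}$ directly, whereas the paper sandwiches $F$ between $\frac{q^{-m}}{q-1}(G-1)(1+O(\cdot))$ and $\frac{q^{-m}}{q-1}G(1+O(\cdot))$; these are equivalent rearrangements of the same estimate.
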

\begin{proof}
 We construct a coupling as follows: Let $U$ be a uniform random element of $[0,1]$, and let
 \[F=\frac{-\log U}{q^{-(d-m)}+q^{d-m}}.\]
 Furthermore, let 
 \[G=\min\{k\in \mathbb{N}\,:\,(P_n(d,n-d)P_n(n-d,d))^{k}\le U\}.\]
 It is easy to see that this way we get a coupling of $G$ and $F$.

 Also, since
 \[(G-1)\log(P_n(d,n-d)P_n(n-d,d))>\log U\ge G\log(P_n(d,n-d)P_n(n-d,d)),\]
 we have
 \[(G-1)\log(P_n(d,n-d)P_n(n-d,d))>-F\left(q^{-(d-m)}+q^{d-m}\right)\ge G\log(P_n(d,n-d)P_n(n-d,d)).\]
 Thus by \eqref{approx1},
 \begin{multline*}
  (G-1)\left(\frac{q^{-d}+q^{-(n-d)}}{q-1}+O\left(q^{-2\min(d,n-d)}\right)\right)\\<F\left(q^{-(d-m)}+q^{d-m}\right)\\\le G\left(\frac{q^{-d}+q^{-(n-d)}}{q-1}+O\left(q^{-2\min(d,n-d)}\right)\right).
 \end{multline*}
 Therefore,
 \[\frac{q^{-m}}{q-1}(G-1)\left(1+O\left(q^{-\min(d,n-d)}\right)\right)<F\le \frac{q^{-m}}{q-1}G\left(1+O\left(q^{-\min(d,n-d)}\right)\right).\]

 Thus,
 \[\left|\frac{q^{-m}}{q-1} G-F\right|= O\left(G q^{-m} q^{-\min(d,n-d)}\right)+O\left(q^{-m}\right)\le O\left(F q^{-\min(d,n-d)}\right)+O\left(q^{-m}\right) .\]

 Let $\varepsilon>0$ be sufficiently small. If $|d-m|\le 2\alpha n$, then $\min(d,n-d)\ge (0.5-2\alpha)n$. On the event that $F\le \varepsilon q^{(0.5-5\alpha)n}$, we have
\[\left|\frac{q^{-m}}{q-1} G-F\right|\le O\left(\varepsilon q^{(0.5-5\alpha)n-(0.5-2\alpha)n}+q^{-m}\right)\le q^{-3\alpha n}\]
provided that $\varepsilon$ sufficiently small and $n$ is large enough.

Thus,
\[\mathbb{P}\left(\left|\frac{q^{-m}}{q-1} G-F\right|>q^{-3\alpha n}\right)\le \mathbb{P}(F\ge \varepsilon q^{(0.5-5\alpha)n})\le \exp\left(-\Omega\left(q^{
(0.5-5\alpha)n}\right)\right).\qedhere\]
\end{proof}

\begin{lemma}\label{timecoupling}
Let $a=a_0,a_1,\dots,a_{q^{2\alpha n}}$ be a sequence such that $|a_i-a_{i-1}|=1$ for all $i$, and $|a_i|\le 2\alpha n$ for all $i$. Let $(S_0',S_1',\dots,S_{q^{2\alpha n}}')$ be distributed the same way as $(S_0,S_1,\dots,S_{q^{2\alpha n}})$ conditioned on the event that $\widetilde{X}_j=a_j$ for all $j$. Let $(T_0',T_1',\dots,T_{q^{2\alpha n}}')$ be distributed the same way as $(T_0,T_1,\dots,T_{q^{2\alpha n}})$ conditioned on the event that $\widehat{Z}_j=a_j$ for all $j$. Then there is a coupling of $(S_0',S_1',\dots,S_{q^{2\alpha n}}')$ and $(T_0',T_1',\dots,T_{q^{2\alpha n}}')$ such that with probability at least $1-O\left(q^{-2\alpha n}\right)$, we have $\left|\frac{q^{-m}}{q-1}S_i'-T_i'\right|\le q^{-n\alpha}$ for all $i$.
\end{lemma}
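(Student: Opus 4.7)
The plan is to condition on the common skeleton $\widetilde{X}_j = \widehat{Z}_j = a_j$ and then couple the $q^{2\alpha n}$ consecutive waiting times between jumps independently across $j$, using Lemma~\ref{geomvsexp} for each individual pair.

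First I would identify the conditional distributions of the waiting times on the two sides. On the $Z$-side, standard continuous-time Markov chain theory gives that conditional on $\widehat{Z}_j = a_j$ for all $j\le q^{2\alpha n}$, the increments $T_j' - T_{j-1}'$ are independent exponential random variables with rate $q^{-a_{j-1}} + q^{a_{j-1}}$. On the $X$-side, applying the Markov property of $(X_i)$ at the break times $S_j$, together with the explicit form of $P_n$ and the fact that ``no break occurs at step $i$'' has probability exactly $P_n(d,n-d)P_n(n-d,d)$ at state $d = a_{j-1} + m$, shows that conditional on the whole sequence $\widetilde{X}_j = a_j$ the increments $S_j' - S_{j-1}'$ are independent geometrics with $\mathbb{P}(S_j' - S_{j-1}' \ge k) = (P_n(d_{j-1},n-d_{j-1}) P_n(n-d_{j-1},d_{j-1}))^{k-1}$. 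These are exactly the geometric/exponential pairs appearing in Lemma~\ref{geomvsexp}.

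Next I would apply Lemma~\ref{geomvsexp} independently for each $j = 1,\dots,q^{2\alpha n}$; the hypothesis $|a_{j-1}| \le 2\alpha n$ translates into $|d_{j-1} - m| \le 2\alpha n$, so the lemma applies. Each application produces a coupling of the $j$-th pair of waiting times with per-step error at most $q^{-3\alpha n}$ and failure probability at most $\exp(-\Omega(q^{(0.5 - 5\alpha)n}))$. Since $\alpha = 0.08$ makes $0.5 - 5\alpha = 0.1 > 0$, a union bound over the $q^{2\alpha n}$ pairs gives that all couplings succeed simultaneously with probability $1 - q^{2\alpha n}\exp(-\Omega(q^{0.1 n})) = 1 - O(q^{-2\alpha n})$. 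On that event, telescoping the per-step bounds gives, for every $i\le q^{2\alpha n}$,
\[\left|\frac{q^{-m}}{q-1}S_i' - T_i'\right| \le \sum_{j=1}^{i}\left|\frac{q^{-m}}{q-1}(S_j' - S_{j-1}') - (T_j' - T_{j-1}')\right| \le q^{2\alpha n} \cdot q^{-3\alpha n} = q^{-\alpha n},\]
which is the desired estimate.

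The main obstacle I anticipate is the careful verification of the conditional factorization at the break times $S_j$. The event $\widetilde{X}_j = a_j$ encodes not only the value of $X_{2 S_j}$ but also the additional requirement that exactly one of the two increments at the break step deviates from the identity pattern $X_{i-1} + X_i = n$; one has to check that after conditioning on this the subsequent waiting times remain independent geometrics with precisely the parameter $P_n(d,n-d)P_n(n-d,d)$. Once this factorization is established, the rest of the argument is a mechanical assembly of Lemma~\ref{geomvsexp} with the union bound above.
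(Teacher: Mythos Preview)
Your proposal is correct and follows essentially the same approach as the paper: couple each pair of waiting times using Lemma~\ref{geomvsexp} with $d=a_{j-1}+m$, take a union bound over the $q^{2\alpha n}$ indices, and telescope the per-step error $q^{-3\alpha n}$ to obtain $q^{-\alpha n}$. The paper's proof is in fact more terse than yours---it simply asserts the coupling $S_i'=\sum G_j$, $T_i'=\sum F_j$ without discussing the conditional factorization you flag as a potential obstacle---so your additional remarks on why the conditional increments are independent geometrics and exponentials are a welcome elaboration rather than a deviation.
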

\begin{proof}
Let $(F_i,G_i)$ be a coupling provided by Lemma~\ref{geomvsexp} with the choice $d=a_i+m$. We can also ensure that $(F_i,G_i)$ ($i=0,1,\dots,q^{2\alpha n}-1$) are independent. Then $S_i'=\sum_{j=0}^{i-1} G_j$ and $T_i'=\sum_{j=0}^{i-1} F_j$ gives us a coupling of $(S_0',S_1',\dots,S_{q^{2\alpha n}}')$ and $(T_0',T_1',\dots,T_{q^{2\alpha n}}')$. On the event that $\left|\frac{q^{-m}}{q-1} G_i-F_i\right|\le q^{-3n\alpha}$ for all $i$, we have $|\frac{q^{-m}}{q-1}S_i'-T_i'|\le q^{-n\alpha}$. The probability of that $\left|\frac{q^{-m}}{q-1} G_i-F_i\right|> q^{-3n\alpha}$ for some $i$ is at most $q^{2\alpha n}\exp\left(-\Omega\left(q^{(0.5-5\alpha)n}\right)\right)=O\left(q^{-2\alpha n}\right)$ with a very rough estimate.
\end{proof}

Combining Lemma~\ref{skeletoncoupling}, \eqref{farfrom0bound} and Lemma~\ref{timecoupling}, we can obtain a coupling of $X$ and $Z$ such that with probability at least $1-O\left(q^{-2\alpha n}\right)$, we have
\begin{enumerate}[(a)]
\item For all $0\le i\le q^{2\alpha n}$, we have $\widetilde{X}_i=\widehat{Z}_i$;\label{eventa}
\item For all $0\le i\le q^{2\alpha n}$, we have $\left|\frac{q^{-m}}{q-1}S_i-T_i\right|\le q^{-\alpha n}$;
\end{enumerate}

Note that on the event \eqref{eventa}, we have 
\[ |n-X_{2i-2}-X_{2i-1}|+|n-X_{2i-1}-X_{2i}|\le 1 \text{ for all $1\le i\le S_{\lfloor q^{2n\alpha}\rfloor}$,}\]
and $Q_i=S_i$ for all $0\le i\le q^{2\alpha n}$, so:
\[\text{For all $0\le i\le q^{2\alpha n}$, we have $\left|\frac{q^{-m}}{q-1}Q_i-T_i\right|\le q^{-\alpha n}$.}\]

Thus, to prove Theorem~\ref{theoremcoupling0} the only thing left to be proved is the following lemma:
\begin{lemma}
Provided that $|Z_0|\le 2\alpha n$, we have
\[\mathbb{P}(T_{\lfloor q^{2\alpha n} \rfloor}<q^{\alpha n})=O\left(q^{-3\alpha n}\right).\]
\end{lemma}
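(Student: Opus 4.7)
The plan is to prove a much stronger statement than the claim, namely that $T_{\lfloor q^{2\alpha n}\rfloor}\ge\Omega(q^{2\alpha n})$ with probability $1-e^{-\Omega(q^{2\alpha n})}$. The underlying mechanism is that $Z_t$ is positive recurrent with stationary distribution $\pi(a)\propto q^{-a^2}$ concentrated around $0$, where the jump rate $q^0+q^{-0}=2$ is minimal. Each visit of the embedded chain $\widehat Z$ to $0$ therefore contributes an independent exponential holding time of rate $2$, so once we know that a linear (in $q^{2\alpha n}$) fraction of the first $\lfloor q^{2\alpha n}\rfloor$ steps of $\widehat Z$ sit at $0$, the accumulated holding times at $0$ alone will exceed $q^{\alpha n}$.

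To count the visits to $0$, I would first show that the return time $\tau$ of $\widehat Z$ to $0$ has exponential tails. From any state $a\ne 0$, the embedded chain moves strictly toward $0$ with probability $q^{2|a|}/(q^{2|a|}+1)\ge 1-q^{-2}$, a geometric bias uniform in $|a|$. Stochastically dominating an excursion by a biased random walk on $\mathbb{Z}_{\ge 0}$ with fixed downward probability $1-q^{-2}$, one obtains $\mathbb{P}(\tau\ge k)\le C e^{-ck}$ for some constants $c,C>0$ depending only on $q$; in particular $\widehat\pi(0):=1/\mathbb{E}\tau$ is a positive constant. A similar bound (union bound over at most $2\alpha n$ ``wrong'' steps) shows that from $|\widehat Z_0|\le 2\alpha n$ the chain reaches $0$ within $\lfloor 4\alpha n\rfloor$ steps, with failure probability $e^{-\Omega(n)}$. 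After the first visit to $0$, successive return times are i.i.d.\ with exponential moments; a Bernstein-type bound gives that the sum of $K$ such return times is at most $2K/\widehat\pi(0)$ with probability $1-e^{-\Omega(K)}$. Taking $K=\lfloor\widehat\pi(0)q^{2\alpha n}/4\rfloor$ and using that $4\alpha n\ll q^{2\alpha n}$, we obtain that the number of visits $V:=\#\{0\le i<\lfloor q^{2\alpha n}\rfloor:\widehat Z_i=0\}$ satisfies $V\ge K$ with probability $1-e^{-\Omega(q^{2\alpha n})}$.

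Conditionally on $\widehat Z$, the holding times $H_i$ are independent with $H_i\mid\widehat Z_i\sim\mathrm{Exp}(q^{\widehat Z_i}+q^{-\widehat Z_i})$; in particular the $V$ of them corresponding to $\widehat Z_i=0$ are i.i.d.\ $\mathrm{Exp}(2)$, so their sum is a Gamma variable with shape $V$ and rate $2$, and
\[T_{\lfloor q^{2\alpha n}\rfloor}\;\ge\;\sum_{\substack{0\le i<\lfloor q^{2\alpha n}\rfloor\\\widehat Z_i=0}}H_i.\]
By Chernoff the right-hand side is at least $V/4$ except with conditional probability $e^{-\Omega(V)}$. On the high-probability event $V\ge K$ this yields $T_{\lfloor q^{2\alpha n}\rfloor}\ge K/4=\Omega(q^{2\alpha n})\gg q^{\alpha n}$, and collecting errors the failure probability is $e^{-\Omega(q^{2\alpha n})}$, much smaller than $q^{-3\alpha n}$. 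The only substantive technical ingredient is the exponential tail bound on $\tau$; this is straightforward because the bias $1-q^{-2}$ toward $0$ is uniform in the distance, and the remaining Chernoff/Bernstein and renewal estimates are standard.
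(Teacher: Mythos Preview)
Your approach is correct and yields a considerably stronger bound than required, but it is substantially more involved than the paper's argument. The paper relies on a single observation contained in Lemma~\ref{noexplosion}: from any state, the rate of jumps \emph{away} from $0$ is at most $2$, so the number $A_t$ of such jumps up to time $t$ is stochastically dominated by a $\mathrm{Pois}(2t)$ variable. Since the total number of jumps satisfies $A_t+B_t\le 2A_t+|Z_0|$, one gets directly that the event $T_{\lfloor q^{2\alpha n}\rfloor}<q^{\alpha n}$ (i.e.\ at least $\lfloor q^{2\alpha n}\rfloor$ jumps by time $q^{\alpha n}$) is contained in $\{|Z_0|+2\,\mathrm{Pois}(2q^{\alpha n})\ge\lfloor q^{2\alpha n}\rfloor\}$, and a single application of Chebyshev's inequality to the Poisson gives the $O(q^{-3\alpha n})$ bound.

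So the paper's route is a two-line reduction to a Poisson tail, whereas your argument sets up a full renewal analysis of the embedded chain (exponential return-time tails, hitting-time bound from the starting point, Bernstein for the renewal count, then Chernoff for the accumulated Exp(2) holding times). Your method proves the stronger fact $T_{\lfloor q^{2\alpha n}\rfloor}\ge\Omega(q^{2\alpha n})$ with failure probability $e^{-\Omega(q^{2\alpha n})}$, which is genuinely more than what is needed here. One small remark: your justification for reaching $0$ within $4\alpha n$ steps is a Chernoff/large-deviation bound on the number of ``wrong'' steps, not a union bound as you phrased it; the argument is fine, only the wording is off.
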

\begin{proof}
 By Lemma~\ref{noexplosion}, the probability of the event above can be bounded by
 \begin{align*}
 \mathbb{P}\left(\text{Pois}(2q^{\alpha n})\ge \frac{\lfloor q^{2\alpha n} \rfloor-|Z_0|}2\right)&\le \mathbb{P}\left(\text{Pois}(2q^{\alpha n})\ge \frac{\lfloor q^{2\alpha n} \rfloor-2\alpha n}2\right) \\&\le \frac{2q^{\alpha n}}{\left(\frac{\lfloor q^{2\alpha n} \rfloor-2\alpha n}2-2q^{\alpha n}\right)^2}\\&=O\left(q^{-3\alpha n}\right)
  \end{align*}
 by Chebyshev's inequality.
\end{proof}

\section{Bounding the number of jumps of $Z_t$}

\begin{lemma}\label{noexplosion}
Assume that $Z_0=a$ for some constant $a$. Then the number of jumps up to time $t$ is stochastically dominated by $|a|+2\text{Pois}(2t)$. In particular $Z_t$ has no explosions.

Moreover, if $a\le 0$, then $D_{a,t}$ is stochastically dominated by a Poisson(2t) random variable.
\end{lemma}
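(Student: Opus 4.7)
The plan is to handle the two assertions in turn. For the first one I would introduce the folded process $|Z_t|$ and observe that it is itself a Markov chain on $\mathbb{Z}_{\ge 0}$: combining the rates of $Z$ at $b$ and at $-b$, the chain $|Z|$ has upward rate $q^{-|Z|}$ and downward rate $q^{|Z|}$ whenever $|Z|\ge 1$, and upward rate $2$ at $|Z|=0$. The jumps of $Z$ are in bijection with those of $|Z|$, so bounding the jumps of $|Z|$ suffices. Since the upward rate of $|Z|$ is uniformly at most $2$, a standard Poisson thinning gives that the number of upward jumps of $|Z|$ on $[0,t]$ is stochastically dominated by a $\text{Pois}(2t)$ random variable~$N$. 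Because $|Z|$ is non-negative, the number of downward jumps is at most $|Z_0|$ plus the number of upward jumps, so the total number of jumps is at most $|a|+2N$; in particular $Z_t$ does not explode.

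For the second assertion I would assume $a\le 0$ and decompose $D_{a,t}=D^+_{a,t}+D^-_{a,t}$ according to whether the downward jump occurs from a state $>0$ or a state $\le 0$. Because $a\le 0$, every excursion of $Z$ above level~$0$ must begin with an upward jump from~$0$ to~$1$. A completed excursion starts at~$1$ and returns to~$0$, so its net displacement is~$-1$; hence it contains exactly one more downward than upward jump among those taken from states $\ge 1$. An incomplete excursion ends with $Z_t\ge 1$, so it has at most as many downward as upward jumps. Summing over excursions and identifying the ``$+1$'' per completed excursion with the initial upward jump from~$0$ that started it yields the pointwise inequality
\[
D^+_{a,t}\le U^{\ge 0}_{a,t},
\]
where $U^{\ge 0}_{a,t}$ counts the upward jumps of $Z$ made from states $\ge 0$.

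To finish, I would merge $U^{\ge 0}$ and $D^-$ into a single counting process. Its intensity at time $s$ is $q^{-Z_s}\mathbbm{1}(Z_s\ge 0)+q^{Z_s}\mathbbm{1}(Z_s\le 0)$, which equals $2$ at $Z_s=0$ and is at most $1/q$ at every other state; it is thus uniformly bounded by~$2$. A routine coupling with a Poisson process of rate~$2$ then gives $U^{\ge 0}_{a,t}+D^-_{a,t}\le_{st}\text{Pois}(2t)$, and combining with $D^+_{a,t}\le U^{\ge 0}_{a,t}$ yields $D_{a,t}\le_{st}\text{Pois}(2t)$.

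The main subtlety is the excursion inequality $D^+_{a,t}\le U^{\ge 0}_{a,t}$. It is purely combinatorial, but it depends essentially on the hypothesis $a\le 0$: when $a>0$ the trajectory begins inside an excursion above~$0$, and the bookkeeping would lose an additive error of order $|a|_+$, which is exactly why the clean $\text{Pois}(2t)$ bound (without an $|a|$ term) is specific to the regime $a\le 0$. Everything else reduces to standard intensity comparisons and Poisson thinning.
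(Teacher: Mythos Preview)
Your proposal is correct. For the first assertion your argument is essentially the paper's: working with $|Z_t|$ and bounding its upward rate by $2$ is exactly the paper's observation that the ``away-from-$0$'' rate is at most $2$, and the inequality (downward jumps of $|Z|$) $\le |Z_0|+$ (upward jumps of $|Z|$) is the paper's identity $A+B=2A+|Z_0|-|Z_t|$ in disguise.

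For the second assertion you take a genuinely different route. The paper derives the exact identity
\[
D_{a,t}=A_{a,t}+|a|_+-|Z_t|_+
\]
from the three linear relations $Z_t-Z_0=U-D$, $|Z_t|-|Z_0|=A-B$, $U+D=A+B$, and then simply drops $-|Z_t|_+$; for $a\le 0$ this yields $D_{a,t}\le A_{a,t}$ immediately. Your decomposition reaches the \emph{same} inequality (since $U^{\ge 0}+D^-$ is precisely $A$, so $D^+\le U^{\ge 0}$ is equivalent to $D\le A$), but via an excursion bookkeeping argument instead of three lines of algebra. The paper's route is shorter and gives the sharper equality, which also transparently explains the $|a|_+$ correction you mention for $a>0$. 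Your excursion argument, on the other hand, is more combinatorial and would port to settings where the clean ``toward/away'' symmetry is unavailable. Both are valid; the paper's is simply more economical here.
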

\begin{proof}
 Let $A_{a,t}$ be the number of times when $Z$ moves away from $0$ up to time $t$, and let $B_{a,t}$ the number of times when $Z$ moves towards $0$ up to time $t$. Let $U_{a,t}$ be the number upward jumps up to time $t$. We have
 \begin{align*}
  Z_t-Z_0&=U_{a,t}-D_{a,t},\\
  |Z_t|-|Z_0|&=A_{a,t}-B_{a,t},\\
  U_{a,t}+D_{a,t}&=A_{a,t}+B_{a,t}.
 \end{align*}

 From these equations we obtain that the total number of jumps is
 \begin{equation}\label{totalnumberofjumps}A_{a,t}+B_{a,t}=2A_{a,t}+|Z_0|-|Z_t|\le 2A_{a,t}+|a|. \end{equation}
 
 Regardless of our current position, we jump away from $0$ with rate at most $2$. Thus, the number of jumps up to time $t$ that move away from $0$ is stochastically dominated by a $\text{Pois}(2t)$ random variable. Thus, the first statement follows from \eqref{totalnumberofjumps}.

 Moreover,
 \[D_{a,t}=\frac{Z_0-Z_t+A_{a,t}+B_{a,t}}2=A_{a,t}+\frac{Z_0+|Z_0|-Z_t -|Z_t|}2=A_{a,t}+|a|_+-|Z_t|_+\le A_{a,t}+|a|_+.\]
 Thus, the last statement follows.
\end{proof}

\begin{corr}\label{corPoissonmoment}
 Assuming that $a\le 0$, we have
 \[\left\|{D}_{a,t}^j\right\|_{\frac{4}3}=O(t^3)\text{ for all $t\ge 1$ and $j=0,1,2,3$}.\]
\end{corr}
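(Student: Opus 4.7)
The plan is to reduce the statement to a routine Poisson moment bound using the stochastic domination already established. Since $a\le 0$, Lemma~\ref{noexplosion} gives that $D_{a,t}$ is stochastically dominated by $N\sim\text{Pois}(2t)$, so we may fix a monotone coupling with $D_{a,t}\le N$ almost surely. Taking $j$-th powers preserves this inequality (both variables are nonnegative integers), so
\[\|D_{a,t}^j\|_{4/3}^{4/3}=\mathbb{E} D_{a,t}^{4j/3}\le \mathbb{E} N^{4j/3}.\]
Thus the task reduces to bounding $\mathbb{E} N^{4j/3}$ for $j\in\{0,1,2,3\}$ and $t\ge 1$.

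For every integer $k\ge 0$, the moment $\mathbb{E} N^k$ is the Touchard polynomial $B_k(2t)$, a polynomial in $t$ of degree exactly $k$, so $\mathbb{E} N^k=O(t^k)$ for $t\ge 1$. The exponent $4j/3$ is an integer for $j=0$ and $j=3$; for $j\in\{1,2\}$ Lyapunov's inequality gives
\[\bigl(\mathbb{E} N^{4j/3}\bigr)^{3/(4j)}\le \bigl(\mathbb{E} N^{\lceil 4j/3\rceil}\bigr)^{1/\lceil 4j/3\rceil}=O(t),\]
hence $\mathbb{E} N^{4j/3}=O(t^{4j/3})$. Combining this with the domination bound above,
\[\|D_{a,t}^j\|_{4/3}\le \bigl(\mathbb{E} N^{4j/3}\bigr)^{3/4}=O(t^j)\le O(t^3)\qquad(t\ge 1,\ j=0,1,2,3),\]
which is the claimed estimate.

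There is essentially no obstacle: the substantive ingredient is the stochastic domination by a Poisson variable, and this is already supplied by Lemma~\ref{noexplosion}. The remaining work is a mechanical application of Lyapunov's inequality together with the elementary fact that integer Poisson moments are polynomials in the rate parameter of matching degree. The mild $t\ge 1$ hypothesis is exactly what is needed to absorb the constant-order terms of those polynomials into the $O(t^k)$ bound.
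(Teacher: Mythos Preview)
Your proof is correct and follows essentially the same approach as the paper: reduce to Poisson moments via the stochastic domination from Lemma~\ref{noexplosion}, then bound those moments. The paper's version is slightly more compressed---it simply observes that for nonnegative integers $N^{4j/3}\le N^4$ when $j\le 3$, so one can bound all cases by $(\mathbb{E}\,\text{Pois}(2t)^4)^{3/4}=O(t^3)$ in one stroke, bypassing Lyapunov's inequality and the Touchard polynomial discussion.
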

\begin{proof}
    The statement is trivial for $j=0$, so we may assume that $j>0$. It follows from Lemma~\ref{noexplosion} that
    \[\left\|{D}_{a,t}^j\right\|_{\frac{4}3}=\left(\mathbb{E}{D}_{a,t}^\frac{4j}{3}\right)^{\frac{3}4}\le \left(\mathbb{E}|\text{Pois}(2t)|^4\right)^{\frac{3}4}=O(t^3).\qedhere\]
\end{proof}

Choose $Z_0$ such that $\mathbb{E} q^{|Z_0|}<\infty$ and let \[g(t)=\mathbb{E} q^{|Z_t|}.\] 

\begin{lemma}\label{lemmaderivative}
For all $t\ge 0$,
\begin{equation}\label{semicont}
\liminf_{s\to t-} g(s)\ge g(t)\ge \limsup_{s\to t+} g(s)
\end{equation}
and
\[g'_+(t)\le -\frac{g(t)^2}q+1+2q, \]
where $g'_+$ denotes the upper right-hand derivative of $g$, that is,
\[g'_+(t)=\limsup_{s\to t+}\frac{g(s)-g(t)}{s-t}.\]
\end{lemma}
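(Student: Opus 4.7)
I will work with the generator $L$ of $Z$, given on functions $h\colon\mathbb{Z}\to\mathbb{R}$ by $(Lh)(a)=q^{-a}(h(a+1)-h(a))+q^a(h(a-1)-h(a))$, apply it to $f(a)=q^{|a|}$, and then transfer estimates from $f$ to $g(t)=\mathbb{E}\,f(Z_t)$ via the Markov property. A direct calculation gives $(Lf)(a)=(q-1)(1-q^{2|a|-1})$ for $|a|\ge1$ and $(Lf)(0)=2(q-1)$, from which the key pointwise bound
\[(Lf)(a)\ \le\ 1+2q-\frac{f(a)^2}{q}\]
is verified by a short case check using $q\ge 2$. Observe that $(Lf)$ is uniformly bounded above (by $2(q-1)$) but has an unbounded negative part, which is the source of the main technical subtlety.

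The core technical step is a Dynkin-type integrated inequality for the unbounded function $f$. Let $\tau_N=\inf\{t:|Z_t|>N\}$; by Lemma~\ref{noexplosion}, $\tau_N\to\infty$ almost surely. For $|a|\le N$, the ordinary Dynkin formula applied to $f$ stopped at $\tau_N$ yields
\[\mathbb{E}_a\,f(Z_{h\wedge\tau_N})\ =\ f(a)+\mathbb{E}_a\!\int_0^{h\wedge\tau_N}\!(Lf)(Z_s)\,ds.\]
Letting $N\to\infty$: Fatou on the left, and monotone convergence applied separately to $(Lf)^+$ and $(Lf)^-$ on the right, give
\[g_a(h):=\mathbb{E}_a f(Z_h)\ \le\ f(a)+\int_0^h\mathbb{E}_a(Lf)(Z_s)\,ds\ \le\ f(a)+h(1+2q).\]
The integrability needed for the MCT on $(Lf)^-$ follows from the pointwise bound $(Lf)^-(a)=O(f(a)^2)$ together with the analogous Dynkin estimate $\mathbb{E}_a q^{2|Z_s|}\le q^{2|a|}+2(q^2-1)s$, proved by repeating the same localization argument with $q^{2|\cdot|}$ in place of $f$.

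Averaging $g_a(h)\le f(a)+h(1+2q)$ over $Z_t$ by the Markov property yields $g(t+h)=\mathbb{E}\,g_{Z_t}(h)\le g(t)+h(1+2q)$, whence $\limsup_{s\to t+}g(s)\le g(t)$. Running the same inequality from $g(t)=\mathbb{E}\,g_{Z_s}(t-s)$ for $s<t$ gives $g(s)\ge g(t)-(t-s)(1+2q)$ and hence $\liminf_{s\to t-}g(s)\ge g(t)$, so both halves of \eqref{semicont} are established. For the derivative bound, combining Markov with the full Dynkin inequality gives
\[g(t+h)-g(t)\ \le\ \int_0^h\mathbb{E}\,(Lf)(Z_{t+s})\,ds\ \le\ \int_0^h\!\Bigl(1+2q-\frac{g(t+s)^2}{q}\Bigr)\,ds,\]
where the last step uses the pointwise bound on $(Lf)$ followed by Jensen's inequality $\mathbb{E}\,f(Z_u)^2\ge g(u)^2$.

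To turn this integral inequality into a pointwise bound on $g'_+(t)$, I will upgrade the right upper semi-continuity to right-continuity of $g$. The no-jump lower bound $g_a(s)\ge q^{|a|}e^{-s(q^a+q^{-a})}$ together with Markov gives $g(t+s)\ge\mathbb{E}\bigl[q^{|Z_t|}e^{-s(q^{Z_t}+q^{-Z_t})}\bigr]$; the integrand increases monotonically to $q^{|Z_t|}$ as $s\downarrow 0$, so monotone convergence yields $\liminf_{s\to t+}g(s)\ge g(t)$. Combined with the already-established $\limsup_{s\to t+}g(s)\le g(t)$, this shows $g$ is continuous from the right at $t$. Hence $(1/h)\int_0^h g(t+s)^2\,ds\to g(t)^2$ by bounded convergence (dominated by $(g(t)+s(1+2q))^2$ on the interval), and dividing the displayed integral inequality by $h$ and taking $\limsup_{h\to 0+}$ produces $g'_+(t)\le 1+2q-g(t)^2/q$. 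The main obstacle is the Dynkin identity in the second paragraph: because $(Lf)$ is unbounded below, the passage $N\to\infty$ is not automatic and requires the careful split into $(Lf)^\pm$ together with the auxiliary second-moment bound; once that is in place, the remainder is a clean combination of the Markov property, Jensen's inequality, and elementary calculus.
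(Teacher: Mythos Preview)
Your approach via the generator is sound and genuinely different from the paper's. The paper never invokes Dynkin's formula: it directly estimates the positive and negative parts of the increment $q^{|Z_{t+\varepsilon}|}-q^{|Z_t|}$ by elementary path-counting (the probability of one jump towards $0$ and no jumps away gives the lower bound on the negative part; a crude product bound on reaching $|b|>|a|$ in time $\varepsilon$ gives the upper bound on the positive part), and then combines Fatou with Cauchy--Schwarz to get $g'_+(t)\le 2q-\bigl(g(t)^2/q-1\bigr)$. Your route computes $(Lf)$ explicitly, establishes the pointwise inequality $(Lf)\le 1+2q-f^2/q$, and passes through an integrated inequality; this is more systematic but costs you the extra right-continuity step (which the paper avoids entirely).

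One technical point deserves care. When you write $g(t+h)-g(t)\le \int_0^h\mathbb E\,(Lf)(Z_{t+s})\,ds$ by ``combining Markov with the full Dynkin inequality'', you are averaging the fixed-$a$ inequality over $a=Z_t$ and then applying Fubini. Your auxiliary bound $\mathbb E_a q^{2|Z_s|}\le q^{2|a|}+2(q^2-1)s$ justifies the MCT for fixed $a$, but it does \emph{not} directly justify the averaging, since integrating it against the law of $Z_t$ would require $\mathbb E\,q^{2|Z_t|}<\infty$, which is not part of the hypothesis $\mathbb E\,q^{|Z_0|}<\infty$. The clean fix is to rearrange \emph{before} averaging: from the localized Dynkin identity and the pointwise bound you already have
\[
\tfrac1q\!\int_0^h\!\mathbb E_a f(Z_s)^2\,ds\ \le\ f(a)+(1+2q)h-g_a(h),
\]
an inequality between nonnegative quantities whose right side has finite expectation in $a=Z_t$. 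Tonelli and Jensen then give $g(t+h)-g(t)\le(1+2q)h-\tfrac1q\int_0^h g(t+s)^2\,ds$ with no further integrability hypotheses, and your right-continuity argument finishes as written. This also shows, as a byproduct, that $\int_0^h\mathbb E\,q^{2|Z_{t+s}|}\,ds<\infty$, so the intermediate expression $\int_0^h\mathbb E(Lf)(Z_{t+s})\,ds$ is in fact well defined; but you should deduce that from the first-moment Dynkin rather than from the second-moment one.
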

\begin{proof}
Let $a>0$. Assume that $Z_t=a$. If in the time interval $[t,t+\varepsilon]$, we never traverse the directed edges $(a,a+1),(a-1,a)$ and $(-a+1,-a)$, but we do traverse the directed edge $(a,a-1)$, then $|Z_{t+\varepsilon}|< a$. On this event, we have $|q^{|Z_{t+\varepsilon}|}-q^{|Z_t|}|_-\ge q^{a-1}$. The event above has probability at least
\[\left(1-\exp(-\varepsilon q^{a})\right)\exp\left(-\varepsilon\left(2q^{-(a-1)}+q^{-a}\right)\right).\]
Thus,
\[\mathbb{E}(|q^{|Z_{t+\varepsilon}|}-q^{|Z_t|}|_-\quad|\,X_t=a)\ge q^{a-1}(1-\exp(-\varepsilon q^{a}))\exp(-\varepsilon(2q^{-(a-1)}+q^{-a})).\]

It follows that
\[\liminf_{\varepsilon\to 0+} \varepsilon^{-1}\mathbb{E}(|q^{|Z_{t+\varepsilon}|}-q^{|Z_t|}|_-\quad|\,Z_t=a)\ge q^{2a-1}> q^{2|a|-1}-1.\]

By symmetry the inequality above also true for $a<0$, and it trivially true for $a=0$ as well. Thus, by Fatou's lemma,
\begin{equation}\label{negpart}\liminf_{\varepsilon\to 0+}\varepsilon^{-1}\mathbb{E}\left|q^{|Z_{t+\varepsilon}|}-q^{|Z_t|}\right|_-\ge \mathbb{E}q^{2|Z_t|-1}-1\ge \frac{1}q \left(\mathbb{E} q^{|Z_t|}\right)^2-1=\frac{1}q g(t)^2-1,\end{equation}
where the last inequality follows from the Cauchy-Schwartz inequality.

Let $a\ge 0$. Assume that $Z_t=a$. If $|Z_{t+\varepsilon}|=b>a$, then in the time interval $[t,\varepsilon]$, we must traverse all the directed edges $(a,a+1),(a+1,a+2),\dots,(b-1,b)$ or we must traverse all the directed $(-a,-a-1),(-a-1,-a-2),\dots,(-b+1,-b)$. The probability of this event is at most
\[2\prod_{i=a}^{b-1}\varepsilon q^{-i}.\]
Thus,
\begin{align}\label{nemno}
\mathbb{E}(\mathbbm{1}\left(|Z_{t+\varepsilon}|>|a|)q^{|Z_{t+\varepsilon}|}\,|\,Z_t=a\right)\le \sum_{b=a+1}^{\infty} 2q^b\prod_{i=a}^{b-1}\varepsilon q^{-i}\le 2q\varepsilon+2q\sum_{i=2}^{\infty}\varepsilon^i=2q\varepsilon+2q\frac{\varepsilon^2}{1-\varepsilon}
\end{align}
assuming that $\varepsilon<1$. By symmetry, \eqref{nemno} is also true for $a<0$.

Therefore,
\begin{equation}\label{eq232412}\mathbb{E}|q^{|Z_{t+\varepsilon}|}-q^{|Z_t|}|_+\le \mathbb{E}\mathbbm{1}(|Z_{t+\varepsilon}|>|Z_t|)q^{|Z_{t+\varepsilon}|}\le 2q\varepsilon+2q\frac{\varepsilon^2}{1-\varepsilon}\text{ for all }0<\varepsilon<1, \end{equation}
so
\[g(t+\varepsilon)\le g(t)+2q\varepsilon+2q\frac{\varepsilon^2}{1-\varepsilon}\text{ for all }0<\varepsilon<1,\]
which implies \eqref{semicont}.

It follows from \eqref{eq232412} that
\[\limsup_{\varepsilon\to 0}\varepsilon^{-1}\mathbb{E}|q^{|Z_{t+\varepsilon}|}-q^{|Z_t|}|_+\le 2q.\]

Combining this with \eqref{negpart}, we obtain the second part of the lemma.
\end{proof}

The next two lemmas are easy calculus exercises.
\begin{lemma}\label{lemmacalc1}
Let $g$ be a function satisfying \eqref{semicont} such that $g_+'(t)< c$ for all $t\in[a,b)$, then $g(b)-g(a)\le c(b-a)$.
\end{lemma}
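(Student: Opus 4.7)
The plan is to proceed via a supremum argument, leveraging the left-hand lower semicontinuity built into \eqref{semicont}. Fix an arbitrary $\varepsilon > 0$ and define
\[T = \{t \in [a,b] : g(t) - g(a) \le (c+\varepsilon)(t-a)\}.\]
Then $a \in T$, so $s := \sup T$ is well-defined in $[a,b]$. The first step is to verify $s \in T$. This is immediate if the supremum is attained; otherwise, choose $t_n \in T$ with $t_n \nearrow s$ and apply the bound $\liminf_{t \to s-} g(t) \ge g(s)$ from \eqref{semicont} to obtain
\[g(s) \le \liminf_n g(t_n) \le g(a) + (c+\varepsilon)(s-a),\]
so $s \in T$ in all cases.

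The second step is to rule out $s < b$. Suppose for contradiction that $s < b$. Since $g'_+(s) < c$, the definition of the upper right-hand derivative as a limsup yields some $\delta > 0$ and some $c' < c$ such that $g(r) - g(s) \le c'(r-s)$ for every $r \in (s, s+\delta)$. Pick such an $r$ with $r < b$; chaining the two inequalities,
\[g(r) - g(a) \le c'(r-s) + (c+\varepsilon)(s-a) \le (c+\varepsilon)(r-a),\]
where the last step uses $c' < c+\varepsilon$ and $r - s > 0$. Hence $r \in T$ with $r > s$, contradicting $s = \sup T$. Therefore $s = b$, which gives $g(b) - g(a) \le (c+\varepsilon)(b-a)$, and letting $\varepsilon \downarrow 0$ completes the proof.

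I do not anticipate a genuine obstacle here: the argument is standard once one notices that $g$ could a priori have a downward jump at the critical point $s$, and it is precisely the left-hand half of \eqref{semicont} (namely $\liminf_{t \to s-} g(t) \ge g(s)$) that prevents such a jump from knocking $s$ out of $T$. The right-hand half of \eqref{semicont} is not actually needed for this lemma.
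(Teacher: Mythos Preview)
Your proof is correct and follows essentially the same supremum argument as the paper. The paper's version is slightly more direct: it uses the set $\{t\in[a,b]:g(t)-g(a)\le c(t-a)\}$ without the $\varepsilon$ slack, and obtains the contradiction by noting that $s<b$ would force $g(s+\varepsilon)-g(s)>c\varepsilon$ for all sufficiently small $\varepsilon>0$, hence $g'_+(s)\ge c$.
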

\begin{proof}
Let $s=\sup\{t\in [a,b]\,:\,g(t)-g(a)\le c(t-a)\}$. By \eqref{semicont}, we have $g(s)-g(a)\le c(s-a)$. Thus, it is enough to prove that $s=b$. Assume that $s<b$.  By the definition of $s$, we have $g(s+\varepsilon)-g(a)> c(s+\varepsilon-a)$ for all $b-s>\varepsilon>0$. So $g(s+\varepsilon)-g(s)>\varepsilon c$ for all $b-s>\varepsilon>0$, which contradicts $g_+(s)< c$. 
\end{proof}

\begin{lemma}\label{lemmacalc2}
Let $g$ be a function satisfying \eqref{semicont} such that $g_+'(t)< 0$ for all $t$ such that $g(t)= L$. Assume that $g(a)\le L$, then $g(t)\le L$ for all $t>a$.
\end{lemma}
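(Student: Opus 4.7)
The plan is to argue by contradiction. Suppose there exists $b > a$ with $g(b) > L$, and set
\[s = \inf\{t \in (a, b] : g(t) > L\} \in [a, b].\]
The strategy is to show first that $g(s) \le L$, and then that there is a right neighborhood of $s$ throughout which $g$ stays strictly below $L$. This will contradict the definition of $s$, since being an infimum forces the existence of times arbitrarily close to and greater than $s$ at which $g$ exceeds $L$ (the boundary case $s = b$ is already ruled out once $g(s) \le L < g(b)$ is established).

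For the first step, if $s = a$ then $g(s) \le L$ is the hypothesis. If $s > a$, then $g(t) \le L$ for every $t \in [a,s)$ by definition of $s$, and the left inequality in \eqref{semicont}, $\liminf_{t\to s-} g(t) \ge g(s)$, gives $g(s) \le \liminf_{t\to s-} g(t) \le L$.

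For the second step, I would split on the value of $g(s)$. If $g(s) < L$, the right inequality in \eqref{semicont} yields $\limsup_{t\to s+} g(t) \le g(s) < L$, so $g(t) < L$ on some interval $(s, s + \delta)$. If instead $g(s) = L$, the hypothesis $g'_+(s) < 0$ means $\limsup_{t\to s+} \tfrac{g(t) - L}{t - s} < 0$, and again $g(t) < L$ on some interval $(s, s + \delta)$. In either situation the definition of $s$ is contradicted. The argument is elementary and I do not anticipate a real obstacle; the only delicate point is combining both halves of the one-sided semicontinuity condition \eqref{semicont} — the left half to bound $g(s)$ from above, the right half to handle the subcase $g(s) < L$ — with the strict derivative hypothesis, which is precisely what is needed to handle the borderline case $g(s) = L$.
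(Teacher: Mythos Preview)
Your proof is correct and follows essentially the same approach as the paper's: define $s$ as the infimum of times where $g$ exceeds $L$, use the two halves of \eqref{semicont} together with the derivative hypothesis to derive a contradiction. The paper compresses your two subcases into the single assertion ``it follows from \eqref{semicont} that $g(s)=L$'' and then notes $g'_+(s)\ge 0$, whereas you spell out the case $g(s)<L$ separately; the underlying argument is identical.
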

\begin{proof}
Assume that $\{t>a\,:\,g(t)>L\}$ is non empty. Let $s=\inf\{t>a\,:\,g(t)>L\}$. It follows from \eqref{semicont} that $g(s)=L$. But then by the choice of $s$, we have $g'_+(s)\ge 0$, which is a contradiction.
\end{proof}

\begin{lemma}\label{rexists}
There is a non-increasing function $r:(0,\infty)\to \mathbb{R}$ (not depending on the law of $Z_0$) such that 
\[g(t)\le r(t)\text{ for all $t>0$, provided that $g(0)<\infty$.} \]

\end{lemma}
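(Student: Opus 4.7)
The plan is to apply Lemma~\ref{lemmacalc2} to the difference $u(t) := g(t) - r(t)$ with threshold $L = 0$, where $r$ is an explicit envelope independent of the law of $Z_0$. I would take $L_0 := \sqrt{q(1+2q)}$ and set
\[
r(t) := L_0 + \frac{q}{t}, \qquad t > 0.
\]
This function is smooth, strictly decreasing, and satisfies $r'(t) = -(r(t)-L_0)^2/q$.

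First I would note that $u$ inherits the one-sided semicontinuity \eqref{semicont} from $g$ because $r$ is continuous, and that $u'_+(t) = g'_+(t) - r'(t)$. The key calculation is to verify the hypothesis of Lemma~\ref{lemmacalc2}: at any contact point $u(t) = 0$, i.e.\ $g(t) = r(t) = L_0 + q/t > L_0$, the inequality from Lemma~\ref{lemmaderivative} together with the identity $L_0^2 = q(1+2q)$ collapses to
\[
u'_+(t) \;\le\; -\frac{g(t)^2}{q} + 1 + 2q + \frac{(g(t) - L_0)^2}{q} \;=\; -\frac{2L_0(g(t) - L_0)}{q} \;<\; 0.
\]

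Next, because $g(0) < \infty$ and the estimate $g(t+\varepsilon) \le g(t) + O(\varepsilon)$ derived in the proof of Lemma~\ref{lemmaderivative} keeps $g$ locally bounded near $0$, while $r(\varepsilon) \to \infty$ as $\varepsilon \to 0^+$, there exists some small $a > 0$ with $u(a) \le 0$. Lemma~\ref{lemmacalc2} then yields $u(t) \le 0$, i.e.\ $g(t) \le r(t)$, for every $t > a$. Since $a$ may be chosen smaller than any prescribed positive target time, the bound $g(t) \le r(t)$ holds for every $t > 0$, and $r$ is manifestly non-increasing and independent of the law of $Z_0$.

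The delicate step is producing the \emph{strict} inequality $u'_+(t) < 0$ at every contact point. A more naive majorization such as $g'_+(t) \le -(g(t) - L_0)^2/q$ would only give $u'_+(t) \le 0$, which is not enough to invoke Lemma~\ref{lemmacalc2}. The specific choice $L_0 = \sqrt{q(1+2q)}$ is precisely what balances the $-y^2/q$ and the $1+2q$ terms in the differential inequality so that a genuinely negative residue $-2L_0(g-L_0)/q$ survives whenever $g > L_0$, which is automatic at any contact point since $r(t) > L_0$.
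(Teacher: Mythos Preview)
Your proof is correct and takes a genuinely different route from the paper. The paper proceeds by a discrete descent: it sets $\tau_i=\inf\{t:g(t)\le i\}$, uses Lemma~\ref{lemmacalc1} to bound $\tau_i-\tau_{i+1}\le 2q/i^2$, sums these to show $\tau_{r(t)}<t$ for an implicitly defined step function $r$, and only then invokes Lemma~\ref{lemmacalc2}. You instead write down an explicit smooth supersolution $r(t)=L_0+q/t$ of the Riccati-type inequality coming from Lemma~\ref{lemmaderivative} and compare directly via Lemma~\ref{lemmacalc2} applied to $u=g-r$; the algebra at a contact point collapses neatly because $L_0^2=q(1+2q)$ kills the constant term and leaves a strictly negative residue. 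Your approach bypasses Lemma~\ref{lemmacalc1} and yields a simple closed-form envelope, at the cost of needing the precise constant $1+2q$ rather than just a crude bound $g'_+\le -g^2/(2q)$ for large $g$. One minor stylistic point: instead of citing the inequality $g(t+\varepsilon)\le g(t)+O(\varepsilon)$ from inside the proof of Lemma~\ref{lemmaderivative}, you could obtain $g(a)\le g(0)+(1+2q)a$ directly from the stated bound $g'_+\le 1+2q$ together with Lemma~\ref{lemmacalc1}.
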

\begin{proof}
By Lemma~\ref{lemmaderivative}, there is a positive integer $K$ such that if $g(t)\ge K$, then $g_+'(t)\le -\frac{1}{2q} g(t)^2$. 

Let us define
\[r(t)=\max\left(K\,,\,\min\left\{j\in\mathbb{Z}_+\,:\, \sum_{i=j}^\infty i^{-2}<\frac{t}{2q}\,\right\}\right).\]

Since $\sum_{i=1}^\infty i^{-2}<\infty$, $r(t)$ is finite.

Let
\[\tau_i=\inf\{ t\ge 0\,:\, g(t)\le i\}.\]
It follows from Lemma~\ref{lemmacalc1} that if $i\ge K$, then $\tau_{i}-\tau_{i+1}\le \frac{2q}{i^2}$.

Let us choose an integer $j$ such that $j>g(0)$. Then $\tau_j=0$. Thus,

\[\tau_{r(t)}=\sum_{i={r(t)}}^{j-1}(\tau_{i}-\tau_{i+1})\le 2q\sum_{i={r(t)}}^{j-1}i^{-2}\le 2q\sum_{i={r(t)}}^{\infty}i^{-2}<t.\]

Since $r(t)\ge K$, $g_+'(s)<0$ for all $s$ such that $g(s)=r(t)$. Since $\tau_{r(t)}<t$, we have an $a<t$ such that $g(a)\le r(t)$. Thus, Lemma~\ref{lemmacalc2} gives that $g(t)\le r(t)$.
\end{proof}

As a straightforward corollary of Lemma~\ref{rexists}, we get the following lemma.
\begin{lemma}\label{Zcontinuity}
Assume that $g(0)<\infty$. Then for all $s>0$, $\varepsilon>0$, we have
\[\mathbb{P}(Z_t\text{ has a jump in the time interval }[s,s+\varepsilon])\le (r(s)+1)\varepsilon.\]
\end{lemma}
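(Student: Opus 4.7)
The plan is to reduce everything to a one-step conditional calculation. Given the value $Z_s=a$, the continuous-time Markov chain property tells us that the waiting time until the next jump is exponential with parameter equal to the total outgoing rate from $a$, namely $q^a+q^{-a}=q^{|a|}+q^{-|a|}$. Since $q\ge 2$, we have $q^{-|a|}\le 1$, so this rate is bounded by $q^{|a|}+1$. The event that $Z$ has some jump inside $[s,s+\varepsilon]$ is exactly the event that the first jump after time $s$ occurs by time $s+\varepsilon$.

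Using the elementary inequality $1-e^{-x}\le x$ for $x\ge 0$, it follows that
\[
\mathbb{P}\bigl(Z\text{ has a jump in }[s,s+\varepsilon]\,\big|\,Z_s=a\bigr)
=1-\exp\bigl(-\varepsilon(q^{|a|}+q^{-|a|})\bigr)
\le \varepsilon\bigl(q^{|a|}+1\bigr).
\]
Taking expectation over $Z_s$ and using the definition $g(s)=\mathbb{E}\,q^{|Z_s|}$, one gets
\[
\mathbb{P}\bigl(Z\text{ has a jump in }[s,s+\varepsilon]\bigr)\le \varepsilon\bigl(g(s)+1\bigr).
\]
Finally, by Lemma~\ref{rexists} we have $g(s)\le r(s)$ under the hypothesis $g(0)<\infty$, which yields the desired bound $(r(s)+1)\varepsilon$.

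There is no real obstacle: the argument is just the exponential-waiting-time property of a continuous-time Markov chain plus the tower property and Lemma~\ref{rexists}, which is exactly why the lemma is advertised as a straightforward corollary. The only point worth checking carefully is that the Markov property applies at the deterministic time $s$ (standard, since $Z_t$ is a right-continuous Markov process without explosions by Lemma~\ref{noexplosion}), and that conditioning on $Z_s$ is well-defined, both of which are immediate.
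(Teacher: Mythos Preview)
Your argument is correct and is exactly the intended one: the paper states this lemma as a straightforward corollary of Lemma~\ref{rexists}, and the proof is precisely conditioning on $Z_s$, bounding the exponential holding rate by $q^{|Z_s|}+1$, and invoking $g(s)\le r(s)$.
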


\subsection{The limit of $(D_{a,t},Z_{a,t})$ as $a\to -\infty$ -- The proof of Lemma~\ref{Dinftyexists}}

It is straightforward to see that the set of probability measures on a discrete countable set is complete with respect to the total variation distance. Thus, it is enough to prove that for every $\varepsilon>0$, there is $K$ such that if $b<a<K$, then $\dTV\left((D_{a,t},Z_{a,t})\,,\,(D_{b,t},Z_{b,t})\right)<\varepsilon$.

Let $Z$ and $Z'$ processes with initial states $Z_0=b$, $Z_0'=a$ and transition rate matrix $Q_Z$. We assume that these are independent. Let $D_t$ and $D_t'$ count the number of downward jumps made by the process $Z$ and $Z'$ until time $t$, respectively. Let us define
\[\tau=\min(t,\inf\{s>0\,:\,Z_s=a\}),\]
and let $\nu$ be the law of $\tau$.
Let $B$ be the number of downward jumps of $Z$ until time $\tau$ (including the jump at $\tau$ if that is a downward jump.) Then $(D_t,Z_t)$ has the same distribution as \break $(B+D'_{t-\tau},Z'_{t-\tau}+E)$, where $E=\mathbbm{1}(\tau=t)(Z_t-a)$. Therefore, $\left((B+D'_{t-\tau},Z'_{t-\tau}+E)\,,\,(D'_t,Z'_t)\right)$ provides a coupling of $(D_t,Z_t)$ and $(D_t',Z_t')$. 

Thus, for any $0<\delta<t/2$, we have 
\begin{align}\label{dTVbecs1}\dTV\left((D_t,Z_t)\,,\,(D_t',Z_t')\right)&\le \mathbb{P}((B+D'_{t-\tau},Z'_{t-\tau}+E)\neq (D_t',Z_t'))\\&\le \mathbb{P}(B>0)+\mathbb{P}((D'_{t-\tau},Z'_{t-\tau}+E)\neq (D_t',Z_t))\nonumber\\
&\le \mathbb{P}(B>0)+\int_0^t \mathbb{P}((D'_{t-s},Z'_{t-s}+E)\neq (D_t',Z_t')) \nu(ds)\nonumber\\&\le
\mathbb{P}(B>0)+\mathbb{P}(\tau>\delta)+\int_0^\delta \mathbb{P}((D'_{t-s},Z'_{t-s})\neq (D_t',Z_t'))\nu(ds).\nonumber
\end{align}
Note that by Lemma~\ref{Zcontinuity} for any $s\le \delta<t/2$, we have
\[\mathbb{P}((D'_{t-s},Z'_{t-s})\neq (D_t',Z_t'))\le \mathbb{P}(Z'\text{ has a jump in }[t-s,t])\le (r(t-s)+1) s\le (r(t/2)+1)\delta.\]

Thus, combining this with~\eqref{dTVbecs1}, we see that
\[\dTV\left((D_t,Z_t)\,,\,(D_t',Z_t')\right)\le \mathbb{P}(B>0)+\mathbb{P}(\tau>\delta)+(r(t/2)+1)\delta. \]

Let us choose $\delta>0$ such that $(r(t/2)+1)\delta<\frac{\varepsilon}{3}$.

Observe that
\begin{align*}\mathbb{P}(B=0)&\ge \mathbb{P}(\widehat{Z}_i=b+i\text{ for all }1\le i\le a-b)\\&=\prod_{b\le i<a}\frac{q^{-i}}{q^{-i}+q^{i}}\\&\ge \prod_{ i<a}\frac{q^{-i}}{q^{-i}+q^{i}}\\&= \exp\left(-\sum_{i<K}O\left(q^{2i}\right)\right)\ge 1-\frac{\varepsilon}6,\end{align*}
provided that $K$ is small enough.

Next, we have
\begin{align*}\mathbb{P}(\tau>\delta)&\le \mathbb{P}(\widehat{Z}_i\neq b+i\text{ for some }1\le i\le a-b)+\mathbb{P}(\tau>\delta,\widehat{Z}_i=b+i\text{ for all }1\le i\le a-b)\\
&\le\frac{\varepsilon}6+\mathbb{P}\left(\tau>\delta\,\Big|\,\widehat{Z}_i=b+i\text{ for all }1\le i\le a-b\right).
\end{align*}

Let $\tau'=\inf\{s>0\,:\,Z_s=a\}$. Observe that $\tau'\ge \tau$.
Conditioned on the event that $\widehat{Z}_i=b+i\text{ for all }1\le i\le a-b$, $\tau'$ has the same distribution as $F_b+F_{b+1}+\dots+F_{a-1}$, where $F_i$ are independent exponential random variables with parameter $q^i+q^{-i}$. 

Thus,
\[\mathbb{E}(\tau'\,|\,\widehat{Z}_i=b+i\text{ for all }1\le i\le a-b)=\sum_{b\le i<a}\frac{1}{q^i+q^{-i}}\le \sum_{ i<K}\frac{1}{q^i+q^{-i}}\le \frac{\varepsilon\delta}{3}\]
provided that $K$ is small enough.

Combining with Markov's inequality, we see that
\[\mathbb{P}(\tau>\delta)\le \frac{\varepsilon}6+\mathbb{P}\left(\tau>\delta\,\Big|\,\widehat{Z}_i=b+i\text{ for all }1\le i\le a-b\right)\le \frac{\varepsilon}6+\frac{\varepsilon}3.\]

Putting everything together, we get that
\[\dTV\left((D_t,Z_t)\,,\,(D_t',Z_t')\right)\le \mathbb{P}(B>0)+\mathbb{P}(\tau>\delta)+(r(t/2)+1)\delta<\frac{\varepsilon}6+\frac{\varepsilon}6+\frac{\varepsilon}3+\frac{\varepsilon}3\le\varepsilon.\]

\subsection{The proof of Corollary~\ref{marginalconv}}

Let $\varepsilon>0$. By Lemma~\ref{Zcontinuity}, we can choose a small enough $\delta>0$ such that $Z_t$ has no jumps in $\cup_{i=1}^k [t_i-\delta,t_i+\delta]$ with probability at least $1-\varepsilon$. Assume that $n$ is large enough such that $|Z_0|<2\alpha n$, $q^{-\alpha n}<\delta$, $q^{\alpha n}>t_k$ and the event in Theorem~\ref{theoremcoupling} has probability at least $1-\varepsilon$.

Then with probability $1-2\varepsilon$, the event in Theorem~\ref{theoremcoupling} occurs, moreover $Z_t$ has no jumps in~$\cup_{i=1}^k [t_i-\delta,t_i+\delta]$. It is straightforward to see that on this event $Y_{t_i}=Z_{t_i}$ for all $1\le i\le k$.




\section{The corank at criticality}
\subsection{The proof of part~\eqref{thmdimkerC2kpart2} of Theorem~\ref{thmdimkerC2k}}
\begin{lemma}\label{Lemma34}
 Given any $c$, let $\mathcal{A}$ be the event that
 \begin{itemize}
  \item $X_{2i-2}=c$, and
  \item $X_{2i-2}+ X_{2i-1}\neq n$ or $X_{2i-1}+ X_{2i}\neq n$.
 \end{itemize}
  
 Then
 \[\mathbb{P}\left(X_{2i-1}+ X_{2i}\neq n\text{ or }X_{2i-2}+X_{2i-1}\le n-\alpha n \,|\, \mathcal{A}\right)\le O\left(q^{2c-n}\right). \]
\end{lemma}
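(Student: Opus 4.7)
The plan is to condition on $X_{2i-2}=c$ and decompose the target event $B := \{X_{2i-1}+X_{2i}\ne n\}\cup\{X_{2i-1}\le n-c-\alpha n\}$ according to the value of $r=X_{2i-1}$. By the Markov property, the desired conditional probability equals
\[
\frac{\sum_{(r,r')\in B\cap\mathcal{A}} P_n(c,r)\,P_n(r,r')}{1-P_n(c,n-c)\,P_n(n-c,c)}.
\]
Noting that the factor $(q^{-r};q)_r$ appearing in the denominator of~\eqref{lemmaMarkoveq} is bounded below by the positive absolute constant $\prod_{j\ge 1}(1-q^{-j})$, one obtains a uniform upper bound $P_n(c,r) \le C\,q^{-(n-c-r)(n-r)}$; writing $s := n-c-r$ this reads $C q^{-s(c+s)}$.

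The first step is to lower-bound the denominator. From $P_n(c,n-c)P_n(n-c,c)=\prod_{j=c+1}^n(1-q^{-j})\cdot\prod_{j=n-c+1}^n(1-q^{-j})$ together with $\prod(1-x_j)\le\exp(-\sum x_j)$ and $1-e^{-y}\ge (1-e^{-1})\min(1,y)$, one obtains $1-P_n(c,n-c)P_n(n-c,c)\ge \Omega(q^{-c}+q^{-(n-c)})\ge \Omega(q^{-c})$. When $c\ge n/2$ the target $O(q^{2c-n})$ is trivially $\Omega(1)$, so one may assume $c\le n/2$.

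Next I would split the numerator by the value of $r$ into three types. Type~(I), $r\le n-c-\alpha n$, lies entirely in $B\cap\mathcal{A}$ (both conjuncts of $\mathcal{A}$ are automatic since $r\ne n-c$, and the second disjunct of $B$ is exactly the defining inequality); its contribution is at most $\sum_{s\ge \alpha n} C q^{-s(c+s)} = O(q^{-\alpha n(c+\alpha n)})$, which for large $n$ is negligible compared to $q^{c-n}$. Type~(II), $n-c-\alpha n < r < n-c$ paired with $r':=X_{2i}\ne n-r$, contributes, using $1-P_n(r,n-r)= O(q^{-r})$ (the same estimate derived in~\eqref{approx2}--\eqref{approx3}),
\[
\sum_{s=1}^{\lceil \alpha n\rceil-1} C q^{-s(c+s)}\cdot O(q^{-(n-c-s)}) \;=\; q^{c-n}\cdot O\!\left(\sum_{s\ge 1} q^{-s(c+s-1)}\right) \;=\; O(q^{c-n}).
\]
Type~(III), $r=n-c$ paired with $r'\ne c$, contributes $P_n(c,n-c)(1-P_n(n-c,c))\le 1-P_n(n-c,c)=O(q^{-(n-c)})=O(q^{c-n})$. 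Summing, the numerator is $O(q^{c-n})$, and dividing by the denominator bound $\Omega(q^{-c})$ yields $O(q^{2c-n})$ as required.

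The main obstacle is essentially bookkeeping: one must verify that the estimate $1-P_n(d,n-d)=\Theta(q^{-d})$ and the uniform bound $P_n(c,r)\le Cq^{-(n-c-r)(n-r)}$ hold with constants independent of $c$ across all $c\in\{0,1,\dots,\lfloor n/2\rfloor\}$ and the relevant range of $r$. These follow from the same $q$-Pochhammer manipulations already carried out in equations~\eqref{approx1}--\eqref{approx3}, so the work is routine.
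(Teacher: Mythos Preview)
Your proposal is correct and follows essentially the same approach as the paper: bound the numerator $\mathbb{P}(B\mid X_{2i-2}=c)$ by $O(q^{c-n})$ via the crude estimate $P_n(c,r)=O(q^{-(n-c-r)(n-r)})$ together with $1-P_n(r,n-r)=O(q^{-r})$, and bound the denominator $\mathbb{P}(\mathcal{A}\mid X_{2i-2}=c)$ from below by $\Omega(q^{-c})$. The only cosmetic differences are that the paper merges your Types~(II) and~(III) into the single sum $\sum_r P_n(c,r)\bigl(1-P_n(r,n-r)\bigr)$, and for the denominator the paper lower-bounds by the single transition $P_n(c,n-c-1)P_n(n-c-1,c+1)\ge\Omega(q^{-c})$ rather than via the product formula and $1-e^{-y}\ge(1-e^{-1})\min(1,y)$; both routes are equally short.
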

\begin{proof}

 First, it is straightforward to see that
 \[P_n(d,r)=\mathbbm{1}(d+r\le n)O\left(q^{-(n-d-r)(n-r)}\right).
\]

Thus,
\begin{align*}1-P_n(d,n-d)&=\sum_{r=0}^{n-d-1}P_n(d,r)\\&=\sum_{r=0}^{n-d-1} O\left(q^{-(n-d-r)(n-r)}\right)\\&=\sum_{r=0}^{n-d-1} O\left(q^{-(n-r)}\right)\\&=O\left(q^{-d}\right) ,
\end{align*}

Therefore,
\begin{align*}\sum_{r=0}^{n-d} P_n(d,r)(1-P_n(r,n-r))&=\sum_{r=0}^{n-d} O\left(q^{-(n-d-r)(n-r)}\right) O\left(q^{-r}\right)\\&=\sum_{r=0}^{n-d-2} O\left(q^{-2(n-r)}\right) O\left(q^{-r}\right)+O\left(q^{-n}\right)+O\left(q^{-(n-d)}\right)\\&=O\left(q^{-(n-d)}\right)+O\left(q^{-n}\right) \sum_{r=0}^{n-d-2} O\left(q^{-(n-r)}\right)\\&=O\left(q^{-(n-d)}\right).\end{align*}

Furthermore,
\[\sum_{r=0}^{n-d-\alpha n} P_n(d,r)=\sum_{r=0}^{n-d-\alpha n}O\left(q^{-(n-d-r)(n-r)}\right)=O\left(q^{-\alpha n (\alpha n+d)}\right)=O\left(q^{-(n-d)}\right).\]

Thus, 

\begin{equation}\label{eq847463}
 \mathbb{P}(X_{2i-1}+ X_{2i}\neq n\text{ or }X_{2i-2}+X_{2i-1}\le n-\alpha n \,|\, X_{2i-2}=c)=O\left(q^{-(n-c)}\right).
\end{equation}

Note that
\[\mathbb{P}(X_{2i-2}+ X_{2i-1}\neq n\text{ or }X_{2i-1}+ X_{2i}\neq n\,|\, X_{2i-2}=c)\ge P_n(c,n-c-1)P_n(n-c-1,c+1)\ge \Omega\left(q^{-c}\right).\]

Combining this with \eqref{eq847463}, the statement follows.
\end{proof}

\begin{lemma}\label{gettingcloseto0}
Assume that $X_0=0$. Let $\tau$ be the smallest $i$ such that $X_{2i}\ge m-2\alpha n.$ Then with probability $1-O\left(q^{-\alpha n}\right)$, we have $\tau\le q^{m-\alpha n}$, $ \alpha n\le m-X_{2\tau}\le 2\alpha n$ and \[X_{2i-1}+X_{2i}=n\text{ for all $1\le i\le \tau$}.\]
\end{lemma}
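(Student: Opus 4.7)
The starting point $\widehat{X}_0 = X_0 - m = -m$ lies far outside the range $|a| \le 2\alpha n$ in which Theorem~\ref{theoremcoupling0} applies, so the plan is to analyze the chain $(X_i)$ directly, exploiting the strong upward drift that $P_n$ exhibits on small states. Set $N = \lfloor q^{m - \alpha n} \rfloor$. The argument has three ingredients, combined at the end via a union bound.

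First, I would show that with probability $1 - O(q^{-2\alpha n})$, the equality $X_{2i-1} + X_{2i} = n$ holds for every $i \le N$ (in truth, up to the first time $X_{2(i-1)}$ exceeds $m - \alpha n$; see the last paragraph). The estimate
\[\mathbb{P}(X_{2i-1} + X_{2i} \neq n \mid X_{2i-2} = c) = \sum_r P_n(c,r)(1 - P_n(r,n-r)) = O(q^{-(n-c)})\]
already appears in the proof of Lemma~\ref{Lemma34}; for $c \le m - \alpha n$ it is at most $O(q^{-(m+\alpha n)})$, and a union bound over $i \le N$ yields the claimed $O(q^{-2\alpha n})$. Call this event $\mathcal{E}_1$.

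Second, on $\mathcal{E}_1$ the sequence $(X_{2i})$ is monotonically non-decreasing, because $P_n(X_{2i-2}, X_{2i-1}) > 0$ forces $X_{2i-2} + X_{2i-1} \le n$, whence $X_{2i} = n - X_{2i-1} \ge X_{2i-2}$. By \eqref{approx1} the stay probability at state $c$ is $P_n(c, n-c)P_n(n-c, c) = 1 - \Theta(q^{-c})$, so the number of steps spent at each visited state $c$ is stochastically bounded by a geometric random variable with mean $\Theta(q^c)$. Summing over $c = 0, 1, \ldots, m - 2\alpha n - 1$, I obtain $\mathbb{E}[\tau \mid \mathcal{E}_1] = O(q^{m - 2\alpha n})$, and Markov's inequality produces $\mathbb{P}(\tau > N \mid \mathcal{E}_1) = O(q^{-\alpha n})$.

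Third, a single jump $X_{2i} - X_{2i-2} = j$ (with $j\ge 1$) on $\mathcal{E}_1$ requires $X_{2i-1} = n - X_{2i-2} - j$, which by \eqref{lemmaMarkoveq} has probability $P_n(c,n-c-j) P_n(n-c-j,c+j) = O(q^{-j(j+c)})$; summing over $j \ge \alpha n + 2$ and over all $i \le N$ gives the negligible bound $q^{-\Omega(n^2)}$. Hence, on the intersection of the three events, $(X_{2i})$ rises monotonically by jumps of size at most $\alpha n + 1$ until it crosses $m - 2\alpha n$, so that $X_{2\tau} \in [m - 2\alpha n, m - \alpha n]$ and $\tau \le N$, as required. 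The main obstacle is making the conditioning in the first step rigorous, since the bound $c \le m - \alpha n$ is precisely what we are trying to prove; this is resolved by introducing the stopping time $\sigma = \min\{i : X_{2(i-1)} > m - \alpha n\}$, running the union bounds up to $\sigma$, and then observing that on the full intersection of the three events $\sigma > \tau$, so that the conditioning was never binding.
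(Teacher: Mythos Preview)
Your argument is correct and follows the same strategy as the paper: control the bad events (failure of $X_{2i-1}+X_{2i}=n$, and overly large upward jumps), deduce monotonicity of $(X_{2i})$ on their complement, and bound $\tau$ via the geometric holding times at each level together with Markov's inequality. The paper's bookkeeping differs only in that it indexes by \emph{level} rather than by \emph{time}: it introduces the first-visit times $\tau_j$ and the events $\mathcal{A}_j$ (which package your steps 1 and 3 together via Lemma~\ref{Lemma34}, giving $\mathbb{P}(\mathcal{A}_j^c)=O(q^{2j-n})$), so the union bound has only $m$ terms and the circularity you handle with the stopping time $\sigma$ never arises; this also lets the paper replace your conditional $\mathbb{E}[\tau\mid\mathcal{E}_1]$ by the unconditional $\mathbb{E}\sum_j(\tau_j'-\tau_j)$, which is cleaner since conditioning on $\mathcal{E}_1$ in principle perturbs the geometric holding-time law.
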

\begin{proof}
Let $\tau_j$ be the smallest $i$ such that $X_{2i}= j$. Furthermore, let
\[\tau_j'=\min\{i>\tau_j\,:\, X_{2i-2}+X_{2i-1}\neq n \text{ or }X_{2i-1}+X_{2i}\neq n\}.\]

Moreover, let $\mathcal{A}_j$ be the event that $X_{2\tau_j'-1}+X_{2\tau_j'}= n$ and $X_{2\tau_j'-2}+X_{2\tau_j'-1}\ge n- \alpha n$. By Lemma~\ref{Lemma34}, the probability that the events $\mathcal{A}_0,\mathcal{A}_1,\dots, \mathcal{A}_{m-2\alpha n}$ all occur is at least
\[1-\sum_{j=0}^{m-\alpha n}(1-\mathbb{P}(\mathcal{A}_j))=1-\sum_{j=0}^{m-2\alpha n}O\left(q^{2j-n}\right)=1-O\left(q^{-4\alpha n}\right).\]

On this event, we have that $ \alpha n\le m-X_\tau\le 2\alpha n$, 
\[X_{2i-1}+X_{2i}=n\text{ for all $1\le i\le \tau$},\]
and
\[\tau\le \sum_{j=0}^{n-\alpha n} (\tau_j'-\tau_j).\]
By \eqref{appr3}, we have
\[\mathbb{E}\sum_{j=0}^{m-2\alpha n} (\tau_j'-\tau_j)= \sum_{j=0}^{m-2\alpha n} (1-P_n(j,n-j)P_n(n-j,j))^{-1}=\sum_{j=0}^{m-2\alpha n} O\left(q^j\right)=O\left(q^{m-2\alpha n}\right).\]

Thus, by Markov's inequality,
\[\mathbb{P}\left(\tau>q^{m-\alpha n}\right)\le O\left(q^{- \alpha n}\right).\qedhere\]
\end{proof}

By almost the same proof, we can obtain the following statement.
\begin{lemma}\label{gettingcloseto0b}
Assume that $|X_0-m|>2\alpha n$. Let $\tau$ be the smallest $i$ such that $|X_{2i}-m|\le 2\alpha n.$ Then \[\mathbb{P}(\tau\le q^{m-\alpha n})=1-O\left(q^{-\alpha n}\right).\]
\end{lemma}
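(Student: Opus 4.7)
My plan is to adapt the proof of Lemma~\ref{gettingcloseto0}, splitting into the two subcases $X_0<m-2\alpha n$ and $X_0>m+2\alpha n$.

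The first subcase is a verbatim rerun: define $\tau_j,\tau_j',\mathcal{A}_j$ exactly as in the proof of Lemma~\ref{gettingcloseto0}, but index $j$ over $[X_0,m-2\alpha n]$ rather than $[0,m-2\alpha n]$. Lemma~\ref{Lemma34} still gives $\mathbb{P}(\mathcal{A}_j^c)=O(q^{2j-n})$, so the union bound yields failure probability $\sum_{j=X_0}^{m-2\alpha n}O(q^{2j-n})=O(q^{-4\alpha n})$, while the expected time $\mathbb{E}\sum_j(\tau_j'-\tau_j)=\sum_jO(q^j)=O(q^{m-2\alpha n})$ via \eqref{appr3}, together with Markov's inequality, gives $\mathbb{P}(\tau>q^{m-\alpha n})=O(q^{-\alpha n})$.

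For the second subcase one needs the mirror analogue of Lemma~\ref{Lemma34}: for $X_{2i-2}=c$ with $c>m$, conditional on the nontrivial-step event $\mathcal{A}$,
\[\mathbb{P}\bigl(X_{2i-2}+X_{2i-1}\ne n\ \text{or}\ X_{2i-1}+X_{2i}\le n-\alpha n\bigm|\mathcal{A}\bigr)=O(q^{n-2c}).\]
Its proof is the proof of Lemma~\ref{Lemma34} with the roles of $c$ and $n-c$ swapped: the leading-order value $\mathbb{P}(\mathcal{A})\approx(q^{-c}+q^{-(n-c)})/(q-1)$ from \eqref{appr3} is symmetric in $c\leftrightarrow n-c$, and for $c>m$ the dominant contribution is $q^{-(n-c)}$, coming from the ``good'' configuration $X_{2i-1}=n-c$, $X_{2i}\in[c-\alpha n,c-1]$. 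All other configurations contribute at most $O(q^{-c})+O(q^{-n})=o(q^{-(n-c)})$, using the tail bound $\sum_{r=0}^{c-\alpha n-1}P_n(n-c,r)=O(q^{-c})$ that one gets from the proof of Lemma~\ref{Lemma34} after the substitution $d\leftrightarrow n-d$. With the mirror lemma in hand, the skeleton of nontrivial steps moves \emph{downward} by an amount in $[1,\alpha n]$ per step, and the same arithmetic as in the first subcase, with $j$ now running from $X_0$ down to $m+2\alpha n$, gives the union-bound failure probability $O(q^{-4\alpha n})$ and the expected-time bound $\sum_j O(q^{n-j})=O(q^{m-2\alpha n})$, hence $\mathbb{P}(\tau>q^{m-\alpha n})=O(q^{-\alpha n})$ by Markov again.

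The main obstacle is the absence of exact $c\leftrightarrow n-c$ symmetry of the kernel $P_n$ at finite $n$, so a pure symmetry reduction of the second subcase to the first is unavailable. The remedy is the observation that the leading-order quantities controlling the skeleton dynamics --- namely $\mathbb{P}(\mathcal{A})$ and the single-step tail sums --- \emph{are} symmetric in $c\leftrightarrow n-c$ at the relevant order, so the auxiliary estimates from the proof of Lemma~\ref{Lemma34} can be reused essentially verbatim to verify the mirror lemma.
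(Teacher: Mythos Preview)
Your proposal is correct and follows essentially the same approach as the paper: split into the two subcases $X_0<m-2\alpha n$ and $X_0>m+2\alpha n$, rerun the proof of Lemma~\ref{gettingcloseto0} verbatim for the first, and for the second invoke exactly the mirror analogue of Lemma~\ref{Lemma34} that you formulate (the paper states it as $\mathbb{P}(X_{2i-1}+X_{2i}\le n-\alpha n\text{ or }X_{2i-2}+X_{2i-1}\neq n\mid\mathcal{A})\le O(q^{2(n-c)-n})$, which agrees with your $O(q^{n-2c})$). Your extra paragraph justifying the mirror lemma via the $c\leftrightarrow n-c$ symmetry of the leading-order estimates is more explicit than the paper's remark that it ``can be proved the same way,'' but the underlying argument is identical.
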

\begin{proof}
    When $X_0<m$, the proof of Lemma~\ref{gettingcloseto0} works. This is also true for the case $X_0>m$, but in place of Lemma~\ref{Lemma34}, we need to use the following statement, which can be proved the same way Lemma~\ref{Lemma34}:
    
     Given any $c$, let $\mathcal{A}$ be the event that
 \begin{itemize}
  \item $X_{2i-2}=c$, and
  \item $X_{2i-2}+ X_{2i-1}\neq n$ or $X_{2i-1}+ X_{2i}\neq n$.
 \end{itemize}
  
 Then
 \[\mathbb{P}\left(X_{2i-1}+ X_{2i}\le n-\alpha n\text{ or }X_{2i-2}+X_{2i-1}\neq n \,|\, \mathcal{A}\right)\le O\left(q^{2(n-c)-n}\right). \qedhere\]
    
\end{proof}

For $0\le a\le n$, let $X_i$ be a Markov chain with transition matrix $P_n$ such that $X_0=a$, and let
\[D'_{a,j}=D'_{a,j,n}=\sum_{i=1}^j (n-X_{2i-1}-X_{2i}).\]

This definition is motivated by the fact that if $X_i=\rang(C_i)-\rang(C_{i-1})$, then
\[\dim\ker C_{2k}=\sum_{i=1}^k (n-X_{2i-1}-X_{2i}).
\]
Thus, we have the following lemma:
\begin{lemma}\label{dimkervsD}
The distribution of $\dim\ker C_{2k}$ is the same as that of $D'_{0,k}$.
 
\end{lemma}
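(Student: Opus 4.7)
The plan is to observe that this is essentially a restatement of the rank-nullity theorem combined with a telescoping sum, once one keeps track of the dimensions of $C_{2k}$.

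First I would note that $C_{2k}$ is a $(k+1)n \times kn$ matrix, so by rank-nullity,
\[
\dim\ker C_{2k} \;=\; kn - \rang(C_{2k}).
\]
Next, by the definition of $X_i$ in \eqref{rankincrementdef} together with the convention $\rang(C_0)=0$, the sum telescopes:
\[
\rang(C_{2k}) \;=\; \sum_{i=1}^{2k} X_i \;=\; \sum_{i=1}^{k}\bigl(X_{2i-1}+X_{2i}\bigr).
\]
Substituting and grouping the $n$'s gives
\[
\dim\ker C_{2k} \;=\; kn - \sum_{i=1}^{k}\bigl(X_{2i-1}+X_{2i}\bigr) \;=\; \sum_{i=1}^{k}\bigl(n - X_{2i-1} - X_{2i}\bigr).
\]

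Finally, I would invoke Lemma~\ref{lemmaMarkov} to identify the joint distribution of $(X_i)_{i\ge 0}$ coming from \eqref{rankincrementdef} with the Markov chain used in the definition of $D'_{a,k}$. Since $\rang(C_0)=0$ forces $X_0=0$, the right-hand side of the previous display is exactly $D'_{0,k}$ in distribution, which finishes the proof.

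There is no real obstacle here: the content of the lemma is just a bookkeeping identity, and all the probabilistic substance has already been absorbed into Lemma~\ref{lemmaMarkov}. The only point worth double-checking is the dimension count for $C_{2k}$ (i.e.\ that it has $(k+1)n$ rows and $kn$ columns, as specified right after \eqref{rankincrementdef}), which is what guarantees that the ``constant'' term in the telescoping sum equals $kn$ and not something else.
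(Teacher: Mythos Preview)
Your proof is correct and is essentially the same as the paper's: the paper derives the identity $\dim\ker C_{2k}=\sum_{i=1}^k(n-X_{2i-1}-X_{2i})$ immediately before stating the lemma, and the lemma is then obtained by identifying (via Lemma~\ref{lemmaMarkov}) the rank-increment process with the Markov chain started at $X_0=0$.
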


On the event that $|n-X_{2i-2}-X_{2i-1}|+|n-X_{2i-1}-X_{2i}|\le 1$, we have
\[n-X_{2i-1}-X_{2i}=\begin{cases}
1&\text{if }X_{2i}=X_{2i-2}-1,\\
0&\text{otherwise.}
\end{cases}
\]

Thus, on the event that $|n-X_{2i-2}-X_{2i-1}|+|n-X_{2i-1}-X_{2i}|\le 1$ for all $1\le i\le k$, we have
\begin{equation}\label{Dpasdown}D_{a,k}'=\sum_{i=1}^k \mathbbm{1}(X_{2i}=X_{2i-2}-1).\end{equation}
The equation above shows the significance of the event \eqref{couplingeventc} in Theorem~\ref{theoremcoupling0}.

The proof of the next lemma is a straightforward combination of Theorem~\ref{theoremcoupling0}, Lemma~\ref{Zcontinuity} and equation \eqref{Dpasdown}.
\begin{lemma}\label{DDprime}
 Assume that $|a-m|\le 2\alpha n$. Given $j$, let $t=\frac{q^{-m}}{q-1} j$, and assume that $t\le q^{\alpha n}$. Let $X_0=a$, $Z_0=a-m$ and consider the coupling provided by Theorem~\ref{theoremcoupling0}. Furthermore, assume that the event in Theorem~\ref{theoremcoupling0} occurs and $Z_{a-m,t}$ has no jumps in the interval $\left[t-q^{-\alpha n},t+q^{-\alpha n}\right]$. Then \[(D'_{a,j},X_{a,2j}-m)=(D_{a-m,t},Z_{a-m,t}).\]

 Furthermore, let $t_0>0$, and assume that $t_0\le t-q^{-\alpha n}$. Then the probability of the event above is at least $1-O((r(t_0)+1)q^{-\alpha n})$.
\end{lemma}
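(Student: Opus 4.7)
The plan is to split the lemma into a deterministic identification on a favourable event and a probability estimate. The favourable event is the intersection of the event in Theorem~\ref{theoremcoupling0} with the event that $Z_{a-m,t}$ has no jump in the interval $[t-q^{-\alpha n},t+q^{-\alpha n}]$. On this event both coordinates of the pair will be matched exactly; off it only a tail bound is needed.

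First I would identify both coordinates on the good event. By conclusion~\eqref{couplingeventc} of Theorem~\ref{theoremcoupling0}, the same case analysis that yielded~\eqref{Dpasdown} shows that for every $1\le i\le Q_{\lfloor q^{2\alpha n}\rfloor}$ one has $X_{2i}-X_{2i-2}\in\{-1,0,1\}$ and $n-X_{2i-1}-X_{2i}=\mathbbm{1}(X_{2i}=X_{2i-2}-1)$. Consequently $D'_{a,j}$ reduces to a count of downward jumps of the skeleton chain $(\widehat{X}_\ell)$ among indices $\ell$ with $Q_\ell\le j$, while $X_{a,2j}-m=\widehat{X}_\ell$ for the unique $\ell$ satisfying $Q_\ell\le j<Q_{\ell+1}$. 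To transport these skeleton descriptions to the corresponding ones for $(D_{a-m,t},Z_{a-m,t})$, I would use conclusion~\eqref{couplingevent2}, namely $|R_\ell-T_\ell|\le q^{-\alpha n}$ with $R_\ell=\frac{q^{-m}}{q-1}Q_\ell$, together with the absence of $Z$-jumps in $[t-q^{-\alpha n},t+q^{-\alpha n}]$, to establish the index translation $\{R_\ell\le t<R_{\ell+1}\}=\{T_\ell\le t<T_{\ell+1}\}$. Conclusion~\eqref{couplingevent3} together with the same no-jump hypothesis forces $\ell+1\le\lfloor q^{2\alpha n}\rfloor$ (otherwise $T_{\lfloor q^{2\alpha n}\rfloor}$ would be forced into the forbidden interval around $t$), so conclusion~\eqref{couplingevent1} supplies $\widehat{X}_{\ell'}=\widehat{Z}_{\ell'}$ for all relevant $\ell'$, yielding $(D'_{a,j},X_{a,2j}-m)=(D_{a-m,t},Z_{a-m,t})$.

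Second I would do the probability estimate by a union bound. Theorem~\ref{theoremcoupling0} already contributes $O(q^{-2\alpha n})$. Lemma~\ref{Zcontinuity} bounds the probability of a $Z$-jump in $[t-q^{-\alpha n},t+q^{-\alpha n}]$ by $(r(t-q^{-\alpha n})+1)\cdot 2q^{-\alpha n}$; by the monotonicity of $r$ and the assumption $t_0\le t-q^{-\alpha n}$, this is at most $2(r(t_0)+1)q^{-\alpha n}=O\bigl((r(t_0)+1)q^{-\alpha n}\bigr)$, which dominates $O(q^{-2\alpha n})$ since $r(t_0)\ge 0$.

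The main obstacle, such as it is, is the index translation between the discrete jump times $Q_\ell$ of $(X_{2i})$ and the continuous jump times $T_\ell$ of $Z$: the coupling only delivers the uniform bound $|R_\ell-T_\ell|\le q^{-\alpha n}$, so without the no-jump hypothesis near $t$ the sets $\{Q_\ell\le j\}$ and $\{T_\ell\le t\}$ need not agree, and the identification of either coordinate could break. Once this translation is pinned down, the remainder is routine bookkeeping combined with the already-established Lemma~\ref{Zcontinuity}.
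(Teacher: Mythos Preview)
Your proposal is correct and follows precisely the route the paper indicates: the paper's own proof is omitted beyond the remark that the lemma is ``a straightforward combination of Theorem~\ref{theoremcoupling0}, Lemma~\ref{Zcontinuity} and equation~\eqref{Dpasdown}'', and you assemble exactly these three ingredients in the natural way. Your handling of the only nontrivial step---the index translation via the no-jump hypothesis, and the verification that $\ell+1\le\lfloor q^{2\alpha n}\rfloor$ so that conclusions~\eqref{couplingevent1}, \eqref{couplingevent2}, \eqref{couplingeventc} apply at the relevant indices---is sound.
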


Part \eqref{thmdimkerC2kpart2} of Theorem~\ref{thmdimkerC2k} follows by combining Lemma~\ref{dimkervsD} with the next lemma.
\begin{lemma}\label{criticalconvergence}
Let $k_n$ be a sequence of positive integers and let $t_n=\frac{q^{-m}}{q-1}k_n$. 

Assume that $\lim_{n\to\infty} t_n=t$, where $0<t<\infty$. Then $(D_{0,k_n,n}',X^{(n)}_{2k_n}-m)$ converges in total variation distance to $(D_{-\infty,t},Z_{-\infty,t}).$ (Here $X^{(n)}$ has the initial state $X^{(n)}_0=0$.) 

\end{lemma}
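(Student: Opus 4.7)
The plan is a two-phase decomposition of the trajectory. First a short warm-up phase drives $X^{(n)}$ (started at $0$) from the ``far from criticality'' regime into the regime $|X-m|\in[\alpha n,2\alpha n]$ where Lemma~\ref{DDprime} applies, contributing nothing to $D'$. Second, by the strong Markov property, the tail of the chain is a fresh chain starting in the useful regime, which I couple to $Z$ via Lemma~\ref{DDprime}; I then pass to the limit $a\to-\infty$ using Lemma~\ref{Dinftyexists} and handle the small time offset via Lemma~\ref{Zcontinuity}.

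Concretely, I apply Lemma~\ref{gettingcloseto0} to produce, on an event $\mathcal{E}_1$ of probability $1-O(q^{-\alpha n})$, a stopping time $\tau\le q^{m-\alpha n}$ with $\alpha n\le m-X^{(n)}_{2\tau}\le 2\alpha n$ and with $X^{(n)}_{2i-1}+X^{(n)}_{2i}=n$ for $1\le i\le\tau$. On $\mathcal{E}_1$ the warm-up contributes $D'_{0,\tau,n}=0$ and the elapsed ``time'' $\frac{q^{-m}}{q-1}\tau\le\frac{q^{-\alpha n}}{q-1}$ is negligible. Writing $b=X^{(n)}_{2\tau}$ and $s_n=\frac{q^{-m}}{q-1}(k_n-\tau)$, we have $s_n\to t$, and by the strong Markov property,
\[
(D'_{0,k_n,n},\,X^{(n)}_{2k_n}-m)=(D'_{b,k_n-\tau,n},\,X^{(n)}_{2k_n}-m),
\]
where on the right-hand side the tail is distributed as a fresh $X$-chain starting from $b$.

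Next, I apply Lemma~\ref{DDprime} to the tail chain with $t_0=t/2$ (legal since $s_n\le 2t<q^{\alpha n}$ eventually): on an additional event $\mathcal{E}_2$ of conditional probability $1-O((r(t/2)+1)q^{-\alpha n})$ one has $(D'_{b,k_n-\tau,n},X^{(n)}_{2k_n}-m)=(D_{b-m,s_n},Z_{b-m,s_n})$, where $Z$ is the coupled continuous-time chain with $Z_0=b-m$. It then remains to identify $(D_{b-m,s_n},Z_{b-m,s_n})$ with $(D_{-\infty,t},Z_{-\infty,t})$ in total variation. Given $\varepsilon>0$, pick $\delta>0$ with $2\delta(r(t/2)+1)<\varepsilon/2$; by Lemma~\ref{Zcontinuity} the probability that $Z$ has a jump in $[t-\delta,t+\delta]$ is at most $\varepsilon/2$, uniformly in the initial state $b-m\in[-2\alpha n,-\alpha n]$, since the function $r$ is universal. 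For $n$ large one has $|s_n-t|<\delta$, so on the no-jump event $(D_{b-m,s_n},Z_{b-m,s_n})=(D_{b-m,t},Z_{b-m,t})$. Finally, because $b-m\le-\alpha n\to-\infty$, Lemma~\ref{Dinftyexists} yields $\dTV((D_{b-m,t},Z_{b-m,t}),(D_{-\infty,t},Z_{-\infty,t}))<\varepsilon/2$ for $n$ sufficiently large. Adding the errors from $\mathcal{E}_1$, $\mathcal{E}_2$, the time offset, and the limit step gives the claimed total variation convergence.

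The main subtlety is the uniform control of all these error contributions across the random initial state $b$: this is what makes the bookkeeping work. Uniformity is automatic because the function $r$ in Lemma~\ref{rexists} does not depend on the law of $Z_0$, the coupling error in Lemma~\ref{DDprime} depends on $b$ only through $t_0$, and the threshold in Lemma~\ref{Dinftyexists} depends only on $\varepsilon$ and $t$. A secondary care point is that the warm-up stopping time $\tau$ is random; the strong Markov property must be invoked to make sense of ``fresh chain starting from $b$'' before applying Lemma~\ref{DDprime}, and one then integrates the resulting conditional total-variation bound over the joint law of $(\tau,b)$ on $\mathcal{E}_1$.
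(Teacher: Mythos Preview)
Your proposal is correct and follows essentially the same route as the paper: a warm-up phase via Lemma~\ref{gettingcloseto0} (contributing nothing to $D'$ since $X_{2i-1}+X_{2i}=n$ there), then the strong Markov property to restart at the random state $b=X_{2\tau}$, followed by the chain of approximations Lemma~\ref{DDprime} $\to$ Lemma~\ref{Zcontinuity} $\to$ Lemma~\ref{Dinftyexists}, integrated over the law of $(\tau,b)$. Your explicit discussion of why all error bounds are uniform in $(\tau,b)$ is exactly the bookkeeping the paper carries out by summing over $(\ell,a)$.
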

\begin{proof}
Assume that the high probability event $\mathcal{A}$ in Lemma~\ref{gettingcloseto0} occurs, and let $(D_{0,k_n}'',X_{2k_n}''-m)$ be distributed as $(D'_{0,k_n},X_{2k_n}-m)$ conditioned on the event $\mathcal{A}$. Since $\mathbb{P}(\mathcal{A})$ tends to $1$,
\begin{equation}\label{firststep}\dTV((D_{0,k_n}',X_{2k_n}-m)\,,\,(D_{0,k_n}'',X_{2k_n}''-m))<\varepsilon\end{equation}
for all large enough $n$.

Let $\tau$ be defined as in Lemma~\ref{gettingcloseto0}. Then
\begin{multline*}\mathbb{P}(D_{0,k_n}''=d, X_{2k}''-m=b)\\=\sum_{\substack{\ell,a\\ \alpha n \le m-a\le 2\alpha n,\quad \ell\le q^{m-\alpha n}}} \mathbb{P}(\tau=\ell,X_{2\tau}=a\,|\,\mathcal{A}) \mathbb{P}(D'_{a,k_n-\ell}=d,X_{a,2(k_n-\ell)}-m=b).\end{multline*}

Let $\varepsilon>0$.

Let us choose $a,\ell$ such that $ \alpha n \le m-a\le 2\alpha n$ and $\ell\le q^{m-\alpha n}$. Let $s=\frac{q^{-m}}{q-1} \ell$.

Provided that $n$ is large enough for all choices of $a,\ell$ as above, we have \begin{align*}
 \dTV((D'_{a,k_n-\ell},X_{a,2(k_n-\ell)}-m)\,,\,(D_{a-m,t_n-s},Z_{a-m,t_n-s}))&<\varepsilon&\text{(by Lemma~\ref{DDprime})},\\
 \dTV((D_{a-m,t_n-s},Z_{a-m,t_n-s})\,,\,(D_{a-m,t},Z_{a-m,t}))&<\varepsilon&\text{(by Lemma~\ref{Zcontinuity} and}\\&&\text{ the fact that $t_n-s-t=o(1)$ )},\\
 \dTV((D_{a-m,t},Z_{a-m,t})\,,\,(D_{-\infty,t},Z_{-\infty,t}))&<\varepsilon&\text{(by Lemma~\ref{Dinftyexists})}.
\end{align*}

Thus,
\begin{multline*}\dTV((D_{0,k_n}'',X''_{2k_n}-m)\,,\,(D_{-\infty,t},Z_{-\infty,t}))\\\le \sum_{\substack{\ell,a\\ \alpha n \le m-a\le 2 \alpha n, \ell\le q^{m-\alpha n}}} \mathbb{P}(\tau=\ell,X_{2\tau}=a\,|\,\mathcal{A}) \dTV((D'_{a,k_n-\ell},X_{a,2(k_n-\ell)}-m)\,,\,(D_{-
\infty,t},Z_{-\infty,t}))\\\le 3\varepsilon.
\end{multline*}

Combining this with~\eqref{firststep}, it follows that
\[\dTV((D_{0,k_n}',X_{2k_n}-m),(D_{-\infty,t},Z_{-\infty,t}))\le 4\varepsilon.\]

Thus, the statement follows. 
\end{proof}

\subsection{The proof of parts~\eqref{theoremtruncatedpart1} and \eqref{theoremtruncatedpart2} of Theorem~\ref{theoremtruncated}}\label{seccriticaltrunc}

Let $\widetilde{C}_i$ obtained from $C_i$ by deleting the first $n/2$ rows of $C_{i}$. Let $X_i^*=\rang \widetilde{C}_{i+1}$. Then
$X_0^*,X_1^*,X_2^*,\dots$ is a Markov chain with transition matrix $P_n$.

For $-m\le u\le m$, $W_{n,u}$ be a random variable independent from $X_0^*,X_1^*,\dots$ such that
\[\mathbb{P}(W_{n,u}=r)=\mathbb{P}(\text{A uniform random $m\times(m-u)$ matrix over $\mathbb{F}_q$ has rank }\min(m,m-u)-r).\]

By Remark~\ref{remarkOthern}, we see that 
$\rang(\widehat{C}_{2k})-\rang(\widetilde{C}_{2k+1})$ conditioned on $X_{2k}^*=m+u$ has the same distribution as the rank of an $m\times (m-u)$ matrix. In other words, $\rang(\widehat{C}_{2k})-\rang(\widetilde{C}_{2k+1})$ has the same distribution as $m-|u|_+-W_{n,u}$.

Thus, $\rang(\widehat{C}_{2k})$ has the same distribution as
\[\sum_{i=0}^{2k}X_i^*+m-|X_{2k}^*-m|_+-W_{n,X_{2k}^*-m}.\]

Thus, $\dim\ker(\widehat{C}_{2k})$ has the same distribution as ${D}_k^*$, where
\begin{align*}{D}_{k}^*&=(k+1)n-\left(\sum_{i=0}^{2k}X_i^*+m-|X_{2k}^*-m|_+-W_{n,X_{2k}^*-m}\right)\\&=m-X_0^*+\sum_{i=0}^{k-1}(n-X_{2i+1}^*-X_{2i+2}^*)+|X_{2k}^*-m|_+ +W_{n,X_{2k}^*-m}.
\end{align*}

Conditioned on the event that $X_0^*=m$, the joint distribution of $X_0^*,X_1^*,\dots$ and ${D}_k^*$ is the same as the joint distribution of $X_0,X_2,\dots$ and \[\widetilde{D}_k=D_{m,k}'+|X_{2k}-m|_++W_{n,X_{2k}-m},\] where $X_0=m$, and $X_0,X_1,\dots$ a Markov chain with transition matrix $P_n$, and as before
\[D_{m,k}'=\sum_{i=1}^{k} (n-X_{2i-1}+X_{2i}).\]

It is straightforward to see that
\[\lim_{n\to\infty}\mathbb{P}(X_0^*=m)=1.\]

Thus,
\begin{equation}\label{DstD}
\lim_{n\to\infty}\dTV(D_{k_n}^*,\widetilde{D}_{k_n})=0.
\end{equation}

Relying on the formula given in Remark~\ref{remarkOthern}, it is straightforward to see that for any fixed $u$, $W_{n,u}$ converge to $J_{|u|}$ in total variation distance as $n\to \infty$. Combining this with Lemma~\ref{DDprime}, we see that $\widetilde{D}_{k_n}$ converges to $L_t$. By \eqref{DstD}, $D_{k_n}^*$ converges to $L_t$ as well. Thus,  parts~\eqref{theoremtruncatedpart1} and \eqref{theoremtruncatedpart2} of Theorem~\ref{theoremtruncated} follow.

\section{Gaussian limits}

\subsection{The proof of part~\eqref{theoremtruncatedpart3} of Theorem~\ref{theoremtruncated}}




We will use the notations of Section~\ref{seccriticaltrunc}.

\begin{lemma}\label{Wsmall}
 For all choices for $u$ and $n$, we have
 \[\mathbb{E}W_{n,u}\le 2.\]
\end{lemma}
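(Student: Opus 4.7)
The plan is to reduce $W_{n,u}$ to the kernel dimension of a suitable random matrix, then control its expectation by a one-line first-moment computation. First I would unpack the definition: $W_{n,u} = \min(m,m-u) - \rang(M)$ where $M$ is a uniform random $m\times(m-u)$ matrix over $\mathbb{F}_q$. When $u\ge 0$ the matrix is tall, and since $\min(m,m-u)=m-u$ is the number of columns, $W_{n,u}=\dim\ker M$. When $u<0$ the matrix is wide, but since the entries are i.i.d.\ uniform, $M^T$ is again a uniform random matrix (of size $(m-u)\times m$), and $W_{n,u}= m - \rang(M) = m - \rang(M^T) = \dim\ker M^T$. Either way, $W_{n,u}=\dim\ker N$ for a uniform random $a\times b$ matrix $N$ with $a\ge b$ and $a-b = |u|$.

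Next I would compute $\mathbb{E} q^{W_{n,u}}$ using the identity $|\ker N| = q^{\dim\ker N}$ together with linearity of expectation. Since $Nv$ is uniformly distributed on $\mathbb{F}_q^a$ for every nonzero $v\in\mathbb{F}_q^b$,
\[\mathbb{E} q^{W_{n,u}} \;=\; \mathbb{E}|\ker N| \;=\; \sum_{v\in\mathbb{F}_q^b}\mathbb{P}(Nv=0) \;=\; 1 + (q^b-1)q^{-a} \;\le\; 1 + q^{-|u|} \;\le\; 2.\]

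Finally, I would invoke the elementary inequality $q^k \ge k+1$ valid for all integers $k\ge 0$ and all $q\ge 2$ (immediate by induction, or from $q^k\ge 2^k\ge k+1$). Applying this pointwise to $k = W_{n,u}$ and taking expectations yields
\[\mathbb{E} W_{n,u} \;\le\; \mathbb{E} q^{W_{n,u}} - 1 \;\le\; q^{-|u|} \;\le\; 1 \;\le\; 2,\]
which is the claim (in fact with a factor of two to spare). There is no genuine obstacle: the only mild subtlety is the transposition step needed to unify the cases $u\ge 0$ and $u<0$, after which the argument is a one-line first-moment calculation combined with the trivial bound $q^k\ge k+1$.
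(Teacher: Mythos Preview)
Your proof is correct and follows essentially the same route as the paper: both reduce $W_{n,u}$ to $\dim\ker N$ for a uniform random tall matrix (using transposition when $u<0$), then bound the expected kernel dimension by the expected kernel size via the first-moment identity $\mathbb{E}|\ker N|=1+(q^b-1)q^{-a}$. The only difference is that you use the slightly sharper pointwise inequality $k\le q^k-1$ rather than $k\le q^k$, yielding the bound $\mathbb{E}W_{n,u}\le q^{-|u|}\le 1$ in place of the paper's~$2$.
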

\begin{proof}
First, assume that $u\ge 0$. Let $B$ be a uniform random $m\times (m-u)$ matrix over $\mathbb{F}_q$. Then
\[\mathbb{E}W_{n,u}=\mathbb{E}\dim\ker B\le \mathbb{E}|\ker B|=1+(q^{m-u}-1)q^{-m}\le 2.\]

Next, assume that $u< 0$. Let $B$ be a uniform random $(m-u)\times m$ matrix over $\mathbb{F}_q$. Then
\[\mathbb{E}W_{n,u}=\mathbb{E}\dim\ker B\le \mathbb{E}|\ker B|=1+(q^{m}-1)q^{-(m-u)}\le 2.\qedhere\]
\end{proof}

Combining Lemma~\ref{Wsmall} with Lemma~\ref{tight}, we see that as $n$ goes through the even integers, the sequence of random variables
\[\widetilde{D}_{k_n}-D_{m,k_k}'=|X_{2k}-m|_++W_{n,X_{2k}-m}\]
is tight. If we combine this with \eqref{DstD}, we see that it is enough to prove that there is a constant $\sigma>0$ such that if $\lim_{n\to\infty}t_n=\infty$, then
\begin{equation}\label{enoughnormal}\frac{D_{m,k_n}'-\mu_n t_n}{\sigma \sqrt{t_n}}
\end{equation}
converges in distribution to a standard normal variable.

In the next few lemmas, our aim is to find the analogues of Lemma~\ref{noexplosion} and Corollary~\ref{corPoissonmoment} for the Markov chains $X^{(n)}$. 

First, let us introduce some terminology:
\begin{itemize}
\item We say that we have a simple step at $i$, if $X_{2i}+X_{2i+1}=n$ or $X_{2i+1}+X_{2i+2}=n$.

\item We say that we have a simple step away from $m$ at $i$, if we have a simple step at $i$ and $m\le X_{2i}< X_{2i+2}$ or $m\ge X_{2i}> X_{2i+2}$.

\item We say that we have a simple step towards $m$ at $i$, if we have a simple step at $i$ and $m\le X_{2i+2}< X_{2i}$ or $m\ge X_{2i+2}> X_{2i}$.

\item We say that we have an exceptional step at $i$, if we do not have a simple step at $i$ or $X_{2i}<m<X_{2i+2}$ or $X_{2i+2}<m<X_{2i}$.
\end{itemize}
Let
\begin{align*}
 A_i&=\mathbbm{1}(\text{we have a simple step away from $m$ at $i$}) |X_{2i+2}-X_{2i}|,\\ 
 B_i&=\mathbbm{1}(\text{we have a simple step towards $m$ at $i$}) |X_{2i+2}-X_{2i}|,\\
 C_i&=\mathbbm{1}(\text{we have an exceptional step at $i$})\left( |n-X_{2i+1}-X_{2i}|+|n-X_{2i+2}-X_{2i+1}|\right).
\end{align*}

It is straightforward to see that
\begin{equation}\label{ineqc1}|X_{2k}-m|+\sum_{i=0}^{k-1} B_i\le |X_0-m|+\sum_{i=0}^{k-1} (A_i+C_i).
\end{equation}

Also,

\begin{equation}\label{eqc1}X_0+\sum_{i=0}^{k-1}(n-X_{2i}-X_{2i+1})=X_{2k}+\sum_{i=0}^{k-1}(n-X_{2i+1}-X_{2i+2}).
\end{equation}

Thus,
\begin{align*}
2\sum_{i=0}^{k-1}(n-X_{2i+1}-X_{2i+2})&= X_0-X_{2k}+\sum_{i=0}^{k-1}(n-X_{2i+1}-X_{2i+2})+\sum_{i=0}^{k-1}(n-X_{2i}-X_{2i+1})\\&=X_0-X_{2k}+\sum_{i=0}^{k-1}(A_i+B_i+C_i)\\&\le |X_0-m|+X_0-|X_{2k}-m|-X_{2k}+2\sum_{i=0}^{k-1}(A_i+C_i)\\&=2|X_0-m|_+-2|X_{2k}-m|_++2\sum_{i=0}^{k-1}(A_i+C_i),
\end{align*}
where the first equality follows from \eqref{eqc1}, and the inequality follows from \eqref{ineqc1}.
Thus, if $a\le m$, then
\begin{equation}\label{DAC}D'_{a,k}\le \sum_{i=0}^{k-1}(A_i+C_i).
\end{equation}


\begin{lemma}\label{Hexsits}
There is a random variable $H=H^{(n)}$ such that
\[\mathbb{E}|H|^j=O\left(q^{-m}\right)\text{ for all }j=1,2,3,4,\]
and
\[\sum_{i=0}^{k-1} (A_i+C_i) \text{ is stochastically dominated by }\sum_{i=0}^{k-1} H_i,\]
where $H_0,H_1,\dots,H_{k-1}$ are i.i.d. copies of $H$.

\end{lemma}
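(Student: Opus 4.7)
My starting observation is that $A_i$ and $C_i$ cannot both be positive: if $A_i>0$ the step is simple and ``away from $m$'', which by the definitions precludes crossing $m$, hence the step is not exceptional and $C_i=0$; conversely if $C_i>0$ the step is exceptional, hence either non-simple or simple-with-crossing, and in both sub-cases the ``away from $m$'' condition fails, so $A_i=0$. Consequently $A_i+C_i=\max(A_i,C_i)$ and
\[\mathbb{P}(A_i+C_i\ge L\mid X_{2i}=d)=\mathbb{P}(A_i\ge L\mid X_{2i}=d)+\mathbb{P}(C_i\ge L\mid X_{2i}=d)\qquad(L\ge 1).\]
The plan is to bound each summand uniformly in $d$, take $H$ to have the worst-case tail, check the moments by direct summation, and assemble the i.i.d.\ coupling by the quantile method.

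For the two tails I parametrize a two-step transition by $a:=n-X_{2i}-X_{2i+1}$ and $b:=n-X_{2i+1}-X_{2i+2}$ (both nonnegative by the support of $P_n$), and use the estimate $P_n(d,r)\le C_0\,q^{-(n-d-r)(n-r)}$ read off directly from \eqref{lemmaMarkoveq}; this yields
\[P_n(d,n-d-a)\,P_n(n-d-a,d+a-b)\le C_0^2\,q^{-a(d+a)-b(n-d-a+b)}.\]
A simple step away from $m$ has exactly one of $a,b$ positive with the corresponding direction constraint; for $d\ge m$, $b=0$, the exponent is $a(d+a)\ge am$, and symmetrically for $d\le m$, so summing over step size $\ge L$ gives $\mathbb{P}(A_i\ge L\mid X_{2i}=d)\le C_1 q^{-Lm}$ uniformly in $d$. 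For $C_i$ I treat the two sub-cases separately: the non-simple case has $a,b\ge 1$ and the exponent at the dominant pair $(a,b)=(1,1)$ equals $(d+1)+(n-d)=n+1$ for \emph{every} $d$, so the whole double sum is $O(q^{-n})$; the simple-crossing case forces the step size to exceed $|d-m|$, and plugging this into the above bound gives exponent at least $(|d-m|+1)(m+1)\ge 2(m+1)=n+2$. Since $C_i$ is integer-valued and bounded by $2n$ and the tail is decreasing in $L$, I conclude $\mathbb{P}(C_i\ge L\mid X_{2i}=d)\le C_2 q^{-n}$ for every $L\ge 1$ and every $d$.

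Combining, $\mathbb{P}(A_i+C_i\ge L\mid X_{2i}=d)\le f(L):=\min\bigl(1,\,C_3 q^{-Lm}+C_3 q^{-n}\bigr)$, and I define $H$ by $\mathbb{P}(H\ge L)=f(L)$ for $1\le L\le 2n$ and $\mathbb{P}(H\ge L)=0$ for $L>2n$. The moment computation is a one-line sum:
\[\mathbb{E}\,H^j\le j\sum_{L=1}^{2n}L^{j-1}f(L)\le O(q^{-m})+O(n^{j}q^{-n})=O(q^{-m})\]
for every fixed $j$, since $n=2m$ makes $n^j q^{-n}=o(q^{-m})$. For the coupling I take an independent sequence $U_i$ of uniforms on $[0,1]$, set $H_i:=F_H^{-1}(U_i)$ (so the $H_i$ are i.i.d.\ with the prescribed marginal), and at step $i$ sample $(X_{2i+1},X_{2i+2})$ using $U_i$ together with fresh independent randomness so that $A_i+C_i=F_d^{-1}(U_i)$, where $F_d$ is the conditional CDF of $A_i+C_i$ given $X_{2i}=d$; the pointwise domination $F_d\ge F_H$ then yields $A_i+C_i\le H_i$ almost surely, and summing gives the required stochastic domination. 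The main technical step I expect is verifying uniformly in $d$ the $O(q^{-n})$ bound on the $C_i$ tail, as one has to control both the non-simple and simple-crossing sub-cases and the boundary behavior at $d\in\{0,n\}$ where the feasible range of $(a,b)$ degenerates.
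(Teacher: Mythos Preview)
Your approach is correct and essentially the same as the paper's: bound the conditional tail of $A_i+C_i$ uniformly in $d$ via the two-step estimate $P_n(d,n-d-a)\,P_n(n-d-a,d+a-b)=O\bigl(q^{-a(d+a)-b(n-d-a+b)}\bigr)$, then define $H$ by the worst-case tail and couple by quantiles (the paper leaves this last step implicit). The paper organizes the computation by point masses $\mathbb{P}(=j)$ rather than tails and extracts the sharper non-simple bound $O(q^{-n-j^2/4})$ via the identity
\[a(d+a)+b(n-d-a+b)=ad+b(n-d)+\tfrac34(a-b)^2+\tfrac14(a+b)^2\ge n+\tfrac14(a+b)^2\qquad(a,b\ge1);\]
your uniform $O(q^{-n})$ bound on $\mathbb{P}(C_i\ge L)$ is cruder but, together with the cap $H\le 2n$, still gives $\mathbb{E}H^j=O(q^{-m})$.

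One small point to tighten: ``the dominant pair is $(1,1)$, so the whole double sum is $O(q^{-n})$'' needs a word on why the remaining terms do not accumulate. The identity above supplies this directly (the sum of $q^{-(a+b)^2/4}$ over $a,b\ge1$ converges). The naive count of at most $n^2$ terms, each $\le q^{-(n+1)}$, gives only $O(n^2q^{-n})$, which is not $O(q^{-n})$; however this still suffices for your moment computation, since $n^{j}\cdot n^2 q^{-n}=o(q^{-m})$ for every fixed $j$.
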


\begin{proof}
First assume that $d> m$. Then for $j\ge 1$, we have
\[\mathbb{P}(A_i=j\,|\,X_{2i}=d)=P_n(d,n-d-j)P_n(n-d-j,d+j)=O(q^{-j(d+j)})=O(q^{-jm}).\]
Next assuming that $d<m$, for all $j\ge 1$, we have
\[\mathbb{P}(A_i=j\,|\,X_{2i}=d)=P_n(d,n-d)P_n(n-d-j,d-j)=O(q^{-j(n-d+j)})=O(q^{-jm}).\]
Finally,
\[\mathbb{P}(A_i=j\,|\,X_{2i}=m)=P_n(m,m)P_n(m,m-j)+P_n(m,m-j)P(m-j,m+j)=O(q^{-jm}).\]

Thus, 
\[\mathbb{P}(A_i=j)=O(q^{-jm}).\]

For any $j\ge 2$, we have
\begin{align*}
\mathbb{P}&(C_i=j\text{ and we do not have a simple step at $i$ }|\,X_{2i}=d)\\&=\sum_{\ell=1}^{j-1} P_n(d,n-d-\ell)P_n(n-d-\ell,d+2\ell-j)\\&=\sum_{\ell=1}^{j-1} O(q^{-\ell(d+\ell)-(j-\ell)(n-d-2\ell+j)}).
\end{align*}
Combining this with the fact that
\begin{align*}
 -\ell(d+\ell)-(j-\ell)(n-d-2\ell+j)&=-\ell d-(j-\ell)(n-d)-\frac{3(2\ell-j)^2}4-\frac{j^2}4\\&\le -n-\frac{3(2\ell-j)^2}4-\frac{j^2}4,
\end{align*}
we obtain that
\[\mathbb{P}(C_i=j\text{ and we do not have a simple step at $i$ }|\,X_{2i}=d)=O\left(q^{-n-\frac{j^2}4}\right).\]

If $C_i=j$ and we have a simple step at $i$, then we must have $X_{2i}=d$ for some $d$ such that $0<|d-m|<j$. Assume that $d$ is like that. First, consider the case that $d>m$. Then $n-d+j>m$, so
\begin{align*}
\mathbb{P}(C_i=j\text{ and we have a simple step at $i$ }|\,X_{2i}=d)&= P_n(d,n-d)P_n(n-d,d-j)\\&=O(q^{-j(n-d+j)})\\&=O(q^{-jm}). 
\end{align*}

Similarly, if $d<m$, then $d+j>m$, so
\begin{align*}
\mathbb{P}(C_i=j\text{ and we have a simple step at $i$ }|\,X_{2i}=d)&= P_n(d,n-d-j)P_n(n-d,d+j)\\&=O(q^{-j(d+j)})\\&=O(q^{-jm}). 
\end{align*}

Thus, there is a non-negative integer valued random variable $H$ such that 
\[\mathbb{P}(H=j)=\begin{cases}
 O\left(q^{-m}\right)&\text{for $j=1$},\\
 O\left(q^{-n-\frac{j^2}4}+q^{-jm}\right)&\text{for $j\ge 2$},
\end{cases}
\]
and
\[\sum_{i=0}^{k-1} (A_i+C_i) \text{ is stochastically dominated by }\sum_{i=0}^{k-1} H_i,\]
where $H_0,H_1,\dots,H_{k-1}$ are i.i.d. copies of $H$.

It is straightforward to check that $H$ satisfies $\mathbb{E}|H|^j=O\left(q^{-m}\right)$ for all $j=1,2,3,4$.
\end{proof}

\begin{lemma}\label{finitePoissonmoment}
Assume that $k\ge q^m$, and $a\le m$. Then

\[\left\|(D'_{a,k})^{j}\right\|_{\frac{4}3}=O(k^3q^{-3m})\text{ for all $j=0,1,2,3$}.
\]
\end{lemma}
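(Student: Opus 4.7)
The plan is to reduce the problem to moment estimates for an i.i.d.\ sum, using the stochastic domination already established in this section. Since $a \le m$, inequality \eqref{DAC} gives $D'_{a,k} \le \sum_{i=0}^{k-1}(A_i+C_i)$, and Lemma~\ref{Hexsits} provides a stochastic domination of this latter quantity by $S := \sum_{i=0}^{k-1} H_i$, where the $H_i$ are i.i.d.\ copies of a nonnegative random variable $H$ with $\mathbb{E} H^\ell = O(q^{-m})$ for $\ell = 1,2,3,4$. Since $x \mapsto x^{4j/3}$ is monotone increasing on $[0,\infty)$, stochastic domination transfers to the relevant moments, so it will suffice to show $\|S^j\|_{4/3} = O(k^3 q^{-3m})$ for $j = 0,1,2,3$.

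The main computational step is to bound $\mathbb{E} S^4$. My plan is to expand $S^4 = \sum_{i_1,i_2,i_3,i_4} H_{i_1} H_{i_2} H_{i_3} H_{i_4}$ and group the tuples according to the partition of $\{1,2,3,4\}$ induced by equalities among the indices. Independence then reduces each group to a product of moments $\mathbb{E} H^\ell$, and counting tuples gives
\begin{align*}
\mathbb{E} S^4 &= k\,\mathbb{E} H^4 + 4k(k-1)\,\mathbb{E} H\cdot\mathbb{E} H^3 + 3k(k-1)(\mathbb{E} H^2)^2 \\
&\quad + 6k(k-1)(k-2)\,\mathbb{E} H^2\,(\mathbb{E} H)^2 + k(k-1)(k-2)(k-3)(\mathbb{E} H)^4.
\end{align*}
Substituting $\mathbb{E} H^\ell = O(q^{-m})$, the five terms on the right are $O(kq^{-m}),\, O(k^2q^{-2m}),\, O(k^2q^{-2m}),\, O(k^3q^{-3m}),\, O(k^4q^{-4m})$, respectively. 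The hypothesis $k \ge q^m$, i.e., $kq^{-m} \ge 1$, is exactly what makes the last (fully decorrelated) term dominate, so $\mathbb{E} S^4 = O(k^4 q^{-4m})$.

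To conclude, I would invoke the monotonicity of $L^r$ norms on a probability space: for $0 < r \le s$, $(\mathbb{E} S^r)^{1/r} \le (\mathbb{E} S^s)^{1/s}$. Taking $r = 4j/3$ (with $j \in \{1,2,3\}$) and $s = 4$ yields $\|S^j\|_{4/3} = (\mathbb{E} S^{4j/3})^{3/4} \le (\mathbb{E} S^4)^{j/4} = O(k^j q^{-jm})$. For $j = 3$ this is exactly the desired bound; for $j \in \{1,2\}$ one has $k^j q^{-jm} = k^3 q^{-3m}\cdot (kq^{-m})^{j-3} \le k^3 q^{-3m}$, again using $kq^{-m}\ge 1$. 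The case $j = 0$ is trivial: $\|1\|_{4/3} = 1 \le k^3 q^{-3m}$ by the same hypothesis. I do not expect any genuine obstacle in this argument; the whole proof is a moment-bookkeeping exercise for an i.i.d.\ sum, with the only substantive input being the hypothesis $k \ge q^m$, which is precisely what forces the $k^4(\mathbb{E} H)^4$ term to control $\mathbb{E} S^4$.
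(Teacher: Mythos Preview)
Your proposal is correct and follows essentially the same approach as the paper: reduce via \eqref{DAC} and Lemma~\ref{Hexsits} to the i.i.d.\ sum $S=\sum H_i$, expand $\mathbb{E}S^4$ by partition type, and use $k\ge q^m$ to show $\mathbb{E}S^4=O(k^4q^{-4m})$. The only cosmetic difference is in the last step: the paper exploits that $D'_{a,k}$ is a nonnegative integer to bound $x^{4j/3}\le x^4$ directly, whereas you use the monotonicity of $L^r$ norms; your route is slightly cleaner (it does not use integrality) and even yields the sharper intermediate bound $\|S^j\|_{4/3}=O(k^jq^{-jm})$ before discarding it.
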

\begin{proof} 
Let $H_i$ be as in Lemma~\ref{Hexsits}. Expanding the product
\[\mathbb{E}\left(\sum_{i=0}^{k-1} H_i\right)^4,\]
we obtain 
\begin{itemize}
 \item $k$ terms of the the form $\mathbb{E}H_i^4$. The total contribution of these terms is $O(kq^{-m})$.
 \item $O(k^2)$ terms of the form $\mathbb{E}H_i^3 H_j$. The total contribution of these terms is $O(k^2q^{-2m})$.
 \item Furthermore, the terms of the form $H_i^2H_j^2,\,H_i^2H_jH_h,\, H_iH_jH_hH_g$ have total contribution $O(k^2q^{-2m}),\,O(k^3q^{-3m}),\,O(k^4q^{-4m})$, respectively.
\end{itemize}
Thus, using the assumption that $k\ge q^m$, we see that

\begin{equation}\label{eqH4}\mathbb{E}\left(\sum_{i=0}^{k-1} H_i\right)^4\le O(k^4 q^{-4m}).\end{equation}

The statement of the lemma is clear for $j=0$. Thus, assume that $j>0$. Combining \eqref{DAC}, Lemma~\ref{Hexsits} and \eqref{eqH4}, we obtain
\[\left\|(D'_{a,k})^{j}\right\|_{\frac{4}3}=\left(\mathbb{E} (D'_{a,k})^{\frac{4j}3}\right)^{\frac{3}4}\le \left( \mathbb{E}\left(\sum_{i=0}^{k-1} H_i\right)^4\right)^{\frac{3}4}\le O(k^3 q^{-3m}).\qedhere\]
\end{proof}

Assume that $Z_0=0$. Let $h_{\infty,0}=0$, and for $i>0$, let
\[h_{\infty,i}=\min\{j>h_{\infty,i-1}\,:\, \widehat{Z}_j=0\}.\]
Then, let 
\[U_{\infty,i}=T_{h_{\infty,i}}-T_{h_{\infty,i-1}},\]
that is, $U_{\infty,i}$ is the length of the $i$th excursion from $0$.

Let $X_i=X_i^{(n)}$ be a Markov chain with transition matrix $P_n$ such that $X_0=m$. Recall that $R_i, Q_i$ and $\widehat{X}^{(n)}_i$ were defined in \eqref{Ridef}, \eqref{Qidef} and \eqref{hatXdef}, respectively.

Let $h_{n,0}=0$, and for $i>0$, let
\[h_{n,i}=\min\{j>h_{n,i-1}\,:\, \widehat{X}_{2j}^{(n)}=0\}.\]
Then let 
\[U_{n,i}=R_{h_{n,i}}-R_{h_{n,i-1}},\]

We also define $(\Delta D)_{\infty,i}$ as the number of down steps made by $Z_t$ during its $i$th excursion from~$0$, more formally,
\[(\Delta D)_{\infty,i}=D_{0,T_{h_{\infty,i}}}-D_{0,T_{h_{\infty,i-1}}},\]
and

\[(\Delta D)_{n,i}={D}'_{m,Q_{h_{n,i}},n}-{D}'_{m,Q_{h_{n,i-1}},n}.\]

We also set
\[U_{\infty}=U_{\infty,1},\quad U_{n}=U_{n,1},\quad (\Delta D)_{\infty}=(\Delta D)_{\infty,1},\quad (\Delta D)_{n}=(\Delta D)_{n,1}.\]

Finally, let
\[V_{\infty,i}= (\Delta D)_{\infty,i}-\mu U_{\infty,i}, \quad\text{ where }\mu=\frac{\mathbb{E} (\Delta D)_\infty}{\mathbb{E} U_\infty},\]
and
\[V_{n,i}= (\delta D)_{n,i}-\mu_n U_{n,i}, \quad\text{ where }\mu_n=\frac{\mathbb{E} (\Delta D)_n}{\mathbb{E} U_n}.\]

Note that at this point it is unclear that the above definition of $\mu_n={\mathbb{E} (\Delta D)_n}/{\mathbb{E} U_n}$ indeed agrees with the one given in \eqref{mundef}. However, once we prove that \eqref{enoughnormal} holds with the choice of $\mu_n={\mathbb{E} (\Delta D)_n}/{\mathbb{E} U_n}$, it follows that the two definitions of $\mu_n$ must be the same. Indeed, it is clear from the Markov chain Central Limit Theorem and Lemma~\ref{lemmaMarkov} that \eqref{enoughnormal} can be only true for the constant $\mu_n$ given in \eqref{mundef}. 

Using the notations $V_{\infty}=V_{\infty,1}$ and $V_{n}=V_{n,1}$, we see that
\begin{itemize}
 \item The sequence $U_{\infty,1},U_{\infty,2},\dots$ consists of i.i.d copies of $U_\infty$.
 
 \item The sequence $U_{n,1},U_{n,2},\dots$ consists of i.i.d copies of $U_n$.

 \item The sequence $V_{\infty,1},V_{\infty,2},\dots$ consists of i.i.d copies of $V_\infty$.
 
 \item The sequence $V_{n,1},V_{n,2},\dots$ consists of i.i.d copies of $V_n$.
\end{itemize}

Note that it follows from the definitions of $V_\infty$ and $V_n$ that
\[\mathbb{E}V_{\infty}=0\quad\text{ and }\quad\mathbb{E}V_n=0\text{ for all }n.\]
\begin{lemma}\label{Utail}
There are constants $b>1$ and $c<1$ such that
\[\mathbb{P}(U_\infty>bj)\le c^j \quad\text{ and }\quad \mathbb{P}(U_n>bj)\le c^j\text{ for all positive integers $j$ and $n$}.\]

Moreover, for
\[\tau_\infty=\inf\{t>0\,:\, Z_t=0\}\quad\text{ and }\quad\tau_n=\inf\{i>0\,:\, X^{(n)}_{2i}=m\},\]
we have
\[\mathbb{P}(\tau_\infty>bj)\le c^j\quad \text{ and }\quad\mathbb{P}\left(\frac{q^{-m}}{q-1}\tau_n>bj\right)\le c^j\]
 for all positive integers $j$ and $n$ regardless the distribution of $Z_t$ and $X_0^{(n)}$.
\end{lemma}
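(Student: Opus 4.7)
The plan is to establish the bounds via a Foster--Lyapunov drift argument on the embedded (jump) skeletons, then convert to continuous or rescaled time by controlling the holding-time distributions.

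First, consider the embedded chain $\widehat{Z}$ with Lyapunov function $\phi(a) = \exp(\lambda |a|)$ for a small constant $\lambda = \lambda(q) > 0$. From any state $a \neq 0$, $\widehat{Z}$ moves toward $0$ with probability $p_-(a) = q^{|a|}/(q^{|a|}+q^{-|a|}) \geq q^2/(q^2+1)$ and away from $0$ otherwise. A direct computation shows
\[
\mathbb{E}[\phi(\widehat{Z}_{j+1}) \mid \widehat{Z}_j = a] \leq \gamma \phi(a) \qquad \text{for all } a \neq 0,
\]
for some $\gamma = \gamma(q) \in (0,1)$ and all sufficiently small $\lambda$. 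Iterating and stopping at the hitting time $N$ of $0$ gives $\mathbb{P}(N > k \mid \widehat{Z}_0 = a) \leq \gamma^k \phi(a)/\phi(1)$. For $\widehat{X}^{(n)}$, the one-step transition probabilities of the embedded chain are governed by \eqref{approx2} and \eqref{approx3}, which differ from those of $\widehat{Z}$ by a multiplicative factor $\exp(O(q^{-\min(d,n-d)}))$; this factor is uniformly bounded in $n$ and $d$ by some constant $K=K(q)$. After shrinking $\lambda$ so that $\gamma K < 1$, the same Lyapunov inequality persists, yielding an analogous geometric tail on the number of embedded steps before $\widehat{X}^{(n)}$ hits $0$. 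Contributions from ``exceptional'' multi-step transitions are handled via Lemma~\ref{Lemma34}, which shows they are rare enough not to destroy the drift.

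Next, I would convert the bounds on the number of embedded steps into bounds on $U_\infty$, $U_n$, $\tau_\infty$, $\tau_n$. The holding time of $Z_t$ at state $a$ is exponential of rate $q^{|a|} + q^{-|a|} \geq 2$, so for $U_\infty$ the total time is stochastically dominated by a geometric-length sum of independent $\mathrm{Exp}(2)$ variables; combining the geometric jump-count tail with a Chernoff bound on the exponential sum gives $\mathbb{P}(U_\infty > bj) \leq c^j$. For arbitrary $Z_0 = a$, the chain has very fast holding times at large states: from $a$ the time to reach $\{|z| \leq 1\}$ is stochastically dominated by $\sum_{k=1}^{|a|} E_k\, q^{-k}$ with $E_k$ i.i.d.\ $\mathrm{Exp}(1)$, which has exponential tails uniformly in $|a|$; together with the bound near the origin this yields the claim for $\tau_\infty$. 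For the finite chain, by \eqref{appr25} the rescaled inter-jump time $\frac{q^{-m}}{q-1}(Q_{i+1} - Q_i)$ given $\widehat{X}^{(n)}_i = a$ is stochastically comparable (up to a bounded factor) to $\mathrm{Exp}(q^{|a|}+q^{-|a|})$, so the conversion argument transfers verbatim and yields the bounds for $U_n$ and $\tau_n$ with constants independent of $n$.

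The main obstacle I anticipate is handling $\widehat{X}^{(n)}$ in the boundary regime where $d$ is close to $0$ or $n$. There \eqref{approx2}--\eqref{approx3} only control the transition probabilities up to a bounded multiplicative constant (rather than a factor close to $1$), and the error bound of Lemma~\ref{Lemma34} becomes non-trivial only on one side of $m$. In this regime I would argue directly from the exact expressions for $P_n$, noting essentially via Remark~\ref{remarkI} that from states extreme relative to $m$ the chain is rapidly pulled back to the centre, giving a stronger effective drift than the generic estimate. Keeping all constants uniform in $n$ while combining these boundary estimates with the interior Lyapunov argument will be the bulk of the bookkeeping, though the underlying mechanism remains the Foster--Lyapunov drift condition.
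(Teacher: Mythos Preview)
Your Foster--Lyapunov route for $U_\infty$ is sound, but the argument for $\tau_\infty$ with arbitrary $Z_0=a$ has a genuine gap. The drift bound $\mathbb{P}(N>k\mid \widehat Z_0=a)\le \gamma^k\phi(a)/\phi(1)$ is \emph{not} uniform in $a$, and your patch---that the time to reach $\{|z|\le 1\}$ from $a$ is stochastically dominated by $\sum_{k=1}^{|a|}E_k q^{-k}$---is asserted without proof and is not obvious: the chain can step away from $0$, so the hitting time is not simply the direct-path time. What does hold, and is exactly the paper's argument, is that the \emph{direct} path $a\to a-1\to\cdots\to 0$ is taken with probability $\prod_{i=1}^{|a|}\frac{q^{i}}{q^{i}+q^{-i}}\ge c_0>0$ uniformly in $a$, and conditionally on that event the hitting time has expectation $\sum_{i=1}^{|a|}(q^i+q^{-i})^{-1}\le b_0$ uniformly in $a$; Markov's inequality then gives a uniform positive probability of reaching $0$ within time $2b_0$, and the strong Markov property turns this into $\mathbb{P}(\tau_\infty>2b_0 j)\le(1-c_0/2)^j$. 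This single regeneration step handles $U_\infty$ and $\tau_\infty$ simultaneously, with no Lyapunov function needed.

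For the finite chain the paper also takes a much shorter route than the one you sketch. Rather than re-running a drift argument on $\widehat X^{(n)}$, controlling exceptional multi-step transitions via Lemma~\ref{Lemma34}, and then patching the boundary regime $d\approx 0$ or $d\approx n$ by hand, the paper simply reduces to the $Z_t$ statement already proved: from any $X_0^{(n)}$, Lemma~\ref{gettingcloseto0b} brings $X^{(n)}_{2i}$ into the window $|X_{2i}-m|\le 2\alpha n$ in rescaled time $o(1)$ with probability $1-O(q^{-\alpha n})$, and then the coupling of Theorem~\ref{theoremcoupling} transfers the uniform-in-start bound for $\tau_\infty$ directly to $\frac{q^{-m}}{q-1}\tau_n$. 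All the boundary bookkeeping you anticipate is thereby absorbed into machinery the paper has already built.
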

\begin{proof}
 First, we prove the statement for $U_\infty$ and $\tau_\infty$. It is enough to prove that there is $b$ and $c<1$ such that regardless the value of $Z_0$, we reach $0$ in time $b$ with at least probability $1-c$. If $Z_0=0$ it is meant that we first need to leave $0$. 

 We may assume that $Z_0\neq 0$. Because with positive probability we leave $0$ in constant time. By symmetry, we may assume that $Z_0=a>0$. Consider the event that
 \[\widehat{Z}_i=a-i\text{ for all }i=1,2,\dots,a.\]
 The probability of this event is
 \[\prod_{i=0}^{a-1} \exp\left(-O\left(q^{-2(a-i)}\right)\right)\ge \exp\left(-O\left(\sum_{i=0}^\infty q^{-2i}\right)\right)\ge c_0,\]
 for some positive constant $c_0$ not depending on $a$. Conditioned on this event, the expected time to reach $0$ is
 \[\sum_{i=0}^{a-1} O(q^{-(a-i)})\le O\left(\sum_{i=0}^\infty q^{-i}\right)<b_0,\]
 for some positive constant $b_0$ not depending on $a$. 
 Thus, by Markov's inequality, conditioned on the event above, we reach $0$ in time at most $2b_0$ with probability at least $\frac{1}2$. Thus, with probability at least $\frac{c_0}2$, we reach $0$ is time $2b_0$.

 To prove the statement on $U_n$ and $\tau_n$, it is enough to prove that there is $b$ and $c$ such that regardless the value of $X^{(n)}_0$, 
 \[\mathbb{P}(\min\{i\,:\,X^{(n)}_{2i}=m\}<b(q-1)q^m)\ge 1-c.\]
 Using Lemma~\ref{gettingcloseto0b} and Theorem~\ref{theoremcoupling}, this can be reduced to the already established statement about~$Z_t$.
\end{proof}

\begin{lemma}\label{Ljapunovcond}
For all $j,h\in\{0,1,2,3\}$, we have
\[\lim_{n\to\infty} \mathbb{E}U_n^j(\Delta D)_n^h =\mathbb{E}U_\infty^j(\Delta D)_\infty^h<\infty.\]

Consequently,

\[\lim_{n\to \infty} \Var(V_n)=\Var(V_\infty),\]
and there an $M<\infty$ such that
\[\Var(U_\infty)<M,\quad \mathbb{E}|V_\infty|^3<M\quad\text{and}\quad\Var(U_n)<M,\quad\mathbb{E}|V_n|^3<M\text{ for all }n.\]
\end{lemma}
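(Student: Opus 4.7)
The plan is to combine the coupling of Theorem~\ref{theoremcoupling0} with uniform-in-$n$ moment bounds on the excursion statistics. The coupling will yield convergence in probability of $(U_n,(\Delta D)_n)$ to $(U_\infty,(\Delta D)_\infty)$, and the moment bounds will upgrade this to convergence of every mixed moment of order at most three.

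First I would initialise $X^{(n)}_0=m$ so that $\widehat{X}^{(n)}_0=0=Z_0$, and invoke Theorem~\ref{theoremcoupling0}. By Lemma~\ref{Utail}, $U_\infty$ has exponential tails, so with probability $1-o(1)$ the first excursion of $\widehat{Z}$ from $0$ uses fewer than $q^{2\alpha n}$ jumps and has length at most~$q^{\alpha n}$. On this event together with the coupling event, $h_{n,1}=h_{\infty,1}$, $|U_n-U_\infty|\le q^{-\alpha n}$, and property~\eqref{couplingeventc} combined with~\eqref{Dpasdown} forces $(\Delta D)_n$ to count precisely the downward jumps of $\widehat{Z}$ during its first excursion, so $(\Delta D)_n=(\Delta D)_\infty$ exactly. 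Therefore $(U_n,(\Delta D)_n)\to(U_\infty,(\Delta D)_\infty)$ in probability.

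For uniform integrability, Lemma~\ref{Utail} already controls all moments of $U_n$ and $U_\infty$ uniformly. For $(\Delta D)_n$, I would use $(\Delta D)_n\le D'_{m,K}$ with $K=Q_{h_{n,1}}=(q-1)q^m U_n$. By~\eqref{DAC} and Lemma~\ref{Hexsits}, conditionally on $K$, $D'_{m,K}$ is stochastically dominated by $\sum_{i=0}^{K-1}H_i$ for i.i.d.\ $H_i$ with $\mathbb{E}|H|^p=O(q^{-m})$ for $p\le 4$. A multinomial expansion yields $\mathbb{E}\bigl[(\sum_0^{K-1}H_i)^h\mid K\bigr]=O\!\left(\sum_{\ell=1}^hK^\ell q^{-\ell m}\right)$; substituting $K=(q-1)q^m U_n$ and taking expectation over $U_n$ (all of whose moments are uniformly bounded) gives $\sup_n\mathbb{E}(\Delta D)_n^h<\infty$ for every fixed $h$. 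The analogous bound on $\mathbb{E}(\Delta D)_\infty^h$ follows from Lemma~\ref{noexplosion}, which dominates the downward-jump count over time $U_\infty$ by $\mathrm{Pois}(2U_\infty)$.

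Hölder's inequality then turns these moment bounds into a uniform bound on $\mathbb{E}[U_n^j(\Delta D)_n^h]^{1+\varepsilon}$ for some $\varepsilon>0$ and all $j,h\in\{0,1,2,3\}$, which together with the convergence in probability of $(U_n,(\Delta D)_n)$ yields $\mathbb{E}U_n^j(\Delta D)_n^h\to\mathbb{E}U_\infty^j(\Delta D)_\infty^h$. The remaining assertions follow routinely: since $\mathbb{E}U_\infty>0$ we have $\mu_n=\mathbb{E}(\Delta D)_n/\mathbb{E}U_n\to\mu$, and expanding $V_n=(\Delta D)_n-\mu_nU_n$ as a polynomial in the two convergent sequences gives $\Var V_n\to\Var V_\infty$ and the claimed uniform bound $M$ on $\Var U_\cdot$ and $\mathbb{E}|V_\cdot|^3$. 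I expect the main technical obstacle to be the multinomial/Hölder computation that converts the per-step moment bound $\mathbb{E}|H|^p=O(q^{-m})$ into an $O(1)$ bound on $\mathbb{E}(\Delta D)_n^h$ after averaging over the random excursion length $K$ of order $q^m$; the factors $q^{m\ell}$ arising from $K^\ell$ must cancel cleanly against the $q^{-\ell m}$ per-step moments, which requires keeping track of the correct combinatorial constants in the expansion.
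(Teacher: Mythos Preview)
Your overall strategy---couple via Theorem~\ref{theoremcoupling0} to get $(U_n,(\Delta D)_n)\to(U_\infty,(\Delta D)_\infty)$, then upgrade to moment convergence through uniform moment bounds---is exactly the paper's, and the coupling part of your argument is fine. The gap is in your moment bound for $(\Delta D)_n$. You write that ``conditionally on $K$, $D'_{m,K}$ is stochastically dominated by $\sum_{i=0}^{K-1}H_i$'' and then compute $\mathbb{E}[(\sum_{i<K}H_i)^h\mid K]$ by a multinomial expansion. But Lemma~\ref{Hexsits} only gives stochastic domination of $\sum_{i<k}(A_i+C_i)$ by $\sum_{i<k}H_i$ for each \emph{deterministic} $k$; it does not furnish a coupling in which the $H_i$ are simultaneously i.i.d.\ and independent of the random return time $K=Q_{h_{n,1}}$, which is a functional of the same trajectory that determines the $A_i,C_i$. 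So the conditional multinomial expansion is not justified, and the cancellation you flag as the ``main technical obstacle'' cannot even be set up as written.

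The paper avoids this dependence entirely by slicing over deterministic levels of the excursion length and separating factors with H\"older. Concretely, for the finiteness of $\mathbb{E}U_\infty^j(\Delta D)_\infty^h$ it writes
\[
\mathbb{E}U_\infty^j(\Delta D)_\infty^h\le \sum_{\ell\ge 0}((\ell+1)b)^j\,\mathbb{P}(U_\infty>\ell b)^{1/4}\,\|D_{0,(\ell+1)b}^h\|_{4/3},
\]
using Lemma~\ref{Utail} for the tail factor and Corollary~\ref{corPoissonmoment} for the last norm (which is for a deterministic time). The same decomposition, with Lemma~\ref{finitePoissonmoment} replacing Corollary~\ref{corPoissonmoment}, handles the $U_n,(\Delta D)_n$ terms on the bad event $\mathcal{A}^c$ in the proof of convergence. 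This gives the required uniform-in-$n$ control without ever conditioning on the random excursion length. If you rework your uniform-integrability step along these lines, the rest of your outline goes through.
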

\begin{proof}

 Combining Lemma~\ref{Utail}, H\"{o}lder's inequality and Lemma~\ref{corPoissonmoment}, we obtain that 
\begin{align*}\mathbb{E}U_\infty^j(\Delta D)_\infty^h&\le \sum_{\ell=0}^\infty ((\ell+1)b)^j\mathbb{E}\mathbbm{1}(\ell b<U_\infty\le (\ell+1) b ) D_{0,(\ell+1) b}^h\\&\le \sum_{\ell=0}^\infty ((\ell+1)b)^j\left(\mathbb{P}(\ell b<U_\infty\le (\ell+1) b )\right)^{
\frac{1}4}\| D_{0,(\ell+1) b}^h\|_{\frac{4}3}\\&=\sum_{\ell=0}^\infty O\left(\ell^3\right) O\left(c^{\frac{\ell}4}\right)O(\ell^3)<\infty. 
\end{align*}

 Let us consider the coupling provided by Theorem~\ref{theoremcoupling0} with the choice of $a=0$. The lemma will follow once we prove that under this coupling, we have
 \begin{equation}\label{ucoup}
     \lim_{n\to\infty}\mathbb{E}|U_\infty^j (\Delta D)_\infty^h-U_n^j (\Delta D)_n^h|=0.\end{equation}
 
 Let $\mathcal{A}$ be the intersection of the high probability event given in Theorem~\ref{theoremcoupling0} and the event that $U_\infty\le q^{0.1\alpha n}$. By Theorem~\ref{theoremcoupling0} and Lemma~\ref{Utail} this event has probability at least $1-O(q^{-2n\alpha})$. Let $\mathcal{A}^c$ be the complement of the event $\mathcal{A}$. Then
\begin{multline}\label{triang}\mathbb{E}|U_\infty^j (\Delta D)_\infty^h-U_n^j (\Delta D)_n^h|\\\le \mathbb{E}\mathbbm{1}(\mathcal{A})|U_\infty^j (\Delta D)_\infty^h-U_n^j (\Delta D)_n^h|+\mathbb{E}\mathbbm{1}(\mathcal{A}^c)U_\infty^j (\Delta D)_\infty^h+\mathbb{E}\mathbbm{1}(\mathcal{A}^c)U_n^j (\Delta D)_n^h.\end{multline}

  On the  event $\mathcal{A}$, we have \begin{equation}\label{Ubecs}|U_\infty^j-U_n^j|=O(q^{-0.8\alpha n}).\end{equation}
  Indeed, this is trivial for $j=0$, and for $j>0$, on the event $\mathcal{A}$, we have 
  \[|U_\infty^j-U_n^j|\le |U_\infty-U_n| O(|U|_\infty^{j-1})=O(q^{-\alpha n})O(q^{0.2\alpha n})=O(q^{-0.8\alpha n}).\] 
  Note that on the event $\mathcal{A}$, we have $(\Delta D)_\infty=(\Delta D)_n$. Combining this with \eqref{Ubecs} and Lemma~\ref{corPoissonmoment}, we see that 
  \begin{align}\label{tri1}\mathbb{E}\mathbbm{1}(\mathcal{A})|U_\infty^j (\Delta D)_\infty^h-U_n^j (\Delta D)_n^h|&=\mathbb{E}\mathbbm{1}(\mathcal{A})|U_\infty^j -U_n^j |(\Delta D)_\infty^h\\&\le O(q^{-0.8\alpha n})\mathbb{E}D_{0,q^{0.1\alpha n}}^h\nonumber\\&=O(q^{-0.5\alpha n}).\nonumber\end{align}

 Combining H\"{o}lder's inequality and Lemma~\ref{corPoissonmoment}, we see that  \begin{align}\label{Uih1}\mathbb{E}\mathbbm{1}\left(\mathcal{A}^c\wedge \left(U_\infty\le q^{0.05\alpha n} \right)\right) U_\infty^j (\Delta D)_\infty^h&\le q^{0.15\alpha n} \mathbb{E}\mathbbm{1}\left(\mathcal{A}^c\right)  D_{0,q^{0.05\alpha n}}^h\\&\le q^{0.15\alpha n}\left(\mathbb{P}\left(\mathcal{A}^c\right)\right)^{\frac{1}4}\|D_{0,q^{0.05\alpha n}}^h\|_{\frac{4}3}\nonumber\\&=O(q^{-0.2\alpha n}).\nonumber\end{align}

Let $\ell_{0}(n)=\lfloor b^{-1} q^{0.05\alpha n}\rfloor$, where $b$ is the constant provided by Lemma~\ref{Utail}. Then  combining Lemma~\ref{Utail}, H\"{o}lder's inequality and Lemma~\ref{corPoissonmoment}, we obtain
\begin{align}\label{Uih2}\mathbb{E}\mathbbm{1}\left(\mathcal{A}^c\wedge \left(U_\infty> q^{0.05\alpha n} \right)\right) U_\infty^j (\Delta D)_\infty^h&\le \sum_{\ell=\ell_0(n)}^\infty \mathbb{E}\mathbbm{1}(\ell b<U_\infty\le (\ell+1) b )  U_\infty^j (\Delta D)_\infty^h\\&\le \sum_{\ell=\ell_0(n)}^\infty  ((\ell+1)b)^j\left(\mathbb{P}(\ell b<U_\infty\le (\ell+1) b )\right)^{
\frac{1}4}\| D_{0,(\ell+1) b}^h\|_{\frac{4}3}\nonumber\\&=\sum_{\ell=\ell_0(n)}^\infty O\left(\ell^3\right) O\left(c^{\frac{\ell}4}\right)O(\ell^3)=o(1)\nonumber\end{align}
by the fact that $\ell_0(n)$ tends to infinity.

Combining \eqref{Uih1} and \eqref{Uih2}, we see that
\begin{equation}\label{tri2}
\lim_{n\to\infty}\mathbb{E}\mathbbm{1}\left(\mathcal{A}^c\right) U_\infty^j (\Delta D)_\infty^h=0.
\end{equation}

A similar argument gives that
\begin{equation}\label{tri3}
\lim_{n\to\infty}\mathbb{E}\mathbbm{1}\left(\mathcal{A}^c\right) U_n^j (\Delta D)_n^h=0,
\end{equation}
the only difference is that we need to rely on Lemma~\ref{finitePoissonmoment} instead of Corollary~\ref{corPoissonmoment} .

Combining \eqref{triang}, \eqref{tri1}, \eqref{tri2} and \eqref{tri3}, we obtain \eqref{ucoup}.
 \end{proof}

We set
\[N_n=\left\lfloor\frac{t_n}{\mathbb{E} U_n}\right\rfloor.\]

Note that $N_n\to \infty$. Combining Lemma~\ref{Ljapunovcond} with the Lyapunov Central Limit Theorem, we obtain the following lemma.
\begin{lemma}\label{lemmaVnormal}
The random variables
\[\frac{1}{\sqrt{N_n \Var(V_n)}}\sum_{i=1}^{N_n} V_{n,i}\]
converges in distribution to a standard normal random variable as $n\to\infty$.
\end{lemma}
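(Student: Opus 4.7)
The plan is a direct application of the Lyapunov Central Limit Theorem for triangular arrays to the array $(V_{n,i})_{1 \le i \le N_n}$. For each fixed $n$, the random variables $V_{n,1}, V_{n,2}, \ldots$ are i.i.d.\ copies of $V_n$, centered ($\mathbb{E} V_n = 0$) with finite variance $\sigma_n^2 := \Var(V_n)$, by the definition of $V_n$ and Lemma~\ref{Ljapunovcond}. It therefore suffices to verify the Lyapunov condition with exponent $2+\delta = 3$:
\[\lim_{n\to\infty} \frac{1}{(N_n \sigma_n^2)^{3/2}} \sum_{i=1}^{N_n} \mathbb{E}|V_{n,i}|^{3} \;=\; \lim_{n\to\infty} \frac{\mathbb{E}|V_n|^{3}}{\sqrt{N_n}\,\sigma_n^{3}} \;=\; 0.\]

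First I would read off from Lemma~\ref{Ljapunovcond} the three inputs we need: $\mathbb{E}|V_n|^3 < M$ uniformly in $n$; $\sigma_n^2 \to \Var(V_\infty)$; and $\mathbb{E} U_n \to \mathbb{E} U_\infty \in (0,\infty)$. Combined with the hypothesis $t_n \to \infty$ and the definition $N_n = \lfloor t_n/\mathbb{E} U_n\rfloor$, the last of these gives $N_n \to \infty$. So provided $\Var(V_\infty) > 0$, the denominator $\sqrt{N_n}\,\sigma_n^3$ tends to $\infty$ while the numerator stays bounded, the Lyapunov condition holds, and the CLT delivers the claimed convergence.

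The only point that needs any argument beyond applying Lemma~\ref{Ljapunovcond} is the non-degeneracy $\Var(V_\infty)>0$, which I would establish by contradiction. If $\Var(V_\infty)=0$, then $V_\infty = 0$ almost surely (since $\mathbb{E} V_\infty = 0$), which rearranges to $(\Delta D)_\infty = \mu\,U_\infty$ a.s. But $(\Delta D)_\infty$ takes values in $\mathbb{Z}_{\ge 0}$, while $U_\infty$ has an absolutely continuous distribution: the excursion begins with the exponential holding time of $Z$ at $0$, which has a density on $(0,\infty)$. A discrete random variable cannot equal a constant multiple of an absolutely continuous random variable almost surely, giving the required contradiction.

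There is essentially no substantive obstacle here; once Lemma~\ref{Ljapunovcond} is available, this lemma is a bookkeeping exercise in applying the standard triangular-array Lyapunov CLT, and the only subtlety is the brief non-degeneracy check above.
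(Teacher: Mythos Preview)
Your proposal is correct and follows the same approach as the paper, which simply states that the lemma follows by combining Lemma~\ref{Ljapunovcond} with the Lyapunov Central Limit Theorem. Your write-up is in fact more complete: the paper does not explicitly verify $\Var(V_\infty)>0$, whereas you supply the natural argument (noting that $(\Delta D)_\infty$ is integer-valued while $U_\infty$ is absolutely continuous, so $(\Delta D)_\infty=\mu\,U_\infty$ a.s.\ is impossible).
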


Let 
\[N_n'=\min\left\{i\,:\,\sum_{j=1}^i U_{n,j}\ge t_n\right\}.\]

Since $\sup \Var(U_n)<\infty$ by Lemma~\ref{Ljapunovcond}, it follows easily from Chebyshev's in equality that $\frac{N_n'}{N_n}$ converges to $1$ in probability. Thus, there is a sequence $\varepsilon_n$ such that $\lim_{n\to\infty} \varepsilon_n=0$ and \begin{equation}\label{NpinN}\lim_{n\to\infty}\mathbb{P}(N_n'\in [(1-\varepsilon_n)N_n,(1+\varepsilon_n)N_n])=1.\end{equation}

Note that on the event that $N_n'\le N_n$, we have

\begin{multline*}\left|\frac{1}{\sqrt{N_n \Var(V_n)}}\left({D}'_{m,k_n}-\mu_n t_n\right)- \frac{1}{\sqrt{N_n \Var(V_n)}}\sum_{i=1}^{N_n} V_{n,i}\right|\\\le \frac{1}{\sqrt{N_n \Var(V_n)}} \left(\left|\sum_{i=N_n'}^{N_n} V_{n,i}\right| +(\Delta D)_{n,N_n'}+\mu_n U_{n,N_n'}\right),
\end{multline*}
and on the event $N_n'>N_n$, we have
\begin{multline*}\left|\frac{1}{\sqrt{N_n \Var(V_n)}}\left({D}'_{m,k_n}-\mu_n t_n\right)- \frac{1}{\sqrt{N_n \Var(V_n)}}\sum_{i=1}^{N_n} V_{n,i}\right|\\\le \frac{1}{\sqrt{N_n \Var(V_n)}} \left(\left|\sum_{i=N_n+1}^{N_n'} V_{n,i}\right| +(\Delta D)_{n,N_n'}+\mu_n U_{n,N_n'}\right).
\end{multline*}

Therefore, on the even that $N_n'\in [(1-\varepsilon_n)N_n,(1+\varepsilon_n)N_n]$, we have
\begin{align}\label{NNp}&\left|\frac{1}{\sqrt{N_n \Var(V_n)}}\left({D}'_{m,k_n}-\mu_n t_n\right)- \frac{1}{\sqrt{N_n \Var(V_n)}}\sum_{i=1}^{N_n} V_{n,i}\right|\\&\quad\le \frac{1}{\sqrt{N_n \Var(V_n)}} \left(\max_{(1-\varepsilon_n)N_n\le j\le N_n}\left|\sum_{i=j}^{N_n} V_{n,i}\right|+\max_{N_n<j\le (1+\varepsilon_n)N_n }\left|\sum_{i=N_n+1}^{j} V_{n,i}\right|\right)\nonumber\\& \qquad+\frac{1}{\sqrt{N_n \Var(V_n)}}\max_{(1-\varepsilon_n)N_n\le j \le(1+\varepsilon_n)N_n}\left((\Delta D)_{n,j}+\mu_n U_{n,j}\right)\nonumber.
\end{align}

It follows from Kolmogorov's maximal inequality that 
\begin{equation}\label{to01}\frac{1}{\sqrt{N_n \Var(V_n)}} \left(\max_{(1-\varepsilon_n)N_n\le j\le N_n}\left|\sum_{i=j}^{N_n} V_{n,i}\right|+\max_{N_n<j\le (1+\varepsilon_n)N_n }\left|\sum_{i=N_n+1}^{j} V_{n,i}\right|\right)\to 0\text{ in probability.}\end{equation}

Moreover, by Lemma~\ref{Ljapunovcond}, we have $\sup \mathbb{E}|(\Delta D)_{n,1}+\mu_n U_{n,1}|^2<\infty$. Thus,
\begin{align*}\mathbb{P}&\left(\max_{(1-\varepsilon_n)N_n\le j \le(1+\varepsilon_n)N_n}\left((\Delta D)_{n,j}+\mu_n U_{n,j}\right)\ge \varepsilon_n^{0.4} \sqrt{N_n}\right)\\&\le O(\varepsilon_n N_n) \mathbb{P}\left((\Delta D)_{n,1}+\mu_n U_{n,1}\ge \varepsilon_n^{0.4} \sqrt{N_n}\right)\\&\le O(\varepsilon_n N_n) \frac{\mathbb{E}|(\Delta D)_{n,1}+\mu_n U_{n,1}|^2}{\varepsilon_n^{0.8} N_n}\\&=O(\varepsilon_n^{0.2})=o(1).\end{align*}

Therefore,
\begin{equation}\label{to02}
    \frac{1}{\sqrt{N_n \Var(V_n)}}\max_{(1-\varepsilon_n)N_n\le j \le(1+\varepsilon_n)N_n}\left((\Delta D)_{n,j}+\mu_n U_{n,j}\right)\text{ converge to $0$ in probability.}
    \end{equation}

Thus, combining \eqref{NpinN}, \eqref{NNp}, \eqref{to01}, \eqref{to02} and  Lemma~\ref{lemmaVnormal}, we obtain that
\begin{equation}\label{almostfullGauss}\frac{1}{\sqrt{N_n \Var(V_n)}}\left({D}'_{m,k_n}-\mu_n t_n\right)\end{equation}
converges in distribution to a standard normal random variable as $n\to\infty$.

Finally, using Lemma~\ref{Ljapunovcond}, we obtain that
\[\lim_{n\to\infty}\frac{N_n \Var(V_n)}{t_n}=\lim_{n\to\infty}\frac{\Var(V_n)}{\mathbb{E} U_n}=\sigma^2,\]
where
\[\sigma^2=\frac{\Var(V_\infty)}{\mathbb{E} U_\infty}.\]

Thus, \eqref{enoughnormal} follows from \eqref{almostfullGauss}.

\subsection{The asymptotic value of $\mu_n$ -- The proof of \eqref{munas}}

For $i\ge 0$, we have
\begin{align*}
\frac{\pi_n(m+i)}{\pi_n(m)}&=q^{-i^2}\prod_{j=0}^{i-1}(1-q^{-m+j})^2(1-q^{-m-j-1})^{-1}\\
&=q^{-i^2}\exp\left(-\sum_{j=0}^{i-1}\left(2q^{-m+j}+O\left(q^{-2(m-j)}\right)-q^{-m-j-1}+O\left(q^{-2(m+j+1)}\right)\right)\right)\\
&=q^{-i^2}\exp\left(-\sum_{j=0}^{i-1}\left(2q^{-m+j}-q^{-m-j-1}\right)+O\left(q^{-2(m-i)}\right)\right)\\&=
q^{-i^2}\left(1-\sum_{j=0}^{i-1}\left(2q^{-m+j}-q^{-m-j-1}\right)+O\left(q^{-2(m-i)}\right)\right).
\end{align*}

A similar argument gives that for $i\ge 0$, we have
\[\frac{\pi_n(m-i)}{\pi_n(m)}=q^{-i^2}\left(1-\sum_{j=0}^{i-1}\left(q^{-m+j}-2q^{-m-j-1}\right)+O\left(q^{-2(m-i)}\right)\right),\]
and so
\begin{align*}\frac{\pi_n(m-i)-\pi_n(m+i)}{\pi_n(m)}&=q^{-i^2}\left(\sum_{j=0}^{i-1}\left(q^{-m+j}+q^{-m-j-1}\right)+O\left(q^{-2(m-i)}\right)\right)\\&=q^{-i^2-m}\frac{q^i-q^{-i}}{q-1}+O\left(q^{-i^2+2i-n}\right).
\end{align*}

By \eqref{mundef}, we have
\begin{align*}
\frac{\mu_n}{\pi_n(m)}&=\frac{(q-1)q^m}{\pi_n(m)} \left(n-2\sum_{i=0}^{n} \pi_n(i)i\right)\\&=2(q-1)q^m\sum_{i=1}^m \left(\frac{\pi_n(m-i)-\pi_n(m+i)}{\pi_n(m)}\right)i\\&
=2(q-1)q^m\sum_{i=1}^m \left(q^{-i^2-m}\frac{q^i-q^{-i}}{q-1}+O(q^{-i^2+2i-n})\right)i\\&=
2\sum_{i=1}^m q^{-i^2}(q^i-q^{-i})i +O(q^{-m})\\&
=2\sum_{i=1}^\infty q^{-i(i-1)}+O(q^{-m}).
\end{align*}

The above argument also gives that for $i\ge 1$, we have
\[\frac{\pi_n(m-i)+\pi_n(m+i)}{\pi_n(m)}=2q^{-i^2}+O(q^{-i^2-m+i}).\]

Thus,
\begin{align*}\frac{1}{\pi_n(m)}&=1+\sum_{i=1}^m\frac{\pi_n(m-i)+\pi_n(m+i)}{\pi_n(m)}\\&=1+2\sum_{i=1}^m q^{-i^2} +O(q^{-m})\\&=1+2\sum_{i=1}^\infty q^{-i^2} +O(q^{-m}).\end{align*}

Therefore,
\[\mu_n=\frac{2 \sum_{i=1}^{\infty} q^{-i(i-1)}}{1+2\sum_{i=1}^\infty q^{-i^2}}+O(q^{-m}).\]

\subsection{The proof of part~\eqref{thmdimkerC2kpart3} of Theorem~\ref{thmdimkerC2k}}

By Lemma~\ref{dimkervsD}, we need to prove that
\begin{equation}\label{GaussasD}
 \frac{D'_{0,k_n,n}-\mu_n t_n}{\sigma \sqrt{t_n}}
\end{equation}
converges to a standard normal variable.

Let $X_i$ be a Markov chain with transition matrix $P_n$ and initial condition $X_0=0$. Let 
\[\tau=\min(\tau_0,k_n),\text{ where }\tau_0=\min\{i\,:\,X_{2i}=m\}.\]

Then $D'_{0,k_n}$ has the same distribution as
\[D'_{0,\tau}+D'_{m,k_n-\tau},\]
where $D'_{m,k_n-\tau}$ is independent from $(X_i)$. Let us choose $\varepsilon_n$ such that \[\lim_{n\to\infty}\varepsilon_n\sqrt{t_n} =0\text{ and }\lim_{n\to\infty}\varepsilon_n t_n=\infty.\]

Combining the condition $\lim_{n\to\infty}\varepsilon_n t_n=\infty$ with Lemma~\ref{Utail}, we obtain that \[\lim_{n\to\infty} \mathbb{P}(\tau\le \varepsilon_n k_n)=1.\]

Then by \eqref{enoughnormal}, conditioned on the event that $\tau\le \varepsilon_n k_n$, we see that
\[\frac{D'_{m,k_n-\tau}-\mu_n\frac{q^{-m}}{q-1}(k_n-\tau)}{\sigma\sqrt{t_n}}\]
converges in distribution to a standard normal variable.

Thus, to obtain the theorem it is enough to prove that
\[\frac{D'_{0,\tau}}{\sqrt{t_n}}\quad\text{ and }\quad\frac{\mu_n\frac{q^{-m}}{q-1}\tau}{\sqrt{t_n}}\quad\text{both converge to $0$ in probability.}\]

The first statement will follow once we prove that $\mathbb{E}D'_{0,\tau}<K$ for some constant $K$ not depending on $n$. Indeed, combining Lemma~\ref{Utail}, Lemma~\ref{finitePoissonmoment} and H\"{o}lder's inequality, we see that 
\begin{align*}
 \mathbb{E}D'_{0,\tau}&\le \sum_{\ell=0}^\infty \mathbb{E}\mathbbm{1}\left(\ell b<\frac{q^{-m}}{q-1}\tau_0\le (\ell+1)b\right) D'_{0,(\ell+1) b (q-1)q^m}
 \\ & \le\sum_{\ell=0}^\infty \left(\mathbb{P}\left(\ell b<\frac{q^{-m}}{q-1}\tau_0\le (\ell+1)b\right)\right)^{\frac{1}{4}}\left \|D'_{0,(\ell+1) b (q-1)q^m}\right\|_{\frac{4}3}\\& \le\sum_{\ell=0}^\infty O\left(c^{\frac{\ell}4}\ell^3\right)<K
\end{align*}
for some $K$ independent from $n$.

Also on the high probability event $\tau<\varepsilon_n k_n$, we have
\[\frac{\mu_n\frac{q^{-m}}{q-1}\tau}{\sqrt{t_n}}\le \mu_n\varepsilon_n\sqrt{t_n}=o(1).\]

\section{The localization/delocalization phase transition}

\subsection{The proof of part \eqref{thmdimkerC2kpart1} of Theorem~\ref{thmdimkerC2k}}
Let
\[\mathcal{C}=\{v\in \mathbb{F}_q^{kn}\,:\,\supp(v)\text{ is a non empty interval }\{a,a+1,\dots,b\}\,\}.\]

Note that if $v\in \mathbb{F}_q^{kn}$ is vector such that $\supp(v)=\{a,a+1,\dots,b\}$. Then $C_{2k}v$ is a uniform random element of
\[\{u\in \mathbb{F}_q^{(k+1)n}\,:\,\supp(u)\subset\{a,a+1,\dots,b+1\}\,\}.\]
Thus,
\[\mathbb{P}(C_{2k}v=0)=q^{-(b-a+2)n}.\]

Furthermore, note that the number of vectors $v\in \mathbb{F}_q^{kn}$ such that $\supp(v)=\{a,a+1,\dots,b\}$ is $(q^n-1)^{b-a+1}\le q^{(b-a+1)n}$.

Therefore,
\[
 \sum_{v\in \mathcal{C}}\mathbb{P}(C_{2k}v=0)=\sum_{1\le a\le b\le k} \sum_{\substack{v\in \mathbb{F}_q^{kn}\\\supp(v)=\{a,a+1,\dots,b\}}} \mathbb{P}(C_{2k}v=0)\le \sum_{1\le a\le b\le k} q^{(b-a+1)n} q^{-(b-a+2)n}\le k^2 q^{-n}.
\]

Note that if $k=k_n$ is such that $kq^{-n/2}$ tends to $0$ as $n$ tend to $\infty$, then by Markov's inequality with probability tending to $1$, we have $\mathcal{C}\cap \ker C_{2k}$ is empty. The statement follows by using the following lemma:
\begin{lemma}\label{Lemma44}
 If $\ker C_{2k}$ contains a nonzero vector $v$, then it is also contains a vector $v'\in \mathcal{C}$. 
\end{lemma}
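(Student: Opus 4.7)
The plan is to produce $v'$ by restricting $v$ to a single maximal run of consecutive indices in its support. Writing $v=(v_1,\ldots,v_k)$ with $v_i\in\mathbb{F}_q^n$, I would partition $\supp(v)$ into its maximal intervals of consecutive integers and pick any such interval $[a,b]\subseteq\{1,\ldots,k\}$; by maximality, $a,a+1,\ldots,b$ all lie in $\supp(v)$, while $a=1$ or $v_{a-1}=0$, and $b=k$ or $v_{b+1}=0$. Then I would set $v'_i=v_i$ for $i\in[a,b]$ and $v'_i=0$ otherwise, so that $\supp(v')=[a,b]$ and hence $v'\in\mathcal{C}$.

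It remains to check $C_{2k}v'=0$. The block lower bidiagonal structure means that the $i$-th block row of $C_{2k}u$ equals $A_{i,i-1}u_{i-1}+A_{i,i}u_i$, with the convention that $A_{1,0}$ and $A_{k+1,k+1}$ are absent. I would then sweep through the block rows: for $i$ such that neither $i-1$ nor $i$ lies in $[a,b]$, both $v'_{i-1}$ and $v'_i$ vanish and the row is trivially zero; for $i$ with both $i-1$ and $i$ in $[a,b]$, the row of $C_{2k}v'$ agrees with the corresponding row of $C_{2k}v$ and therefore vanishes.

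The only delicate cases are the two boundaries. At $i=a$, if $a\ge 2$ then maximality forces $v_{a-1}=0$, so the identity $A_{a,a-1}v_{a-1}+A_{a,a}v_a=0$ coming from $C_{2k}v=0$ collapses to $A_{a,a}v_a=0$, which is precisely the $a$-th block row of $C_{2k}v'$; the case $a=1$ is immediate since the first block row consists only of the $A_{1,1}v_1$ term. The block row $i=b+1$ is handled symmetrically: when $b<k$, maximality yields $v_{b+1}=0$ and the same cancellation applies, and when $b=k$ the last block row already contains only the $A_{k+1,k}v_k$ term. This gives $v'\in\mathcal{C}\cap\ker C_{2k}$, proving the lemma.

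There is no genuine obstacle here: the argument is a direct structural observation exploiting the bidiagonal sparsity pattern. The only thing one needs to be careful about is the boundary bookkeeping at $i=a$ and $i=b+1$ together with the special treatment of the first and last block rows of $C_{2k}$, and this is precisely where the maximality of the chosen interval (guaranteeing $v_{a-1}=0$ and $v_{b+1}=0$ whenever these coordinates exist) gets used.
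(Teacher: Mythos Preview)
Your proof is correct and follows essentially the same idea as the paper's: restrict $v$ to a maximal interval of consecutive indices in its support and use the bidiagonal block structure to verify that the restricted vector still lies in the kernel. The paper simply singles out the leftmost maximal interval (taking $a=\min\supp(v)$ and cutting at the first gap) and leaves the block-row verification implicit, whereas you allow any maximal interval and spell out the boundary cases explicitly; the arguments are otherwise identical.
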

\begin{proof}
 Let $a=\min \supp(v)$, and let $c=\min\{c>a\,:\, c\notin \supp(v)\}$. Let us define $v'$ as
 \[v_i'=\begin{cases}
 v_i&\text{if $i<c$},\\
 0&\text{if $i\ge c$}.
 \end{cases}\]

It is straightforward to see that $v'\in\mathcal{C}$ and  $C_{2k}v=0$ is only possible if $C_{2k}v'=0$.
\end{proof}
\subsection{The delocalized phase -- The proof of part \eqref{partdeloc} of Theorem~\ref{thmlocdeloc}}

Let
\[\mathcal{C}_n=\{v\in \mathbb{F}_q^{(k_n+1)n}\,:\,\supp(v)\text{ is a non empty interval }\{a,a+1,\dots,b\}\,\}.\]

The following lemma can be proved the same way as Lemma~\ref{Lemma44}.

\begin{lemma}
 If $\ker \widehat{C}_{2k}$ contains a nonzero vector $v$, then it is also contains a vector $v'\in \mathcal{C}_n$ such that $\supp(v')\subseteq \supp(v)$. 
\end{lemma}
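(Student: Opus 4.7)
The plan is to transcribe the argument of Lemma~\ref{Lemma44} almost verbatim, once the block structure of $\widehat{C}_{2k}$ is made explicit. Writing $v=(v_1,\dots,v_{k+1})$ with $v_j\in \mathbb{F}_q^n$, the output $\widehat{C}_{2k}v\in \mathbb{F}_q^{(k+1)n}$ splits (as suggested by the schematic figure) into three types of pieces: a top block of $n/2$ rows depending only on $v_1$ (via the bottom half of $A_{1,1}$), for $i=1,\dots,k$ a middle block of $n$ rows equal to $A_{i+1,i}v_i+A_{i+1,i+1}v_{i+1}$, and a bottom block of $n/2$ rows depending only on $v_{k+1}$ (via the top half of $A_{k+2,k+1}$). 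The crucial feature is that the only couplings between distinct $v_j$'s are between consecutive indices, exactly as for $C_{2k}$.

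Given a nonzero $v\in\ker\widehat{C}_{2k}$, I would set $a=\min\supp(v)$. If $\supp(v)$ is already an interval, take $v'=v$. Otherwise, let $c=\min\{i>a:i\notin\supp(v)\}$ and define $v'$ by $v'_i=v_i$ for $i<c$ and $v'_i=0$ for $i\ge c$. Then $\supp(v')=\{a,a+1,\dots,c-1\}$ is a non-empty interval contained in $\supp(v)$, so $v'\in \mathcal{C}_n$ with $\supp(v')\subseteq\supp(v)$.

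It remains to verify $\widehat{C}_{2k}v'=0$ block-by-block, using that $v_c=0$. The top block vanishes because either $a>1$ (so $v'_1=0$) or $v'_1=v_1$ and the top block of $\widehat{C}_{2k}v$ was already zero. For a middle block indexed by $i$ with $i+1<c$ we have $(v'_i,v'_{i+1})=(v_i,v_{i+1})$ so the output matches that of $\widehat{C}_{2k}v=0$; for $i+1=c$ the output is $A_{c,c-1}v_{c-1}+A_{c,c}\cdot 0$, which coincides with the corresponding block of $\widehat{C}_{2k}v$ since $v_c=0$ there; for $i\ge c$ both $v'_i$ and $v'_{i+1}$ vanish. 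The bottom block is analogous: either $k+1<c$ (same as for $v$) or $k+1\ge c$ (so $v'_{k+1}=0$). Hence $v'\in\ker\widehat{C}_{2k}$.

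There is no real obstacle here; the only substantive point is to make the block decomposition of $\widehat{C}_{2k}$ explicit so that the bidiagonal coupling pattern is visibly preserved. With that in hand, the truncation from the proof of Lemma~\ref{Lemma44} transfers unchanged.
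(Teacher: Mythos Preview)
Your proof is correct and follows exactly the approach the paper indicates: the paper simply states that this lemma ``can be proved the same way as Lemma~\ref{Lemma44},'' and you have carried out precisely that argument, spelling out the block decomposition of $\widehat{C}_{2k}$ and verifying the truncation block-by-block. The only addition over the paper is your explicit handling of the top and bottom half-blocks arising from the truncation, which is the right thing to check and causes no difficulty.
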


As a consequence of this lemma, we see that if there is a $v\in \ker \widehat{C}_{2k_n}$ such that $\supp(v)\neq \{1,2,\dots,k_n+1\}$, then there is a $w\in \ker \widehat{C}_{2k_n}$ such that $\supp(w)=\{a,a+1,\dots,b\}$ where $a\neq 1$ or $b\neq k_n+1$.

Thus, it is enough to prove that
\[\lim_{n\to\infty} \mathbb{E}|\{w\in \ker \widehat{C}_{2k_n}\,:\,\supp(w)=\{a,\dots,b\}\text{ where $a\neq 1$ or $b\neq k_n+1$}\}|=0.\]

Observe that 
\begin{align}\label{deloceq1}
\mathbb{E}&|\{w\in \ker \widehat{C}_{2k_n}\,:\,\supp(w)=\{a,\dots,b\}\text{ where $a\neq 1$ or $b\neq k_n+1$}\}|
\\&=\sum_{b=1}^{k_n} \sum_{\substack{w\in \mathbb{F}_q^{(k_n+1)n}\\\supp(w)=\{1,\dots,b\}}} \mathbb{P}(w\in \ker\widehat{C}_{2k_n})\nonumber\\&\quad+\sum_{a=2}^{k_n+1} \sum_{\substack{w\in \mathbb{F}_q^{(k_n+1)n}\\\supp(w)=\{a,\dots,k_n+1\}}} \mathbb{P}(w\in \ker\widehat{C}_{2k_n})\nonumber\\&\quad+\sum_{2\le a\le b\le k_n} \sum_{\substack{w\in \mathbb{F}_q^{(k_n+1)n}\nonumber\\\supp(w)=\{a,\dots,b\}}} \mathbb{P}(w\in \ker\widehat{C}_{2k_n}).\nonumber
\end{align}

Note that
\[|\{w\in \mathbb{F}_q^{(k_n+1)n} \,:\,\supp(w)=\{a,\dots,b\}\}|=(q^n-1)^{b-a+1}\le q^{(b-a+1)n}.\]
It is straightforward to see that for any vector $w$ such that $\supp(w)=\{1,\dots,b\}$ where $b\le k_n$, we have
\[\mathbb{P}(w\in \ker \widehat{C}_{2k_n} )=q^{-bn-m}.\]
Thus, for any $b\le k_n$, we have
\[\sum_{\substack{w\in \mathbb{F}_q^{(k_n+1)n}\\\supp(w)=\{1,\dots,b\}}} \mathbb{P}(w\in \ker\widehat{C}_{2k_n})\le q^{bn} q^{-bn-m}=q^{-m}. \]

Therefore,
\begin{equation}\label{deloceq2}\sum_{b=1}^{k_n}\sum_{\substack{w\in \mathbb{F}_q^{(k_n+1)n}\\\supp(w)=\{1,\dots,b\}}} \mathbb{P}(w\in \ker\widehat{C}_{2k_n})\le q^{-m} k_n. \end{equation}
A similar argument gives that
\begin{equation}\label{deloceq3}\sum_{a=2}^{k_n+1} \sum_{\substack{w\in \mathbb{F}_q^{(k_n+1)n}\\\supp(w)=\{a,\dots,k_n+1\}}} \mathbb{P}(w\in \ker\widehat{C}_{2k_n})\le q^{-m}k_n.\end{equation}

It is straightforward to see that for any vector $w$ such that $\supp(w)=\{a,\dots,b\}$ where $2\le a\le b\le k_n$, we have
\[\mathbb{P}(w\in \widehat{C}_{2k_n} )=q^{-(b-a+1)n-2m}.\]

Thus, for any $2\le a\le b\le k_n$, we have
\[\sum_{\substack{w\in \mathbb{F}_q^{(k_n+1)n}\\\supp(w)=\{a,\dots,b\}}} \mathbb{P}(w\in \ker\widehat{C}_{2k_n})\le q^{(b-a+1)n} q^{-(b-a+1)n-2m}=q^{-2m}. \]

Therefore,
\begin{equation}\label{deloceq4}\sum_{2\le a\le b\le k_n} \sum_{\substack{w\in \mathbb{F}_q^{(k_n+1)n}\\\supp(w)=\{a,\dots,b\}}} \mathbb{P}(w\in \ker\widehat{C}_{2k_n})\le q^{-2m}k_n^2.\end{equation}

Combining \eqref{deloceq1}, \eqref{deloceq2}, \eqref{deloceq3} and \eqref{deloceq4}, we see that
\[\mathbb{E}|\{w\in \ker \widehat{C}_{2k_n}\,:\,\supp(w)=\{a,\dots,b\}\text{ where $a\neq 1$ or $b\neq k_n+1$}\}\le 2q^{-m}k_n+\left(q^{-m}k_n\right)^2.\]
Part~\eqref{partdeloc} of Theorem~\ref{thmlocdeloc} follows from the assumption that $\lim_{n\to\infty} q^{-m}k_n=0$.



\subsection{The localized phase -- The proof of part~\eqref{partloc} of Theorem~\ref{thmlocdeloc}}

\begin{lemma}\label{lemmavandown}
There is a $C>0$ depending only on $q$ such that for every $0<\varepsilon\le 1$, we have
\[\mathbb{P}(D_{-\infty,\varepsilon}>0)\ge C\varepsilon^2.\]
\end{lemma}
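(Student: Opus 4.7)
My plan is to lower bound $\mathbb{P}(D_{-\infty,\varepsilon}>0)$ by comparing the walk from $-\infty$ to the walk starting at a carefully chosen intermediate state $a^\star<0$ and applying the strong Markov property. Set $k^\star=\lceil-\log_q\varepsilon\rceil$ and $a^\star=-k^\star-C_0$, where the positive integer $C_0=C_0(q)$ is fixed below. Since $\varepsilon\le 1$ one has $q^{-k^\star}\in[q^{-1}\varepsilon,\varepsilon]$, so $q^{a^\star}\in[q^{-1-C_0}\varepsilon,q^{-C_0}\varepsilon]$ and $q^{-a^\star}\ge q^{C_0}/\varepsilon$; thus both $q^{a^\star}$ and $1/q^{-a^\star}$ are comparable to $\varepsilon$ up to constants depending only on $q$.

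The first step controls the hitting time of $a^\star$ from below. Writing $u_j$ for the mean time the walk started at $-j$ takes to first reach $-j+1$, a one-step analysis for the birth--death chain gives the recursion $u_j=q^{-j}+q^{-2j}u_{j+1}$. Substituting $v_j=q^ju_j$ turns this into $v_j=1+q^{-2j-1}v_{j+1}$, whose iteration yields $v_j=\sum_{k\ge 0}q^{-2kj-k^2}\le K(q)$ uniformly in $j\ge 1$, and hence $u_j\le K(q)q^{-j}$. Thus, for every integer $a\le a^\star$, writing $\tau_a$ for the first time $Z_{a,\cdot}$ hits $a^\star$,
\[
\mathbb{E}[\tau_a]\le\sum_{j>k^\star+C_0}K(q)q^{-j}\le\frac{K(q)q^{-C_0}}{q-1}\,\varepsilon.
\]
I choose $C_0$ large enough that this prefactor is at most $1/4$; Markov's inequality then gives $\mathbb{P}(\tau_a\le\varepsilon/2)\ge 1/2$ uniformly in $a\le a^\star$.

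The second step lower bounds the probability that $Z_{a^\star,\cdot}$ makes a downward jump by time $\varepsilon/2$. The first jump of $Z_{a^\star,\cdot}$ happens at an exponential time of rate $\lambda=q^{a^\star}+q^{-a^\star}$ and is downward with probability $p=q^{a^\star}/\lambda$, independently of the time. Our scaling yields $\lambda\varepsilon/2\ge q^{C_0}/2$, so $1-e^{-\lambda\varepsilon/2}\ge 1-e^{-q^{C_0}/2}$, and $p\ge q^{2a^\star}/2\ge q^{-2-2C_0}\varepsilon^2/2$ (using $q^{2a^\star}\le 1$). Hence
\[
\mathbb{P}(D_{a^\star,\varepsilon/2}>0)\ge p\bigl(1-e^{-\lambda\varepsilon/2}\bigr)\ge c'\varepsilon^2,
\]
for some constant $c'=c'(q)>0$.

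Combining the two bounds by the strong Markov property at $\tau_a$ gives $\mathbb{P}(D_{a,\varepsilon}>0)\ge\tfrac{c'}{2}\varepsilon^2$ for every integer $a\le a^\star$. Lemma~\ref{Dinftyexists} tells us that $\mathbb{P}(D_{a,\varepsilon}>0)\to\mathbb{P}(D_{-\infty,\varepsilon}>0)$ as $a\to-\infty$, so passing to the limit concludes the proof with $C=c'/2$. The crux of the argument is picking the scale $a^\star\asymp\log_q\varepsilon$: any state much closer to $0$ would not be reachable from $-\infty$ by time $\varepsilon$ with constant probability, while any state much more negative would make the downward jump rate $q^{a^\star}$ too small to give an $\varepsilon^2$ lower bound. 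Every other step is a routine application of Markov's inequality, the strong Markov property, or the total variation convergence of Lemma~\ref{Dinftyexists}.
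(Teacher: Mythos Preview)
Your proof is correct and follows essentially the same idea as the paper's: choose an intermediate level $a^\star\approx\log_q\varepsilon$, show the walk reaches it in time $\le\varepsilon/2$ with constant probability, and then pay the $\sim\varepsilon^2$ price for one downward jump from that level. The only difference is technical---where the paper computes the probability of the specific monotone upward trajectory from $a$ to the intermediate level and conditions on it, you bound the mean hitting time via the one-step birth--death recursion and invoke the strong Markov property, which is equally valid and arguably a bit cleaner.
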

\begin{proof}
 By the definition of $D_{-\infty,\varepsilon}$ given in Lemma~\ref{Dinftyexists}, it is enough to prove that there is a $C>0$ such that for all $0<\varepsilon\le 1$ assuming that $a$ is small enough, we have
 \[\mathbb{P}(D_{a,\varepsilon}>0)\ge C\varepsilon^2.\]

 Choose $K=\lfloor\log_q \varepsilon\rfloor-2$. Since $\varepsilon\le 1$, we see that $K<0$. Assume that $a<K$. Consider the event that
 \begin{enumerate}[(a)]
  \item \label{eeventa}$\widehat{Z}_i=a+i$ for all $0\le i\le K-a$;
  \item \label{eeventb}$\widehat{Z}_{K-a+1}=K-1$;
  \item \label{eeventc}$T_{K-a+1}\le \varepsilon$.
 \end{enumerate}
 On this event, $D_{a,\varepsilon}>0$. Thus, it is enough to provide a lower bound on the probability of this event.

 The probability of that both \eqref{eeventa} and \eqref{eeventb} occur is
 \[\frac{q^{K}}{q^K+q^{-K}}\prod_{i=a}^{K-1} \frac{q^{-i}}{q^i+q^{-i}}\ge \frac{q^{2K}}2 \prod_{i=-\infty}^0 \frac{q^{-i}}{q^i+q^{-i}}>2C \varepsilon^2,\]
 for some $C>0$ depending only on $q$.

 Conditioned on the events \eqref{eeventa} and \eqref{eeventb}, we have
 \[\mathbb{E}T_{K-a+1}=\sum_{i=a}^{K} \frac{1}{q^{i}+q^{-i}}\le \sum_{i=-\infty}^K q^{i}\le 2q^K\le \frac{2}{q^2}\varepsilon\le \frac{\varepsilon}2.\]

 Thus, by Markov's inequality, we have that conditioned on the events \eqref{eeventa} and \eqref{eeventb}, $\mathbb{E}T_{K-a+1}\le \varepsilon$ with probability at least $\frac{1}2$. Thus, with probability $C\varepsilon^2$, the events \eqref{eeventa}, \eqref{eeventb} and \eqref{eeventc} all occur.
\end{proof}

\begin{corr}\label{cor48}
 Let $C$ be the constant provided by Lemma~\ref{lemmavandown}. Let $0<\varepsilon\le 1$. Assume that $\limsup_{n\to\infty}\frac{q^{-m}}{q-1} k_n\ge \varepsilon$. Then,
 \[\limsup_{n\to\infty}\mathbb{P}(\dim \ker C_{2k_n}>0)\ge C\varepsilon^2.\]
\end{corr}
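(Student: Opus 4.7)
My approach is to reduce the statement to Theorem~\ref{thmdimkerC2k}(2) via a monotonicity argument, and then to apply Lemma~\ref{lemmavandown} to the limiting random variable $D_{-\infty,\varepsilon}$.

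The key observation is that by Lemma~\ref{dimkervsD} together with the explicit formula $\dim\ker C_{2k}=\sum_{i=1}^{k}(n-X_{2i-1}-X_{2i})$, the quantity $\dim\ker C^{(n)}_{2k}$ is realized as a partial sum of non-negative integers along a single Markov chain $(X_i)$ (non-negativity follows from the fact that $P_n(d,r)$ vanishes unless $d+r\le n$). Hence, in the natural coupling coming from the infinite matrix $A$, $\dim\ker C^{(n)}_{2k}$ is non-decreasing in $k$ almost surely, and in particular $\mathbb{P}(\dim\ker C^{(n)}_{2k}>0)$ is non-decreasing in $k$.

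Using this, I would fix any $\varepsilon''\in(0,\varepsilon)$ and set $k'_n=\lfloor \varepsilon''(q-1)q^{n/2}\rfloor$, so that $\tfrac{q^{-n/2}}{q-1}k'_n\to \varepsilon''$. By the hypothesis $\limsup_n t_n\ge \varepsilon$, there is a subsequence $n_i$ with $t_{n_i}\to t^*\in[\varepsilon,\infty]$, and for all large enough $i$ we have $k'_{n_i}\le k_{n_i}$. The monotonicity above then yields
\[\mathbb{P}(\dim\ker C^{(n_i)}_{2k_{n_i}}>0)\ge \mathbb{P}(\dim\ker C^{(n_i)}_{2k'_{n_i}}>0).\]
By Theorem~\ref{thmdimkerC2k}(2) applied to the auxiliary sequence $(k'_n)$, the right-hand side converges to $\mathbb{P}(D_{-\infty,\varepsilon''}>0)$, which is at least $C(\varepsilon'')^2$ by Lemma~\ref{lemmavandown} (noting $\varepsilon''<1$). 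Letting $\varepsilon''\uparrow\varepsilon$ then gives $\limsup_n \mathbb{P}(\dim\ker C^{(n)}_{2k_n}>0)\ge C\varepsilon^2$, as desired.

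There is no genuinely hard step here: all the substantive content is already packaged in the preceding lemmas, and the argument is essentially a limiting/monotonicity bookkeeping exercise. The only point requiring any care is the order of quantifiers, namely the need to introduce the auxiliary sequence $k'_n$ (whose rescaled time always lies in $(0,\infty)$, so that Theorem~\ref{thmdimkerC2k}(2) applies cleanly) rather than attempting to work with $k_n$ directly, where the limit $t^*$ could in principle equal $+\infty$ and Theorem~\ref{thmdimkerC2k}(2) would be inapplicable.
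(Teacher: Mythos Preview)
Your proof is correct and follows essentially the same approach as the paper: both use the monotonicity of $\mathbb{P}(\dim\ker C^{(n)}_{2k}>0)$ in $k$ (coming from $\dim\ker C_{2k}=\sum_{i=1}^k (n-X_{2i-1}-X_{2i})$ with non-negative summands), reduce to an auxiliary sequence with a finite positive limit of $t_n$, apply Theorem~\ref{thmdimkerC2k}(2), and invoke Lemma~\ref{lemmavandown}. Your version is in fact a bit more carefully written, since you explicitly introduce $k'_n$ with $t'_n\to\varepsilon''\in(0,\varepsilon)$ and take $\varepsilon''\uparrow\varepsilon$, whereas the paper treats the case $\limsup t_n=\varepsilon$ first (implicitly passing to a subsequence) and then reduces the general case to it via monotonicity.
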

\begin{proof}
 First, assume that $\limsup_{n\to\infty}\frac{q^{-m}}{q-1} k_n= \varepsilon$. Then combining Theorem~\ref{thmdimkerC2k} and Lemma~\ref{lemmavandown}, it follows that
 \[\limsup_{n\to\infty}\mathbb{P}(\dim \ker C_{2k_n}>0)\ge C\varepsilon^2.\]
 The statement follows from the observation that if $k_n'\le k_n$, then
 \[\mathbb{P}(\dim \ker C^{(n)}_{2k_n'}>0)\le \mathbb{P}(\dim \ker C^{(n)}_{2k_n}>0).\qedhere\]
\end{proof}

Note that the events that $\ker\widehat{C}_{2k_n}\cap V_i\neq \emptyset$ ($i=1,2,\dots,L$) are independent, and
\[\mathbb{P}(\ker\widehat{C}_{2k_n}\cap V_i\neq \emptyset)\ge \mathbb{P}(\dim \ker C^{(n)}_{2\bar{k}_n}>0) \]
for all $i=1,2,\dots,L$. Thus, part~\eqref{partloc} of Theorem~\ref{thmlocdeloc} follows from Corollary~\ref{cor48}.

\section{The proof of Theorem~\ref{thmdimkerC2k-1}}
\subsection{The connection between block bidiagonal matrices and matrix products -- The proof of part~\eqref{thmdimkerC2k-1part1} of Theorem~\ref{thmdimkerC2k-1}}
Let $A'=\left({A}_{i,j}'\right)_{i,j=1}^\infty$ be a random infinite block matrix over the finite field $\mathbb{F}_q$ such that all the blocks are $n\times n$ matrices, $A'$ is a block lower bidiagonal matrix, that is, $A_{i,j}'=0$ whenever $j \notin \{i,i-1\}$. The blocks $A_{1,1}', A_{2,2}',A_{3,3}',\dots$ are chosen independently uniformly at random from the set of all $n\times n$ matrices over $\mathbb{F}_q$. Moreover, $A_{i,i-1}'=I$ for all $i>1$.

For $k\ge 1$, let $C_{2k-1}'$ be the submatrix of $A'$ determined by the first $kn$ rows and $kn$ columns. Furthermore, let $C_{2k}'$ be the submatrix of $A'$ determined by the first $(k+1)n$ rows and $kn$ columns. 

As it was proved in \cite[Equation (1)]{meszaros2024universal}, we have\footnote{Equation (1) in \cite{meszaros2024universal} is about the cokernel of matrices over $\mathbb{Z}$, but by the same proof one can also obtain the equality \eqref{productasbidiagonal}.}
\begin{equation}\label{productasbidiagonal}\dim\ker C_{2k-1}'=\dim\ker A_{1,1}'A_{2,2}'\cdots A_{k,k}'.
\end{equation}

Let $X_0'=0$ and for $i>0$, let
\[X_i'=\rang(C_i')-\rang(C_{i-1}'),\]
where $\rang(C_0')$ is defined to be $0$.


It turns out that $X_0',X_1',X_2',\dots$ evolves in a manner very similar to the evolution of $X_0,X_1,\dots$. Indeed, relying on the method of the proof of Lemma~\ref{lemmaMarkov} (in particular Remark~\ref{remarkI}), one can see that $X_0',X_1',X_2',\dots$ is a time inhomogeneous Markov chain, where at every second step we have the same transition probabilities as the Markov chain $X_0,X_1,X_2,\dots$, that is
\[\mathbb{P}(X_{2i+1}'=r\,|\,X_{2i}'=d)=\mathbb{P}(X_{2i+1}=r\,|\,X_{2i}=d).\]
However, the transition from $X_{2i-1}'$ to $X_{2i}'$ is completely deterministic, namely, 
\[X_{2i}'=n-X_{2i-1}'.\]

Note that
\begin{equation}\label{dimker'}
\dim\ker C_{2k-1}'=kn-\sum_{i=1}^{k-1}(X_{2i-1}'+X_{2i}')-X_{2k-1}'=n-X_{2k-1}'. 
\end{equation}

Moreover, if $X_i$ is defined as in \eqref{rankincrementdef}, then
\begin{align}\dim\ker C_{2k-2}&=\sum_{i=1}^{k-1}(n-X_{2i-1}-X_{2i})\label{dimkerC2k-2}\\
\dim\ker C_{2k-1}&=\dim\ker C_{2k-2}+n-X_{2k-1}\label{dimkerC2k-1eq}.
\end{align}

\begin{lemma}\label{almostdeter}
 Assume that $\lim_{n\to\infty}\frac{q^{-n/2}}{q-1}k_n=0$. Then, as $n\to\infty$ with probability tending to one, we have
 \[X_{2i}=n-X_{2i-1}\text{ for all }1\le i\le k_n-1\text{ and }\dim\ker C_{2k_n-1}=n-X_{2k_n-1}.\]
\end{lemma}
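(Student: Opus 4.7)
The plan is to observe that Lemma~\ref{almostdeter} is an almost immediate corollary of part~\eqref{thmdimkerC2kpart1} of Theorem~\ref{thmdimkerC2k}, applied with $k_n-1$ in place of $k_n$. The key bridge is the purely algebraic fact, already recorded in \eqref{dimkerC2k-2} and \eqref{dimkerC2k-1eq}, expressing $\dim\ker C_{2k_n-2}$ as a nonnegative telescoping sum of the quantities $n - X_{2i-1} - X_{2i}$.

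First I would rewrite the event of interest in terms of a kernel dimension. By \eqref{dimkerC2k-2},
\[
\dim\ker C_{2(k_n-1)} \;=\; \sum_{i=1}^{k_n-1}\bigl(n - X_{2i-1} - X_{2i}\bigr).
\]
The transition matrix $P_n$ of Lemma~\ref{lemmaMarkov} is supported on pairs $(d,r)$ with $d+r\le n$, so consecutive values of the chain $(X_j)$ automatically satisfy $X_{2i-1}+X_{2i}\le n$. Hence each summand above is a nonnegative integer, and the event $\{\dim\ker C_{2(k_n-1)} = 0\}$ coincides with the event
\[
\bigl\{X_{2i} = n - X_{2i-1} \text{ for all } 1 \le i \le k_n-1\bigr\}.
\]
Moreover, on this same event, equation \eqref{dimkerC2k-1eq} immediately gives
$\dim\ker C_{2k_n-1} = \dim\ker C_{2(k_n-1)} + n - X_{2k_n-1} = n - X_{2k_n-1}$.

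The second step is to apply part~\eqref{thmdimkerC2kpart1} of Theorem~\ref{thmdimkerC2k} with the sequence $k_n - 1$, which satisfies the very same hypothesis $\frac{q^{-n/2}}{q-1}(k_n-1)\to 0$. This yields that the integer-valued quantity $\dim\ker C_{2(k_n-1)}^{(n)}$ converges to $0$ in probability, i.e.\ $\mathbb{P}\bigl(\dim\ker C_{2(k_n-1)}^{(n)} = 0\bigr) \to 1$. Combined with the equivalence in the previous paragraph, this is exactly the conclusion of the lemma.

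There is essentially no obstacle to overcome here — the entire proof is bookkeeping on top of the already-proven part~\eqref{thmdimkerC2kpart1} of Theorem~\ref{thmdimkerC2k}, the crucial point being the sign property $n - X_{2i-1} - X_{2i} \ge 0$ forced by the support of $P_n$, which allows us to read the desired pointwise identities off from the vanishing of a single nonnegative sum.
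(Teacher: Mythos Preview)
Your proof is correct and follows essentially the same approach as the paper: both apply part~\eqref{thmdimkerC2kpart1} of Theorem~\ref{thmdimkerC2k} to conclude $\mathbb{P}(\dim\ker C_{2k_n-2}=0)\to 1$, then use the nonnegativity of $n-X_{2i-1}-X_{2i}$ together with \eqref{dimkerC2k-2} and \eqref{dimkerC2k-1eq} to deduce the claimed identities. Your write-up is slightly more explicit about why each summand is nonnegative (via the support of $P_n$), but the argument is the same.
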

\begin{proof}
 From part~\eqref{thmdimkerC2kpart1} of Theorem~\ref{thmdimkerC2k}, it follows that
 \[\lim_{n\to\infty}\mathbb{P}(\dim\ker C_{2k_n-2}^{(n)}=0)=1.\]
 
 Note that $n-X_{2i-1}-X_{2i}\ge 0$ for all $i$. Thus, by \eqref{dimkerC2k-2} and \eqref{dimkerC2k-1eq}, $\dim\ker C_{2k_n-2}=0$ implies that $X_{2i}=n-X_{2i-1}$ for all $1\le i\le k_n-1$ and $\dim\ker C_{2k_n-1}=n-X_{2k_n-1}$. 
\end{proof}

Part~\eqref{thmdimkerC2k-1part1} of Theorem~\ref{thmdimkerC2k-1} follows by combining \eqref{productasbidiagonal}, \eqref{dimker'}, Lemma~\ref{almostdeter} and the following lemma.

\begin{lemma}
Assume that $\lim_{n\to\infty}\frac{q^{-n/2}}{q-1}k_n=0$. Then there is a coupling of $(X_0,X_1,\dots,X_{2k_n-1})$ and $(X_0',X_1',\dots,X_{2k_n-1}')$ such that with probability tending to $1$, we have $(X_0,X_1,\dots,X_{2k_n-1})=(X_0',X_1',\dots,X_{2k_n-1}')$.
\end{lemma}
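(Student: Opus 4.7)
The plan is to exploit the structural similarity between the two Markov chains: both have the identical even-to-odd transition kernel $P_n$, and they differ only in the odd-to-even step, which is random with kernel $P_n$ for $(X_i)$ but deterministic, $X'_{2i}=n-X'_{2i-1}$, for $(X'_i)$. Consequently one can build a step-by-step maximal coupling that keeps the two chains equal as long as the random value $X_{2i}$ produced by the first chain happens to equal $n-X_{2i-1}$.

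Start the construction with $X_0=X'_0=0$, and suppose inductively that $X_j=X'_j$ for all $j\le 2i-2$. At the even-to-odd step I would draw $X_{2i-1}$ from $P_n(X_{2i-2},\cdot)$ and put $X'_{2i-1}:=X_{2i-1}$; this gives $X'_{2i-1}$ the correct marginal because its transition kernel is the same $P_n$. At the odd-to-even step I would set $X'_{2i}:=n-X'_{2i-1}=n-X_{2i-1}$, and draw $X_{2i}$ from $P_n(X_{2i-1},\cdot)$ using the maximal coupling with the Dirac mass at $n-X_{2i-1}$: with probability $P_n(X_{2i-1},n-X_{2i-1})$ put $X_{2i}:=n-X_{2i-1}$, and otherwise draw $X_{2i}$ from $P_n(X_{2i-1},\cdot)$ conditioned to be different from $n-X_{2i-1}$. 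Once the first disagreement occurs, the two processes are allowed to evolve independently from their respective kernels, which ensures that each retains its correct joint law.

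By construction the two trajectories coincide on $\{0,1,\dots,2k_n-1\}$ exactly on the event
\[E_n=\{X_{2i}=n-X_{2i-1}\ \text{for all}\ 1\le i\le k_n-1\}.\]
But Lemma~\ref{almostdeter}, which is itself an immediate consequence of part~\eqref{thmdimkerC2kpart1} of Theorem~\ref{thmdimkerC2k} applied at $k_n-1$ combined with \eqref{dimkerC2k-2} and the non-negativity of each summand $n-X_{2i-1}-X_{2i}$, already shows that $\mathbb{P}(E_n)\to 1$ under the hypothesis $\frac{q^{-n/2}}{q-1}k_n\to 0$. This is exactly the desired conclusion.

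The main (but modest) thing to check is that the inductive construction really gives the correct marginal law for $(X'_i)$; this is automatic from the coincidence of the even-to-odd kernels and the fact that $X'_{2i}$ is a deterministic function of $X'_{2i-1}$. All the substantive probabilistic work has already been done in Lemma~\ref{almostdeter}, so the proof is essentially a translation of that lemma into the language of couplings.
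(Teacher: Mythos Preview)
Your proposal is correct and follows essentially the same approach as the paper: both build the coupling by letting $X'$ track $X$ as long as the event $X_{2i}=n-X_{2i-1}$ holds (the paper phrases this as defining $X'$ from the already-generated $X$, you phrase it as a step-by-step maximal coupling, but these are the same construction), and both invoke Lemma~\ref{almostdeter} to show that the decoupling event has vanishing probability.
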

\begin{proof}
On the event that $X_{2i}=n-X_{2i-1}$ for all $1\le i\le k_n-1$, we set $X_i'=X_i$ for all $0\le i\le 2k_n-1$. Otherwise, let $\tau$ be the smallest $i$ such that $X_{2i}\neq n-X_{2i-1}$. Then we set $X_i'=X_i$ for all $0\le i\le 2\tau-1$. And for $i\ge 2\tau$, we let $X_i'$ evolve according to the transition probabilities given for $X'$. It is straightforward to see that this is a coupling of $(X_0,X_1,\dots,X_{2k_n-1})$ and $(X_0',X_1',\dots,X_{2k_n-1}')$. Moreover, by Lemma~\ref{almostdeter}, we have
\[\lim_{n\to\infty}\mathbb{P}((X_0,X_1,\dots,X_{2k_n-1})=(X_0',X_1',\dots,X_{2k_n-1}'))=1.\qedhere\]
\end{proof}






\subsection{The rest of the proof of Theorem~\ref{thmdimkerC2k-1}}

\begin{lemma}\label{lemmacrossing}
Let $b<a$. Then
\begin{align*}\mathbb{P}(X_{2i+2}\ge a \,|\,X_{2i}=b)&=O(q^{-a}),\text{ and }\\
\mathbb{P}(X_{2i+2}\le b \,|\,X_{2i}=a)&=O(q^{-(n-b)}).
\end{align*}
\end{lemma}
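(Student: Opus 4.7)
The proof should be a direct two-step transition calculation using the bound $P_n(d,r)=\mathbbm{1}(d+r\le n)O(q^{-(n-d-r)(n-r)})$ established (implicitly) in the proof of Lemma~\ref{Lemma34}. Both inequalities reduce to elementary $q$-geometric sums where one identifies the dominant exponent.

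For the first inequality, the key observation is that the Markov chain satisfies $X_{2i+1}+X_{2i+2}\le n$ almost surely, since $P_n(d,r)$ is supported on $d+r\le n$. Therefore
\[
\mathbb{P}(X_{2i+2}\ge a\mid X_{2i}=b)\le \mathbb{P}(X_{2i+1}\le n-a\mid X_{2i}=b)=\sum_{s=0}^{n-a}P_n(b,s).
\]
Plugging in $P_n(b,s)=O(q^{-(n-b-s)(n-s)})$ and substituting $u=n-b-s$ (so the sum runs over $u\in[a-b,n-b]$ with summand $q^{-u(b+u)}$), the dominant contribution comes from $u=a-b$, giving $O(q^{-(a-b)a})=O(q^{-a})$, since $a-b\ge 1$.

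For the second inequality, I would expand the two-step transition directly:
\[
\mathbb{P}(X_{2i+2}\le b\mid X_{2i}=a)=\sum_{s=0}^{n-a}P_n(a,s)\sum_{r=0}^{\min(b,n-s)}P_n(s,r).
\]
The support condition $a+s\le n$ forces $s\le n-a<n-b$, so the inner sum is $\sum_{r=0}^{b}P_n(s,r)$. Using $P_n(s,r)=O(q^{-(n-s-r)(n-r)})$ and the substitution $r'=n-s-r$, the inner sum becomes $\sum_{r'=n-s-b}^{n-s}q^{-r'(s+r')}$, dominated at $r'=n-s-b$, giving $O(q^{-(n-s-b)(n-b)})$. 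Multiplying by $P_n(a,s)=O(q^{-(n-a-s)(n-s)})$ and substituting $t=n-s\in[a,n]$, the total exponent is $(t-a)t+(t-b)(n-b)$, which is increasing in $t$ on $[a,n]$ and minimized at $t=a$ with value $(a-b)(n-b)$. The resulting geometric-type series is dominated by its first term, yielding $O(q^{-(a-b)(n-b)})=O(q^{-(n-b)})$ since $a-b\ge 1$.

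There is no real obstacle here; the entire statement is a routine consequence of the exponentially small probability that a random $(n-d)\times n$ matrix fails to be close to full rank. The only small thing to watch is to confirm that the support constraint $s\le n-a$ in the second part rules out any contribution from $s\ge n-b$ (where the inner sum would equal $1$), and that in both parts the dominant term in the geometric sum is indeed the first one (which is immediate from the convexity of the exponents in the variables $u$, respectively $t$).
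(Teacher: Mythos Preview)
Your proof is correct and follows essentially the same approach as the paper. Your treatment of the first inequality is identical to the paper's (reduce via the support constraint $X_{2i+1}+X_{2i+2}\le n$ to a one-step tail sum, then identify the dominant term), and for the second inequality the paper only writes ``the second follows similarly''; your direct two-step expansion is a correct and natural way to carry out that ``similarly,'' and your bookkeeping of the dominant exponents is accurate.
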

\begin{proof}
Note that $n-X_{2i+1}\ge X_{2i+2}$. Thus, the event that $X_{2i+2}\ge a$ is contained in the event that $n-X_{2i+1}\ge a$. Thus,
\begin{multline*}\mathbb{P}(X_{2i+2}\ge a \,|\,X_{2i}=b)\le \mathbb{P}(X_{2i+2}\le n-a \,|\,X_{2i}=b)\\=\sum_{r=0}^{n-a} P_n(b,r)=\sum_{r=0}^{n-a} O(q^{-(n-b-r)(n-r)})=O(q^{-(a-b)a})=O(q^{-a}).\end{multline*}
This gives the first statement, the second follows similarly.
\end{proof}

\begin{lemma}\label{tight}
For any $\varepsilon>0$, there are constants $0<t,K<\infty$ such that for all large enough $n$, any initial value of $X_0=X_{0}^{(n)}$, if $j$ is such that $j\ge (q-1)q^m t$, then
\[\mathbb{P}(|X_{2j}-m|\ge K)\le \varepsilon.\]

\end{lemma}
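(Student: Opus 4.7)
The plan is to prove the bound at a single well-chosen time $j^* := \lfloor(q-1)q^m t\rfloor$ \emph{uniformly in the initial state of $X$}, and then extend it to all $j \ge (q-1)q^m t$ via the Markov property. Indeed, any such $j$ satisfies $j \ge j^*$, so writing $j = j^* + j'$ with $j' \ge 0$ and conditioning on $X_{2j'}$ gives
\[\mathbb{P}\bigl(|X_{2j}-m|\ge K\bigr) \;\le\; \sup_{h\in\{0,\dots,n\}}\mathbb{P}\bigl(|X_{2j^*}-m|\ge K\,\bigm|\,X_0=h\bigr),\]
so it suffices to bound the right-hand side by $\varepsilon$.

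For the core uniform estimate at time $j^*$, I would combine three ingredients. First, Lemma~\ref{gettingcloseto0b} (with the case $|X_0-m|\le 2\alpha n$ handled trivially by $\tau_1=0$) produces, with probability $1-O(q^{-\alpha n})$, a stopping time $\tau_1\le q^{m-\alpha n}$ with $|X_{2\tau_1}-m|\le 2\alpha n$. Second, restarting at $\tau_1$ via the strong Markov property, Theorem~\ref{theoremcoupling} supplies a coupling of the shifted rescaled chain $Y$ with the continuous-time walk $Z$ started at $a=X_{2\tau_1}-m$; the time window on which the coupling is valid (up to $Z$-time $q^{\alpha n}$) comfortably includes $t^* := (j^*-\tau_1)/((q-1)q^m) = t+o(1)$, and is chosen so that $Y_{t^*}=X_{2j^*}-m$ exactly. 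Third, on the intersection of the coupling event with the event that $Z$ has no jump in $[t^*-q^{-\alpha n},\,t^*+q^{-\alpha n}]$, the matching of embedded chains together with the jump-time bound $|R_i-T_i|\le q^{-\alpha n}$ forces $Y_{t^*}=Z_{t^*}$, and hence $X_{2j^*}-m=Z_{t^*}$. By Lemma~\ref{Zcontinuity}, the no-jump event holds with probability at least $1-O(r(t/2)q^{-\alpha n})=1-O(q^{-\alpha n})$ for $n$ large.

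It remains to control $|Z_{t^*}|$. Since $Z_0=a$ with $|a|\le 2\alpha n$, we have $g(0)=\mathbb{E}\,q^{|Z_0|}\le q^{2\alpha n}<\infty$, so Lemma~\ref{rexists} gives $\mathbb{E}\,q^{|Z_{t^*}|}\le r(t^*)\le r(t/2)$ for all $n$ large enough, a constant depending only on $q$ and $t$. Markov's inequality then yields $\mathbb{P}(|Z_{t^*}|\ge K)\le r(t/2)\,q^{-K}$, which is at most $\varepsilon/2$ once $K$ is chosen large enough (depending on $q$, $t$, $\varepsilon$). A union bound over the $O(q^{-\alpha n})$-probability bad events from the previous paragraph then gives $\mathbb{P}(|X_{2j^*}-m|\ge K \mid X_0=h)\le\varepsilon$ uniformly in $h$ for $n$ large, completing the proof. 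The main (mild) subtlety is ensuring uniformity in the starting state: this is exactly why the Markov-property reduction and the preliminary step from Lemma~\ref{gettingcloseto0b} are both needed, since the coupling of Theorem~\ref{theoremcoupling} only applies when $|a|\le 2\alpha n$, while Lemma~\ref{rexists} conveniently produces a bound $r(t)$ that does not depend on the initial law of $Z_0$.
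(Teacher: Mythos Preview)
Your argument is correct, but it takes a substantially different and heavier route than the paper's. The paper avoids the coupling entirely: it fixes $t$ large enough that, by Lemma~\ref{Utail}, the chain hits the state $m$ \emph{exactly} at some time $\tau\in[j-(q-1)q^mt,\,j]$ with probability at least $1-\varepsilon/2$ (uniformly in the starting state). From that hitting time onward, the elementary crossing estimate of Lemma~\ref{lemmacrossing} gives $\mathbb{P}(|X_{2i+2}-m|\ge K\mid |X_{2i}-m|<K)=O(q^{-m-K})$, so a union bound over at most $(q-1)q^mt$ steps yields $\mathbb{P}(|X_{2j}-m|\ge K\mid\tau\le j)\le O(tq^{-K})\le\varepsilon/2$ for $K$ large.

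Your approach---reduce to a single time via the Markov property, use Lemma~\ref{gettingcloseto0b} to enter the coupling window $|X_{2\tau_1}-m|\le 2\alpha n$, invoke Theorem~\ref{theoremcoupling} to identify $X_{2j^*}-m$ with $Z_{t^*}$, and then bound $|Z_{t^*}|$ via Lemma~\ref{rexists} and Markov's inequality---is sound and has the pleasant feature that any fixed $t>0$ works (the constant $K$ then depends on $t$ through $r(t/2)$). The trade-off is that you are invoking the full coupling machinery (Theorem~\ref{theoremcoupling}, Lemma~\ref{Zcontinuity}, Lemma~\ref{rexists}) for a statement that the paper dispatches with two short estimates (Lemma~\ref{Utail} and Lemma~\ref{lemmacrossing}); in particular, Lemma~\ref{lemmacrossing} was introduced in the paper precisely for this purpose and gives a more self-contained argument.
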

\begin{proof}
Let $\tau=\min\{i\ge j-(q-1)q^m t\,:\,X_{2i}=m\}$.

By Lemma~\ref{Utail}, if $t$ is large enough, then for all large enough $n$, we have
\[\mathbb{P}(\tau\le j)\ge 1-\frac{\varepsilon}2.\]
By Lemma~\ref{lemmacrossing}
\[\mathbb{P}\left(|X_{2i+2}-m|\ge K\,\Big|\,|X_{2i}-m|<K\right)\le O(q^{-m-K}).\]

Thus,
\[
\mathbb{P}(|X_{2j}-m|< K \,|\,\tau\le j)\ge (1-O(q^{-m-K}))^{j-\tau}\ge 1-tq^m(q-1)O(q^{-m-K})=1-O(tq^{-K})\ge 1-\frac{\varepsilon}2\]
provided that $K$ is large enough.
\end{proof}

\begin{lemma}\label{tightanddimkerC2k1}
 Let $k_n$ be a sequence of positive integers, and let $t_n=\frac{q^{-m}}{q-1} k_n$. Assume that $\lim_{n\to\infty} t_n=t$, where $t>0$ (including the possibility of $t=\infty$). Then 
 \begin{enumerate}[(a)]
  \item Then the sequence of the random variables $X^{(n)}_{0,2k_n}-m$ is tight.
  \item \[\lim_{n\to\infty}\mathbb{P}(X^{(n)}_{0,2k_n+1}=n-X^{(n)}_{0,2k_n})=1.\]
  \item The total variation distance of $\dim\ker C_{2k_n+1}^{(n)}$ and $D'_{0,k_n,n}+(X_{0,2k_n}^{(n)}-m)+m$ converges to $0$.
 \end{enumerate}
\end{lemma}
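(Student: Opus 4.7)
\medskip

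\noindent\textbf{Proof plan.} The three claims are essentially consequences of what has already been proved, chained together by a simple arithmetic identity for $\dim\ker C_{2k_n+1}$.

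\smallskip

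For part (a), the plan is to invoke Lemma~\ref{tight} directly. Since $t_n\to t$ with $t>0$ (possibly $t=\infty$), for any fixed $t_0\in(0,t)$ we have $k_n\ge(q-1)q^m t_0$ for all large enough $n$. Given $\varepsilon>0$, Lemma~\ref{tight} then supplies a constant $K=K(\varepsilon)$ with $\mathbb{P}(|X^{(n)}_{0,2k_n}-m|\ge K)\le\varepsilon$ for all large $n$, which is tightness.

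\smallskip

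For part (b), I would condition on $X^{(n)}_{0,2k_n}=d$. The event $\{X^{(n)}_{0,2k_n+1}=n-d\}$ has conditional probability $P_n(d,n-d)$, and the product formula \eqref{appr2} gives $P_n(d,n-d)=\exp(-O(q^{-d}))=1-O(q^{-d})$. For $|d-m|\le K$ this is $1-O(q^{-m+K})$. Combining with part (a),
\[
\mathbb{P}\bigl(X^{(n)}_{0,2k_n+1}\neq n-X^{(n)}_{0,2k_n}\bigr)\le \mathbb{P}\bigl(|X^{(n)}_{0,2k_n}-m|>K\bigr)+O\bigl(q^{-m+K}\bigr),
\]
which can be made arbitrarily small by taking $K$ large and then $n$ large.

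\smallskip

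For part (c), the plan is to rewrite $\dim\ker C_{2k_n+1}^{(n)}$ by telescoping. Since $\rang C_{2k_n+1}=\sum_{i=1}^{2k_n+1}X_i$ and $C_{2k_n+1}$ is a $(k_n+1)n\times(k_n+1)n$ matrix,
\[
\dim\ker C^{(n)}_{2k_n+1}=(k_n+1)n-\sum_{i=1}^{2k_n+1}X_i=(n-X_{2k_n+1})+\sum_{i=1}^{k_n}(n-X_{2i-1}-X_{2i})=(n-X_{2k_n+1})+D'_{0,k_n,n}.
\]
On the event $\{X^{(n)}_{0,2k_n+1}=n-X^{(n)}_{0,2k_n}\}$ from part (b), we have $n-X_{2k_n+1}=X_{2k_n}=(X_{2k_n}-m)+m$, so $\dim\ker C^{(n)}_{2k_n+1}$ coincides identically with $D'_{0,k_n,n}+(X^{(n)}_{0,2k_n}-m)+m$. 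Since the complementary event has probability tending to $0$, the total variation distance of the two laws tends to $0$.

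\smallskip

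The only step with any content is part (b); parts (a) and (c) are, respectively, a direct citation and an arithmetic rearrangement. Even in (b) all the estimates are available: the quantitative bound on $P_n(d,n-d)$ is already in hand via \eqref{appr2}, and tightness from (a) reduces the analysis to a window of bounded width around~$m$. So I expect no genuine obstacle, only careful bookkeeping to see that the $O(q^{-m+K})$ term is indeed negligible after the two-step choice of $K$ then $n$.
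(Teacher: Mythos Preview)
Your arguments for parts (b) and (c) are correct and match the paper's proof essentially verbatim: part (b) is tightness plus $P_n(m+a,m-a)\to 1$ for each fixed $a$, and part (c) is the telescoping identity $\dim\ker C_{2k_n+1}=D'_{0,k_n,n}+(n-X_{2k_n+1})$ combined with (b).

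There is, however, a genuine gap in your argument for part (a). Lemma~\ref{tight} does not let you choose the time threshold: given $\varepsilon>0$ it outputs \emph{both} a threshold $t'=t'(\varepsilon)$ and a radius $K=K(\varepsilon)$, and its conclusion only applies when $j\ge (q-1)q^m t'$. Your sentence ``for any fixed $t_0\in(0,t)$ we have $k_n\ge (q-1)q^m t_0$'' is true but irrelevant, because you cannot pick $t_0$ to be the $t'$ supplied by the lemma. If the limit $t$ is finite and smaller than $t'(\varepsilon)$, then $k_n<(q-1)q^m t'$ for all large $n$, and Lemma~\ref{tight} gives you nothing. (Looking at the proof of Lemma~\ref{tight}, the threshold $t'$ must be large enough that the hitting time to $m$ fits inside a window of that length with probability $\ge 1-\varepsilon/2$, so it really cannot be shrunk.)

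The paper handles this by splitting cases: for $t=\infty$ it uses Lemma~\ref{tight} exactly as you do, but for $0<t<\infty$ it invokes Lemma~\ref{criticalconvergence}, which shows that $X^{(n)}_{0,2k_n}-m$ converges in total variation to the fixed random variable $Z_{-\infty,t}$, and convergence in distribution to a tight limit gives tightness immediately. You need to add this case.
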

\begin{proof}
Part (a): If $t<\infty$, this follows from the fact that by Lemma~\ref{criticalconvergence}$, X^{(n)}_{0,2k_n}-m$ converges in total variation distance to $Z_{-\infty,t}$. For $t=\infty$, this follows from Lemma~\ref{tight}.

Part (b): Note that for any fixed $a$, we have
\[\lim_{n\to \infty} P_n(m+a,m-a)=1.\]
The statement follows by combining this with part (a).

Part (c): We have
\[\dim \ker C_{2k_n+1}=\dim \ker C_{2k_n}+n-X_{0,2k_n+1}=D'_{0,k_n,n}+n-X_{0,2k_n+1}^{(n)}.\]
Thus, on the high probability event $X^{(n)}_{0,2k_n+1}=n-X^{(n)}_{0,2k_n}$
\[\dim \ker C_{2k_n+1}=D'_{0,k_n,n}+(X_{0,2k_n}^{(n)}-m)+m.\qedhere\]
\end{proof}

Part~\eqref{thmdimkerC2k-1part2} of Theorem~\ref{thmdimkerC2k-1} follows by combining part (c) of Lemma~\ref{tightanddimkerC2k1} and Lemma~\ref{criticalconvergence}.

To prove part \eqref{thmdimkerC2k-1part3} of Theorem~\ref{thmdimkerC2k-1}, recall that in \eqref{GaussasD}, we proved that
\[\frac{D'_{0,k_n}-\mu_n t_n}{\sigma\sqrt{t_n}}\]
converges in distribution to a standard normal variable.

By part (a) of Lemma~\ref{tightanddimkerC2k1},
\[\frac{X_{0,2k_n}^{(n)}-m}{\sigma \sqrt{t_n}}\]
converges to $0$ in probability.

Thus,
\[\frac{D'_{0,k_n}+X_{0,2k_n}^{(n)}-m-\mu_n t_n}{\sigma\sqrt{t_n}}\]
converges in distribution to a standard normal variable. Combining this with part (c) of Lemma~\ref{tightanddimkerC2k1}, part~\eqref{thmdimkerC2k-1part3} of Theorem~\ref{thmdimkerC2k-1} follows.

\bibliography{references}
\bibliographystyle{plain}

\end{document}